\documentclass{article}
\usepackage{graphicx} 

\usepackage{xcolor, amsthm, amsmath, amssymb, amsfonts, enumerate}

\usepackage{xcolor}
\definecolor{nicelavender}{RGB}{153, 128, 250}
\definecolor{nicegreen}{RGB}{106, 176, 76}
\definecolor{random}{RGB}{60, 170, 200}
\definecolor{nicepurple}{RGB}{200, 90, 160}
\definecolor{darkpurple}{RGB}{164, 10, 106}
\definecolor{darkblue}{RGB}{0, 0, 100}
\newcommand\cece[1]{{\color{nicegreen}\textsc{Cece:} #1}}

\newcommand{\rnotes}[1]
{{\color{darkpurple}\textsc{Rebecca:} #1}}

\newcommand{\mc}[1]{\mathcal{#1}}
\newcommand{\mb}[1]{\mathbb{#1}}

\usepackage{thmtools} 
\usepackage{thm-restate}

\usepackage{amssymb}
\usepackage{amsmath}
\usepackage{bbm}

\usepackage{algorithm}
\usepackage{algorithmicx}

\usepackage{tikz}
\usetikzlibrary{calc}

\usepackage{todonotes}

\usepackage{hidde-standard-preamble}

\title{Almost perfect graph classes}

\author[1]{Cicely (Cece) Henderson}
\author[1]{Hidde Koerts}
\author[4]{Taite LaGrange}
\author[1]{Sophie Spirkl\thanks{We acknowledge the support of the Natural Sciences and Engineering Research Council of Canada (NSERC), [funding reference number RGPIN-2020-03912]. Cette recherche a \'et\'e financ\'ee par le Conseil de recherches en sciences naturelles et en g\'enie du Canada (CRSNG), [num\'ero de r\'ef\'erence RGPIN-2020-03912]. This project was funded in part by the Government of Ontario. This research was completed while Spirkl was an Alfred P. Sloan Fellow.  This research was undertaken, in part, thanks to
funding from the Canada Research Chairs Program.
}}
\author[2]{Agnès Totschnig}
\author[1]{Massimo Vicenzo}
\author[3]{Rebecca Whitman}
\affil[1]{Department of Combinatorics and Optimization, University of Waterloo}
\affil[2]{Department of Mathematics, Massachusetts Institute of Technology}
\affil[3]{Department of Mathematics,
University of California, Berkeley}
\affil[4]{David R. Cheriton School of Computer Science, University of Waterloo}

\begin{document}

\maketitle

\begin{abstract}
A graph $G$ is perfect if $\omega(H) = \chi(H)$ for each induced subgraph $H$ of $G$. In 2002, Chudnovsky, Robertson, Seymour, and Thomas famously proved the Strong Perfect Graph Theorem. Motivated by this forbidden induced subgraph characterization of the class of perfect graphs as well as the possible extension of efficient algorithms on perfect graphs, we consider the structure of graphs that are almost perfect. We say a graph is $c$-apex perfect if there is a constant $c$ number of vertices such that, upon the deletion of these vertices, what remains is a perfect graph. In this paper, we characterize the class of the sets of graphs $\mathcal{H}$ with $|\mathcal{H}|\leq 2$ for which there exists $c \in \mathbb{N}$ with the property that each $\mathcal{H}$-free graph is $c$-apex perfect. We also extend these results to several notable subclasses of perfect graphs, including chordal, interval, split, bipartite, and complete multipartite graphs.
\end{abstract}

\section{Background and Definitions}\label{sec:background}

The class of perfect graphs is one of the most fundamental classes in graph theory. A graph $G$ is \emph{perfect} if $\omega(H) = \chi(H)$ for each induced subgraph $H$ of $G$, where $\omega(H)$ is the \emph{clique number}, or size of a largest clique, of $H$, and $\chi(H)$ is the \emph{chromatic number} of $H$. Not only are many computational problems efficiently solvable on perfect graphs, but also perfect graphs themselves play a central role in structural graph theory. See for instance~\cite{golumbic2004algorithmic,grotschel1984polynomial,trotignon2013perfect}. In 1972, Lov\'{a}sz~\cite{lovasz1972characterization} famously provided one of the first major structural characterizations of perfect graphs using graph complements. The \emph{complement} of a graph $G$, denoted $\overline{G}$, is the graph with vertex set $V(\overline{G}):=V(G)$ and $uv\in E(\overline{G})$ if and only if $uv\notin E(G)$.

\begin{theorem}[Perfect Graph Theorem - Lov\'{a}sz~\cite{lovasz1972characterization}]\label{thm:WPGT}
    A graph $G$ is perfect if and only if $\overline{G}$ is perfect.
\end{theorem}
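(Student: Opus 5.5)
We follow Lovász's original route via vertex replication. Since $\overline{\overline{G}}=G$, it suffices to prove one direction: if $G$ is perfect then $\overline{G}$ is perfect. Because every induced subgraph of $\overline{G}$ is the complement of an induced subgraph of $G$, and induced subgraphs of perfect graphs are perfect, it is enough to show $\chi(\overline{G}) = \omega(\overline{G})$ for every perfect $G$. In terms of $G$ this means: $V(G)$ can be covered by $\alpha(G)$ cliques, where $\alpha(G)=\omega(\overline{G})$ is the stability number. We argue by induction on $|V(G)|$, so every proper induced subgraph $H$ of $G$ may be assumed to satisfy $\theta(H)=\alpha(H)$, where $\theta$ denotes the clique cover number.

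\textbf{Replication lemma.} If $G$ is perfect and $G'$ is obtained by replacing a vertex $v$ by two adjacent twins $v,v'$ (same neighbourhood, $vv'\in E$), then $G'$ is perfect. By induction we need only $\chi(G')=\omega(G')$. Let $\omega=\omega(G)$.
\begin{itemize}
\item If $v$ lies in a maximum clique of $G$, then $\omega(G')=\omega+1$. Colour $G$ with $\omega$ colours and give $v'$ a new colour.
\item Otherwise $\omega(G')=\omega$. Take an $\omega$-colouring of $G$ and let $A$ be the colour class of $v$. Every maximum clique of $G$ meets every colour class, and it avoids $v$, so it meets $A\setminus\{v\}$. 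Hence $\omega(G-(A\setminus\{v\}))\le \omega-1$, and by perfection this graph is $(\omega-1)$-colourable.
\item Now $(A\setminus\{v\})\cup\{v'\}$ is stable, since $v'$ has the same neighbours as $v$. Using it as one more class gives an $\omega$-colouring of $G'$.
\end{itemize}
Iterating, replacing each vertex $x$ by a clique of size $h(x)\ge 0$ preserves perfection; size $0$ means deletion, which is allowed because induced subgraphs of perfect graphs are perfect.

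\textbf{Main step (the real obstacle).} Suppose $G$ is perfect and, for contradiction, $\theta(G)>\alpha(G)$.
\begin{itemize}
\item \emph{Every stable set of size $\alpha$ misses some maximal clique.} Suppose instead that some clique $K$ meets every maximum stable set. Then $\alpha(G-K)=\alpha-1$, so by induction $G-K$ is covered by $\alpha-1$ cliques. Adding $K$ gives $\theta(G)\le\alpha$, a contradiction.
\item \emph{Build weights.} List the maximal cliques $K_1,\dots,K_t$ and, for each $i$, choose a maximum stable set $S_i$ disjoint from $K_i$. For each vertex $x$ set $h(x)=|\{i : x\in S_i\}|$, and replicate each $x$ by a clique of size $h(x)$ to obtain $H$. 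By the lemma, $H$ is perfect.
\item \emph{Count.} $|V(H)|=\sum_x h(x)=t\alpha$. A maximum clique of $H$ comes from some clique of $G$, which lies in a maximal clique $K_j$. Thus $\omega(H)\le\max_j \sum_{x\in K_j}h(x)=\max_j|\{i: S_i\cap K_j\neq\emptyset\}|\le t-1$, because $S_j\cap K_j=\emptyset$ and each $S_i$ meets a clique at most once. Also $\alpha(H)\le\alpha$, since a stable set of $H$ uses at most one copy of each vertex and projects to a stable set of $G$.
\item \emph{Contradiction.} We get $\chi(H)\ge |V(H)|/\alpha(H)\ge t$, while $\omega(H)\le t-1$, contradicting the perfection of $H$.
\end{itemize}
This proves $\theta(G)=\alpha(G)$. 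Applying it to all induced subgraphs gives that $\overline{G}$ is perfect. The delicate point is the weight construction and the bound $\omega(H)\le t-1$; everything else is routine.
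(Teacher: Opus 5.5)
Your proof is correct. It is the classical vertex-replication proof of Lovász's theorem: the replication lemma, then weighting each vertex by how many of the chosen maximum stable sets $S_i$ (with $S_i\cap K_i=\emptyset$) contain it. This gives $|V(H)|=t\alpha$, $\alpha(H)\le\alpha$ and $\omega(H)\le t-1$, which contradicts the perfection of $H$.

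The paper gives no proof of this statement. It cites Lovász and remarks that the result also follows as a corollary of \cref{thm:SPGT}. That deduction works because the complement of an odd hole $C_{2k+1}$ ($k\ge 2$) is an odd antihole and vice versa, so the class of graphs with neither is closed under complementation.

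The paper's route is a one-line deduction, but it rests on a vastly deeper theorem. Your route is elementary and self-contained. It only needs induction on $|V(G)|$, used in two places:
\begin{itemize}
\item in the replication lemma, where proper induced subgraphs of $G'$ containing at most one twin are isomorphic to induced subgraphs of $G$, and those containing both twins are replications of proper induced subgraphs of $G$;
\item in the main step, to cover $G-K$ by $\alpha-1$ cliques.
\end{itemize}

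One wording slip: the italic heading \emph{Every stable set of size $\alpha$ misses some maximal clique} has the quantifiers in the wrong order. What you actually prove, and what you need when choosing the $S_i$, is that every clique, in particular every maximal clique $K_i$, is disjoint from some maximum stable set.
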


In 1961, Berge~\cite{berge1961farbung} posited that the structure of perfect graphs also relates to the presence of induced subgraphs called holes and antiholes. For a graph $G$ and a set $S \subseteq V(G)$ of vertices, $G[S]$ denotes the subgraph of $G$ \emph{induced} by $S$. We say that $G$ \emph{contains} graph $H$ (as an induced subgraph) if there exists $S \subseteq V(G)$ such that $G[S] \cong H$, where $G \cong H$ denotes that $G$ is isomorphic to $H$. In particular, $G$ contains a \emph{hole} $H$ if $H$ isomorphic to a cycle with at least four vertices. An \emph{antihole} of $G$ is an induced subgraph of $G$ whose complement is isomorphic to a hole. We call $H$ an \emph{odd hole} or \emph{odd antihole}, respectively, if the associated cycle is of odd length. In a groundbreaking result from 2002, Chudnovsky, Robertson, Seymour, and Thomas~\cite{chudnovsky2006strong} confirmed Berge's longstanding conjecture by proving the Strong Perfect Graph Theorem (which also implies \cref{thm:WPGT} as a corollary). 

\begin{theorem}[Strong Perfect Graph Theorem - Chudnovsky, Robertson, Seymour, and Thomas~\cite{chudnovsky2006strong}]\label{thm:SPGT}
    A graph $G$ is perfect if and only if it contains neither odd holes nor odd antiholes.
\end{theorem}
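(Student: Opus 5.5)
The ``only if'' direction is elementary and I would do it first. Suppose $G$ is perfect. An odd hole $C_{2k+1}$ with $k\ge 2$ has $\omega=2$ but $\chi=3$, so it cannot be an induced subgraph of $G$. An odd antihole $\overline{C_{2k+1}}$ with $k \ge 2$ has $\omega = k$: cliques in it are stable sets of $C_{2k+1}$. Its colour classes are cliques of $C_{2k+1}$, which have at most $2$ vertices, so $\chi \ge \lceil (2k+1)/2\rceil = k+1$. Hence $G$ contains neither odd holes nor odd antiholes.

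The ``if'' direction is the deep part, and I would follow the decomposition strategy of Chudnovsky, Robertson, Seymour, and Thomas. Call a graph \emph{Berge} if it has no odd hole and no odd antihole. The plan is to prove a structure theorem: every Berge graph either is \emph{basic} or admits a \emph{decomposition}.

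The basic classes are:
\begin{itemize}
\item bipartite graphs and their complements;
\item line graphs of bipartite graphs and their complements;
\item double split graphs.
\end{itemize}

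The decompositions are:
\begin{itemize}
\item a proper $2$-join, in $G$ or in $\overline{G}$;
\item a homogeneous pair;
\item a balanced skew partition.
\end{itemize}

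Given this, I would take a minimal imperfect Berge graph $G$ and derive a contradiction as follows.
\begin{enumerate}
\item Each basic class is perfect. Bipartite graphs are immediate. Line graphs of bipartite graphs are perfect by K\H{o}nig's edge-colouring theorem. Double split graphs are checked directly. For the complemented classes I would invoke \cref{thm:WPGT}.
\item A minimal imperfect graph admits none of the decompositions. For $2$-joins this is the Cornu\'ejols--Cunningham argument. For homogeneous pairs it is Chv\'atal--Sbihi. For balanced skew partitions I would use a strengthening of Chv\'atal's star-cutset lemma, which needs the ``balanced'' parity condition together with the Berge hypothesis.
\end{enumerate}
Since the class of Berge graphs is closed under complementation and induced subgraphs, symmetric cases can be handled once.

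The main obstacle is the structure theorem itself. I would organize its proof by the presence of increasingly rich configurations. First handle Berge graphs containing a ``prism'' or its complement, and show these lead to line graphs of bipartite graphs or to $2$-joins. Next handle graphs containing a line graph of a bipartite subdivision of $K_4$. Then handle the ``wheels''. Finally treat the case with no such configurations, showing it yields a balanced skew partition or a bipartite or complemented structure.

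The central difficult tool is the theory of attachments to long paths and holes. The key result here is the Roussel--Rubio lemma on how anticonnected sets attach to an odd path. Parity arguments based on it force the decompositions. I expect this wheel and attachment analysis, where a balanced skew partition must be extracted, to be by far the longest and hardest step. Realistically, it spans well over a hundred pages in the original work.
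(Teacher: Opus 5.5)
Your proposal takes the same approach the paper relies on. The paper gives no proof of this theorem; it quotes it from Chudnovsky, Robertson, Seymour and Thomas (CRST), so the comparison is with their argument.

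Your ``only if'' direction is complete and correct. Your reduction of the ``if'' direction is exactly the CRST argument:
\begin{itemize}
\item the decomposition theorem, with the correct basic classes and decompositions (proper $2$-join in $G$ or $\overline{G}$, proper homogeneous pair, balanced skew partition);
\item perfection of the basic classes;
\item the fact that a minimum imperfect Berge graph admits none of the decompositions: Cornu\'ejols--Cunningham for $2$-joins, applied to $G$ and, since $\overline{G}$ is also minimally imperfect by Lov\'asz's theorem, to $\overline{G}$; Chv\'atal--Sbihi for homogeneous pairs; and CRST's own theorem for balanced skew partitions.
\end{itemize}
As a citation-level proof, this is all that can be asked.

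Your roadmap for the decomposition theorem itself, however, contains a step that fails as written, so it could not be followed literally.

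\textbf{The prism step is false as stated.} You claim that Berge graphs containing a prism are line graphs of bipartite graphs or admit $2$-joins. Let $H$ be the Heawood graph (cubic, bipartite, girth $6$).
\begin{itemize}
\item Every $6$-cycle of $H$ is induced, so $\overline{H}$ contains the triangular prism $\overline{C_6}$.
\item $H$ contains an induced $K_2+2K_1$, so $\overline{H}$ contains a diamond. Hence $\overline{H}$ is not the line graph of a bipartite graph.
\item A short edge-counting argument, using that $H$ is cubic of girth $6$, shows that neither $H$ nor $\overline{H}$ admits a proper $2$-join.
\end{itemize}
So $\overline{H}$ satisfies your hypothesis but neither of your outcomes; it is basic only because it is co-bipartite.

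\textbf{Your first two steps are in the wrong order.} The line graph of every bipartite subdivision of $K_4$ contains a prism. Some edge $ab$ of $K_4$ must be subdivided; the theta subgraph with branch vertices $a,b$ then has three paths of length at least $2$, and its line graph is an induced prism. So a prism step proved first would have to absorb the whole $K_4$ case. CRST go the other way: they first treat graphs $G$ such that $G$ or $\overline{G}$ contains the line graph of a bipartite subdivision of $K_4$. Only afterwards do they analyse \emph{long} prisms, under the hypothesis that no such configuration is present.

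\textbf{Double split graphs are never produced.} None of your steps outputs a double split graph, although it is one of your basic classes. In CRST they come from a separate case analysis built around the double diamond.

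None of this affects your top-level reduction, only the accuracy of the sketch.
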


Given the centrality of perfect graphs, it is natural to wonder about the structure of graphs that are \textit{almost} perfect. Others have considered variations of this question (for example, Lov\'{a}sz~\cite{lovasz1972characterization} made a first step towards the Strong Perfect Graph Theorem by considering ``minimally imperfect graphs''). Motivated by the possible extension of efficient algorithms on perfect graphs, a reasonable interpretation of an almost perfect graph is one that is a small number of vertices away from being perfect, that is, a graph for which there exists a constant $c$ number of vertices such that, upon the deletion of these vertices, what remains is a perfect graph. We call such a graph \emph{$c$-apex perfect}. More generally, for a graph property $\mathcal{P}$ and a constant $c \in \mathbb{N}$, we say that a graph $G$ is \emph{$c$-apex-$\mathcal{P}$} if there exists a set $S \subseteq V(G)$ of at most $c$ vertices  such that $G-S$ has property $\mathcal{P}$, where $G - S$ is the graph $G[V(G) \setminus S]$ obtained by removing the vertices in $S$. Similarly, we say a graph class $\mathcal{F}$ is \emph{$c$-apex-$\mathcal{P}$} if each graph $G \in \mathcal{F}$ is $c$-apex-$\mathcal{P}$. If there exists a $c$ such that a class $\mathcal{F}$ is $c$-apex-$\mathcal{P}$, we say that $\mathcal{F}$ is \emph{bounded-apex-$\mathcal{P}$}.

As alluded to above, being bounded-apex-$\mathcal{P}$ has direct algorithmic implications: many problems which are polynomial-time solvable on graphs with property $\mathcal{P}$ remain polynomial-time solvable for classes of bounded-apex-$\mathcal{P}$ graphs. Indeed, one may guess the bounded number of \emph{apices,} or vertices whose removal leaves a graph with property $\mathcal{P}$, as well as information about the solution's restriction to the apices, and efficiently solve the problem on the remainder of the graph. By extending said solution to the apices (and possibly locally modifying the solution), one can then efficiently find either optimal solutions or good approximations for many problems. As an example, Gr\"otschel, Lovasz, and Schrijver \cite{grotschel1984polynomial} gave a polynomial-time algorithm for finding a maximum-weight clique in a perfect graph, which yields an analogous algorithm for $c$-apex-perfect graphs as follows. Guess the set of apices, as well as the subset $X$ of them which appears in our solution. Delete all apices, as well as all vertices with a non-neighbour in $X$; then apply the algorithm from \cite{grotschel1984polynomial} to the remaining graph. Return the union of its output and $X$. 

This meaning of ``apices'' originated in the context of graph minors, as, for instance, in Marx and Schlotter's~\cite{marx2012obtaining} study of planar graphs and Sau, Stamoulis, and Thilikos'~\cite{sau2023k} study of minor closed classes. More recently, this notion has also been considered for classes defined by forbidden induced subgraphs. For example, in~\cite{singh2023apex} Singh, Sivaraman, and Zaslavsky consider the class of 1-apex-cographs (cographs are graphs with no induced four-vertex path). Similarly, in~\cite{borowiecki2018p} Borowiecki, Drgas-Burchardt, and Sidorowicz consider the order of forbidden induced subgraphs of $c$-apex-$\mathcal{P}$ for an arbitrary property $\mathcal{P}$ such that the class is \emph{hereditary}, that is, closed under taking subgraphs. The study of bounded-apex-perfect graphs in particular is natural because of the forbidden induced subgraph characterization of the class of perfect graphs provided by~\cref{thm:SPGT}. 

To that end, we require the following definitions. If graph $G$ does not contain graph $H$, we say that $G$ is \emph{$H$-free}. Similarly, for a set $\mathcal{H}$ of graphs, $G$ is \emph{$\mathcal{H}$-free} if $G$ does not contain any graph in $\mathcal{H}$.

In this paper, we consider the following question, first for perfect graphs and then more generally. 

\begin{question}
\label{question:broadquestion}
    Given a property $\mathcal{P}$, for which sets of graphs $\mathcal{H}$ is the class of $\mathcal{H}$-free graphs bounded-apex-$\mathcal{P}$?
\end{question}

Given the breadth of~\cref{question:broadquestion}, we initiate its study by considering a similar question where $|\mathcal{H}|$ is bounded:

\begin{question}
\label{question:mainquestion}
    Given a property $\mathcal{P}$, for which sets of graphs $\mathcal{H}$ with $|\mathcal{H}| \le 2$ is the class of $\mathcal{H}$-free graphs bounded-apex-$\mathcal{P}$?
\end{question}

Our main result answers \cref{question:mainquestion} for perfect graphs, as follows. 
Note that for graphs $G$ and $H$ and a non-negative integer $r$, $G+H$ denotes the graph isomorphic to the disjoint union of $G$ and $H$; $rH$ denotes the graph isomorphic to the disjoint union of $r$ copies of $H$; and $K_r$, $C_r$ and $P_r$ denote a complete graph, a cycle, and a path on $r$ vertices, respectively.

\begin{restatable}{theorem}{main}\label{thm:main}
    Let $H_1, H_2$ be graphs. There exists a constant $c \in \mathbb{N}$ such that the class of $\{H_1, H_2\}$-free graphs is $c$-apex-perfect if and only if -- up to switching the labels of $H_1$ and $H_2$, and taking complements of both -- one of the following holds: 
    \begin{enumerate}
        \item $H_1$ is an induced subgraph of $P_4$;  \label{case:p4}
        \item $H_1$ is an induced subgraph of $\overline{P_3+K_1}$ and $H_2$ is an induced subgraph of $P_3+rK_1$ for some $r \in \mathbb{N}$;   \label{case:triangle}
        \item $H_1$ is a complete graph and $H_2$ is edgeless; \label{case:ramsey}
        \item $H_1$ is an induced subgraph of $\overline{K_2 + 2K_1}$ and $H_2$ is an induced subgraph of $K_2 + 2K_1$; or \label{case:p3+p1}
        \item $H_1 \cong C_4$ and $H_2 \cong 2K_2$, in which case $c = 1$. \label{case:c42p2}
    \end{enumerate}
\end{restatable}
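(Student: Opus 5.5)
The proof has two directions. For sufficiency we show that each listed pair forces bounded-apex-perfection. For necessity we exhibit families of $\{H_1,H_2\}$-free graphs that need arbitrarily many deletions to become perfect. Both directions use \cref{thm:SPGT} and the complement symmetry of \cref{thm:WPGT}: $G$ is $c$-apex-perfect iff $\overline{G}$ is. So it suffices to handle each case up to complementation.

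\textbf{Necessity.} The key witnesses are disjoint unions $kC_5$ (equivalently, their complements) and $kC_{2\ell+1}$ for large odd cycles. Since $C_5$ is self-complementary and any deletion set of size less than $k$ leaves a $C_5$ intact, $kC_5$ is not $(k-1)$-apex-perfect. The same holds for $kC_{2\ell+1}$ and $kC_{2\ell+1}$'s complement. I will also use joins and complements of these, together with other families of vertex-disjoint odd holes or antiholes, such as disjoint odd holes fully joined to one another, or the complement of the line graph of a large complete graph.

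For each candidate pair $\{H_1,H_2\}$ outside the list, I will find such a family avoiding both graphs.
\begin{enumerate}
\item If neither $H_i$ is an induced subgraph of every member of some family, we are done. For example, $kC_{2\ell+1}$ with $\ell$ large is $\{H_1,H_2\}$-free unless some $H_i$ is a linear forest. Its complement is free unless some $H_i$ is a co-linear forest.
\item Next, the join of $k$ copies of $C_5$ forces some $H_i$ to be an induced subgraph of a join of $C_5$'s. Its complement forces an induced subgraph of a disjoint union of $C_5$'s.
\end{enumerate}
Intersecting these constraints should cut the pair down, case by case, to the listed possibilities. The tight cases use extra constructions. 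To rule out $P_3+rK_1$ paired with something larger than $\overline{P_3+K_1}$, I will use triangle-free graphs with many disjoint $C_5$'s and bounded independence patterns, such as blowups of $C_5$ or Mycielski-type graphs. For case \ref{case:c42p2}, I will check that enlarging either graph fails, for example via $kC_5$ together with a pentagon-blowup.

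\textbf{Sufficiency.}
\begin{enumerate}
\item Case \ref{case:p4}: $P_4$-free graphs (cographs) contain no hole of length at least $5$ and no antihole of length at least $5$, so $c=0$.
\item Case \ref{case:ramsey}: Ramsey's theorem bounds $|V(G)|$, so deleting all vertices works.
\item Case \ref{case:triangle}: graphs that are $\overline{P_3+K_1}$-free (paw-free, or smaller) and $P_3+rK_1$-free. By Olariu's theorem each component of a paw-free graph is triangle-free or complete multipartite. Complete multipartite components are perfect. In a triangle-free component, the $P_3+rK_1$-freeness bounds $\alpha$ outside the neighbourhood of any induced $P_3$. Hence if the graph has an odd hole, that hole's closed neighbourhood, of bounded size, dominates everything except a set of bounded independence number. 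Triangle-free plus bounded $\alpha$ gives bounded size. Taking many disjoint odd holes in different components would create $P_3+rK_1$, so only boundedly many components are imperfect, and each is of bounded size after the argument.
\item Case \ref{case:p3+p1}: for $\{K_2+2K_1,\overline{K_2+2K_1}\}$-free graphs, a similar argument. Any odd hole or antihole here has bounded length. If there are many vertex-disjoint ones, two nonadjacent such $C_5$'s yield $K_2+2K_1$ or its complement. Hence a bounded hitting set exists, obtained by taking a maximal disjoint packing.
\item Case \ref{case:c42p2}: $\{C_4,2K_2\}$-free graphs with an odd hole contain only $C_5$ as an imperfect subgraph. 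Such graphs are pseudo-split: $V$ partitions into a clique, a stable set and a $C_5$ fully complete to the clique and anticomplete to the stable set. Deleting one vertex of the $C_5$ leaves a split graph, so $c=1$.
\end{enumerate}

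The main obstacle is the necessity direction's exhaustive case analysis, in particular showing that pairs just beyond the boundary of cases \ref{case:triangle} and \ref{case:p3+p1} admit unbounded constructions. I would organize this first by which of $H_1,H_2$ is a linear forest versus its complement, using the $kC_{2\ell+1}$ families to get that split.
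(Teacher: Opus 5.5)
Your proposal has genuine gaps in both directions. The most serious is in necessity, at the boundary of Cases~2 and~4, where the constructions you propose provably cannot work.

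\textbf{Necessity.} Every triangle-free $(P_3+rK_1)$-free graph lies in the class of Case~2 with $H_1=K_3$, and that class is bounded-apex-perfect. So no triangle-free family (stable-set blowups of $C_5$, Mycielski-type graphs) can certify failure against $H_2=P_3+rK_1$. Moreover, ``triangle-free with bounded independence'' already forces bounded order by Ramsey.

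Concretely, the pairs $\{K_4,K_2+2K_1\}$ and $\{K_4^-,P_3+2K_1\}$ are not on the list, yet every family you name contains a graph from each pair. Joins of odd holes or antiholes and clique-blowups of $C_5$ contain $K_4$ and $K_4^-$. Disjoint unions of odd holes and stable-set blowups of $C_5$ contain $K_2+2K_1$ and $P_3+2K_1$.

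The missing idea is a family that is $K_4^-$-free, has stability number at most $3$, and still contains arbitrarily many disjoint induced $C_5$'s. The paper takes the clique-blowup of $C_5$ on cliques $V_1,\dots,V_5$ in which $V_1V_5$ and $V_3V_4$ are complete, but $V_1V_2$, $V_2V_3$, $V_4V_5$ are joined only by perfect matchings.
\begin{enumerate}
\item The three cliques $V_1\cup V_5$, $V_2$, $V_3\cup V_4$ cover the vertex set, so $\alpha\le 3$.
\item The matching edges lie in no triangle, and the remaining edges form disjoint cliques, so the graph is $K_4^-$-free.
\end{enumerate}
This family forces $H_2$ to be an induced subgraph of $P_6$, $P_4+K_2$ or $3K_2$, unless $H_1$ is complete or $\overline{H_1}$ is an induced subgraph of $P_5$. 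Together with its complement, it is what cuts Case~2 down to the paw and isolates Case~4.

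\textbf{Sufficiency.} Cases~1, 3 and~5 are fine, but Cases~2 and~4 have problems.

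In Case~2, the neighbourhood of a short odd hole is not of bounded size, and an imperfect triangle-free component need not have bounded order. For example, blow up one vertex of $C_5$ into a stable set of size $m$. The result is connected, triangle-free, contains an induced $C_5$, and has $m+4$ vertices. It is also $(P_3+2K_1)$-free, because every induced $P_3$ contains one of the two vertices complete to the stable set, and that vertex's common non-neighbours with the rest of the $P_3$ lie in an edge. What you actually need is a bounded hitting set for odd holes in $\{K_3,P_3+rK_1\}$-free graphs. The paper obtains it as follows:
\begin{enumerate}
\item Take a maximum induced $P_3$-free subgraph $A$; it is a matching $M$ plus a stable set $S$.
\item Show every vertex outside $A$ has fewer than $r$ non-neighbours in $S$.
\item Show that if $|E(M)|>r$ then $G$ has maximum degree~$1$.
\item Show that $V(G)\setminus A$ is stable once $|V(G)|\ge R(3,4r-1)$, so $G-V(M)$ is bipartite.
\end{enumerate}

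In Case~4, odd holes and antiholes do have bounded length; in fact only $C_5$ occurs. But ``many vertex-disjoint $C_5$'s give two nonadjacent ones'' does not follow, since disjoint copies can have arbitrary mixed adjacency between them. The paper instead shows that a $\{K_4^-,\overline{K_4^-}\}$-free graph with a stable set of size $4$ is a complete bipartite graph minus a matching, or a complete bipartite graph plus an isolated vertex. By complementation, every imperfect member then has $\alpha,\omega\le 3$, hence fewer than $R(4,4)$ vertices.
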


Many important graph families are subclasses of perfect graphs. Thus, in addition to~\cref{thm:main}, we also answer~\cref{question:mainquestion} for several notable subclasses of perfect graphs, including chordal graphs, interval graphs, split graphs, bipartite graphs, and complete multipartite graphs. The families under study were selected on the basis of their importance in structural and algorithmic graph theory (see \cite{chordal, interval, split, bipartite}). The relevant definitions for each of these families are included in their respective sections.

\subsection{Outline of paper}
We answer \cref{question:mainquestion} for perfect graphs in Sections \ref{sec:bounded_properties} to \ref{sec:main_proof}. First, we begin in \cref{sec:bounded_properties} by discussing necessary conditions on a set $\mathcal{H}$ of graphs for the $\mathcal{H}$-free graphs to be bounded-apex-$\mathcal{P}$, where $\mathcal{P}$ represents being perfect or inclusion in a subclass of perfect graphs. We next consider the characterization of $\mathcal{H}$ where $\mathcal{P}$ represents being perfect. The theorem is stated here and Sections \ref{sec:imperfect_families} to \ref{sec:main_proof} are dedicated to its proof. Specifically, in \cref{sec:imperfect_families} we establish a number of infinite families of graphs that are never bounded-apex-perfect. In \cref{sec:successful_pairs} we show that the given families $\mathcal{H}$ of forbidden induced subgraphs do each imply that the class of $\mathcal{H}$-free graphs are bounded-apex-perfect. We complete the proof of \cref{thm:main} in \cref{sec:main_proof} by demonstrating that no other sets $\mathcal{H}$ of graphs have the desired property.

The remainder of the paper is dedicated to characterizing when forbidding one or two graphs produces a family of bounded-apex-chordal, bounded-apex-interval, bounded-apex-split, bounded-apex-bipartite, or bounded-apex-complete-multipartite graphs. Since each of these families is a subset of the bounded-apex-perfect graphs, we are able to use the machinery established for our proof of \cref{thm:main} as a jumping-off point in the subsequent arguments. 

\subsection{Additional Definitions and Notation}\label{sec:notation}
All graphs considered in this paper are simple and finite. We use $[k]$ to denote the set $\{1, \dots, k\}$. For any two subsets $A,B \subseteq V(G)$, we say that $A$ is \emph{complete} to $B$ if all vertices of $A$ are adjacent to all vertices of $B$. Similarly, we say that $A$ and $B$ are \emph{anti-complete} if there does not exist an edge in $G$ with one endpoint in $A$ and the other endpoint in $B$.

A \emph{clique} in a graph $G$ is a vertex set $S \subseteq V(G)$ such that $G[S]$ is a complete graph.  
A \emph{stable set} in a graph $G$ is a vertex set $S \subseteq V(G)$ such that $G[S]$ is edgeless.
The \emph{stability number} of a graph $G$, denoted by $\alpha(G)$, is the size of a largest stable set in $G$.

Here we introduce several other important graph classes used throughout the paper. 
A \emph{linear forest} is a graph $F$ in which each component of $F$ is a path. A \emph{split graph} is a graph with vertex set $K\cup S$ where the subgraph induced by $K$ is a clique, and the subgraph induced by $S$ is a stable set. Lastly, a graph $G$ is \emph{complete multipartite} if $V(G)$ can be partitioned into sets $V_1,\dots,V_k$, for $k\geq1$ such that for all $u\in V_i$, $v\in V_j$, we have that $uv\in E(G)$ if and only if $i\neq j$. 
A \emph{diamond}, denoted $K_4^-$, is $K_4$ with one edge removed.

\section{Necessary conditions for bounded-apex-perfect classes}
\label{sec:bounded_properties}

In this section, we elucidate a number of properties of bounded-apex-perfect graph classes and consider necessary conditions on a set of graphs $\mathcal{H}$ for the class of $\mathcal{H}$-free graphs to be bounded-apex-perfect. We first observe that since the class of perfect graphs is closed under complementation, so too is the class of $c$-apex-perfect graphs. We record this as a lemma. 

\begin{lemma} \label{lem:complement}
    Let $\mathcal{C}$ be a class of graphs and let $c \in \mathbb{N}$. If $\mathcal{C}$ is $c$-apex-perfect, then so is $\{\overline{G} : G \in \mathcal{C}\}$. 
\end{lemma}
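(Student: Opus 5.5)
The plan is to work directly from the definitions and use the Perfect Graph Theorem (\cref{thm:WPGT}) as the only real input. Fix $G \in \mathcal{C}$ and consider $\overline{G}$; every member of $\{\overline{G} : G \in \mathcal{C}\}$ has this form. Since $\mathcal{C}$ is $c$-apex-perfect, there is a set $S \subseteq V(G)$ with $|S| \le c$ such that $G - S$ is perfect. We will show that the same set $S$, viewed as a subset of $V(\overline{G}) = V(G)$, witnesses that $\overline{G}$ is $c$-apex-perfect.

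The key step is the identity $\overline{G} - S = \overline{G - S}$. Both graphs have vertex set $V(G) \setminus S$. For distinct $u, v \notin S$, we have $uv \in E(\overline{G} - S)$ if and only if $uv \notin E(G)$. Because $G - S$ is an induced subgraph of $G$, this holds if and only if $uv \notin E(G - S)$, that is, if and only if $uv \in E(\overline{G-S})$. In short, complementation commutes with taking induced subgraphs.

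Applying \cref{thm:WPGT} to the perfect graph $G - S$ then shows that $\overline{G - S} = \overline{G} - S$ is perfect. Since $|S| \le c$, the graph $\overline{G}$ is $c$-apex-perfect. As $G \in \mathcal{C}$ was arbitrary, the class $\{\overline{G} : G \in \mathcal{C}\}$ is $c$-apex-perfect.

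There is no genuine obstacle. The only point needing care is the commutation of complement with vertex deletion, which holds precisely because deleting vertices yields induced subgraphs, and the deep content is supplied entirely by \cref{thm:WPGT}.
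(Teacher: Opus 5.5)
Your proof is correct and follows the paper's argument: you reuse the same deletion set $S$ and apply the Perfect Graph Theorem to $G - S$. You also verify the identity $\overline{G} - S = \overline{G - S}$ explicitly, which the paper uses without comment.
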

\begin{proof}
    Let $\mathcal{C}$ be a $c$-apex-perfect class of graphs and let $G$ be a graph in $\mathcal{C}$. Then there is some set $S \subset V(G)$ with $|S| \leq c$ such that the graph $G - S$ is perfect. Since the complement of a perfect graph is also perfect by \cref{thm:WPGT}, $\overline{G} - S$ is perfect. Hence $\overline{G}$ is $c$-apex-perfect, for all $ G \in \mathcal{C}$, as desired.
\end{proof}

Next, we show that linear forests are the critical structure for forbidden induced subgraphs of bounded-apex-perfect graphs. (Note that Lemma \ref{lem:LinForest} does not hold for infinite $\mathcal{H}$, as shown by Theorem \ref{thm:SPGT}.)

\begin{lemma}\label{lem:LinForest}
    Let $\mathcal{H}$ be a finite set of graphs.  If the class of $\mathcal{H}$-free graphs is bounded-apex-perfect then some graph in $\mathcal{H}$ is a linear forest, and some graph in $\mathcal{H}$ is the complement of a linear forest. 
\end{lemma}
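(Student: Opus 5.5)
We argue by contrapositive: assuming no graph in $\mathcal{H}$ is a linear forest, we build, for every $c$, an $\mathcal{H}$-free graph that is not $c$-apex-perfect. By \cref{lem:complement}, this also settles the complement statement. If no graph in $\mathcal{H}$ is the complement of a linear forest, then $\overline{\mathcal{H}}:=\{\overline{H}: H\in\mathcal{H}\}$ contains no linear forest. So the $\overline{\mathcal{H}}$-free graphs are not bounded-apex-perfect. Since $G$ is $\mathcal{H}$-free exactly when $\overline{G}$ is $\overline{\mathcal{H}}$-free, \cref{lem:complement} transfers this back to the $\mathcal{H}$-free graphs.

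The construction is a disjoint union of long odd holes. Let $m = 1+\max_{H\in\mathcal{H}}|V(H)|$, which is finite since $\mathcal{H}$ is finite. Choose an odd integer $\ell \ge \max(m,5)$ and let $G_c = (c+1)C_\ell$.

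The first step is to show $G_c$ is $\mathcal{H}$-free. Let $S\subseteq V(G_c)$ with $|S| \le m-1 < \ell$. Then $S$ meets each cycle in a proper subset of its vertices. A proper induced subgraph of a cycle is a disjoint union of paths, so $G_c[S]$ is a linear forest. Every $H\in\mathcal{H}$ has at most $m-1$ vertices, and by assumption none is a linear forest, so no $H\in\mathcal{H}$ is induced in $G_c$.

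The second step is to show $G_c$ is not $c$-apex-perfect. Any set of at most $c$ vertices misses at least one of the $c+1$ vertex-disjoint cycles. That cycle survives in the remaining graph as an odd hole of length $\ell\ge 5$. By \cref{thm:SPGT}, or more simply because $\chi(C_\ell)=3>2=\omega(C_\ell)$, the remaining graph is imperfect.

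Since $c$ was arbitrary, the $\mathcal{H}$-free graphs are not bounded-apex-perfect, which proves the contrapositive. The argument is routine throughout; the only point needing care is taking $\ell$ larger than every graph in $\mathcal{H}$, which is exactly where finiteness of $\mathcal{H}$ is used. This matches the paper's remark that the lemma fails for infinite $\mathcal{H}$, such as the set of all odd holes and antiholes.
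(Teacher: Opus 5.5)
Your proof is correct and follows the paper's argument: the same graph $(c+1)C_\ell$ with $\ell$ odd, $\ell \ge 5$, and $\ell$ larger than every $|V(H)|$, $H \in \mathcal{H}$. For the complement half, the paper uses $\overline{(c+1)C_\ell}$ directly, which is equivalent to your route through $\overline{\mathcal{H}}$ and Lemma~\ref{lem:complement}.
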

\begin{proof}
    Assume for the sake of contradiction that there exists a constant $c \in \mathbb{N}$ such that the class of $\mathcal{H}$-free graphs is $c$-apex-perfect, but that no graph in $\mathcal{H}$ is a linear forest. Let $\ell$ be an odd integer with $\ell > \max_{H\in \mathcal{H}} |V(H)|$ and $\ell \geq 5$. Consider the graph $(c+1)C_{\ell}$. By the definition of $\ell$, it follows that every graph in $\mathcal{H}$ is $C_{\ell}$-free. Since each $C_{\ell}$-free induced subgraph of $(c+1)C_{\ell}$ is a linear forest, it follows that $(c+1)C_{\ell}$ is $\mathcal{H}$-free. 
    However, $(c+1)C_{\ell}$ is not $c$-apex-perfect, since deleting any $c$ vertices preserves at least one copy of $C_{\ell}$. 
    This contradicts the assumption that the class of $\mathcal{H}$-free graphs is $c$-apex-perfect. By this contradiction, we conclude that some graph in $\mathcal{H}$ is a linear forest. Similarly, by considering $\overline{(c+1)C_{\ell}}$ we conclude that some graph in $\mathcal{H}$ is the complement of a linear forest. 
\end{proof}

It turns out that for the case of forbidding just one graph $H$, the necessary conditions given by \cref{lem:LinForest} are sufficient. Namely, for a class of $H$-free graphs to be bounded-apex-perfect, $H$ must be both a linear forest and the complement of a linear forest. As shown in the next lemma, such graphs are precisely the induced subgraphs of $P_4$.

\begin{lemma} \label{lem:p4lin}
    If $H$ is both a linear forest and the complement of a linear forest, then $H$ is an induced subgraph of $P_4$. 
\end{lemma}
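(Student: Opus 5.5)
We prove the statement by bounding the number of vertices of $H$ and then checking the few remaining cases by hand. Both $H$ and $\overline{H}$ are linear forests, so every vertex has degree at most $2$ in $H$ and in $\overline{H}$. Writing $n = |V(H)|$, each vertex satisfies $\deg_H(v) + \deg_{\overline{H}}(v) = n-1$, which gives $n - 1 \le 4$, i.e.\ $n \le 5$. So the first step is to reduce to graphs on at most five vertices. We then rule out $n = 5$ and classify $n \le 4$.

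\emph{The case $n=5$.} Every vertex must have degree exactly $2$ in $H$, so $H$ is $2$-regular on five vertices, hence $H \cong C_5$. But $C_5$ is not a linear forest, a contradiction. (Alternatively, $\overline{H}$ would also be $2$-regular, hence $C_5$, which is not a linear forest either.) So $n \le 4$.

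\emph{The case $n \le 4$.} We want every such $H$ to be an induced subgraph of $P_4$. Linear forests contain no cycles, so neither $H$ nor $\overline{H}$ contains a triangle, and neither contains $K_{1,3}$ or $C_4$.
\begin{itemize}
\item For $n \le 2$ and $n = 3$: a linear forest on at most three vertices whose complement is also a linear forest must be triangle-free with triangle-free complement. For $n=3$ this leaves $P_3$ or $K_2+K_1$. Both are induced subgraphs of $P_4$, as are $K_1$, $K_2$ and $2K_1$.
\item For $n=4$: the linear forests on four vertices are $4K_1$, $K_2+2K_1$, $2K_2$, $P_3+K_1$ and $P_4$. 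Their complements are respectively $K_4$, $\overline{K_2+2K_1}$ (the diamond $K_4^-$, which contains a triangle), $C_4$, the paw (which contains a triangle), and $P_4$. Only $P_4$ survives.
\end{itemize}

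The only step needing care is this last enumeration of four-vertex linear forests and their complements. It is a finite check, so I expect no real obstacle.
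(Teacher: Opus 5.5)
Your proof is correct, but it takes a different route from the paper's.

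The paper uses the fact that one of $H$ and $\overline{H}$ is connected. Both the hypothesis and the conclusion are invariant under complementation, since $P_4$ is self-complementary. So it may assume $H$ is connected, hence a path. It then gets $|V(H)| \le 4$ because the complement of $P_n$ contains a triangle for $n \ge 5$. There is no case analysis at all.

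You instead bound $n$ by the degree count $\deg_H(v) + \deg_{\overline{H}}(v) = n-1 \le 4$. You rule out $n=5$ by $2$-regularity, and finish by enumerating the linear forests on at most four vertices and their complements. Your enumeration is complete and correct: only $P_4$ survives at $n=4$, and only $P_3$ and $K_2+K_1$ at $n=3$.

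What each buys: your argument is more elementary and does not rely on the connectivity of $H$ or $\overline{H}$. Your degree bound also generalizes directly: if both $H$ and $\overline{H}$ have maximum degree at most $d$, then $n \le 2d+1$. The paper's argument is shorter and avoids the finite check entirely by reducing immediately to paths.
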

\begin{proof}
    By possibly replacing $H$ by $\overline{H}$, we may assume that $H$ is connected. Every component of a linear forest is a path, so $H$ is a path. If $|V(H)| \geq 5$, then its complement contains $K_3$ and so $H$ is not the complement of a linear forest. Hence $H$ is a path on at most $4$ vertices. Finally, the result follows from the fact $P_4$ is self-complementary, and hence any complement of an induced subgraph of $P_4$ is also an induced subgraph of $P_4$.
\end{proof}

This gives us the desired characterization for forbidding a single graph.

\begin{theorem}\label{thm:ExcludingOne}
        Let $H$ be a graph. The class of $H$-free graphs is bounded-apex-perfect if and only if $H$ is an induced subgraph of $P_4$.
\end{theorem}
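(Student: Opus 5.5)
The proof has two directions, and both are short given the lemmas already established.

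For necessity, suppose the class of $H$-free graphs is bounded-apex-perfect. We apply \cref{lem:LinForest} with the finite set $\mathcal{H} = \{H\}$. Since $\mathcal{H}$ has only one member, that lemma forces $H$ itself to be a linear forest. It also forces $H$ to be the complement of a linear forest. Then \cref{lem:p4lin} shows that $H$ is an induced subgraph of $P_4$.

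For sufficiency, suppose $H$ is an induced subgraph of $P_4$. Every $H$-free graph is then $P_4$-free, so it suffices to show that $P_4$-free graphs are perfect. In that case the class is $0$-apex-perfect, and hence bounded-apex-perfect.

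To show $P_4$-free graphs are perfect, I would use \cref{thm:SPGT}. It is enough to check two things:
\begin{itemize}
\item An odd hole $C_\ell$ with $\ell \ge 5$ contains an induced $P_4$: take any four consecutive vertices.
\item An odd antihole $\overline{C_\ell}$ with $\ell \ge 5$ contains $\overline{P_4} \cong P_4$, obtained by complementing the $P_4$ inside $C_\ell$.
\end{itemize}
So a $P_4$-free graph has no odd hole and no odd antihole, and is therefore perfect. Alternatively, one could cite Seinsche's classical theorem that cographs are perfect, but the argument via \cref{thm:SPGT} is self-contained within the paper.

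None of these steps is a real obstacle. The only points needing care are, first, that \cref{lem:LinForest} applies to a single graph because both of its conclusions must then refer to that same graph, and second, that the constant $c=0$ is permitted in the definition of bounded-apex, since $0 \in \mathbb{N}$ in the paper's usage. If $0$ were excluded, taking $c=1$ would work equally well, because deleting a vertex preserves perfection.
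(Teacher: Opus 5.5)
Your proposal is correct and follows the paper's proof essentially step for step. For necessity, you apply \cref{lem:LinForest} to $\{H\}$ and then \cref{lem:p4lin}. For sufficiency, you observe that $P_4$ is an induced subgraph of every odd hole and odd antihole, so $P_4$-free graphs are perfect by \cref{thm:SPGT}.
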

\begin{proof}
    Let $H$ be a graph such that the class of $H$-free graphs is bounded-apex-perfect. By \cref{lem:LinForest}, $H$ is both a linear forest and the complement of a linear forest, which, by \cref{lem:p4lin}, implies that $H$ is an induced subgraph of $P_4$. 

    Conversely, note that $P_4$ is an induced subgraph of all odd holes and all odd antiholes. Hence, $P_4$-free graphs are perfect, proving the reverse implication.
\end{proof}

Over the next three sections we will consider the case of graph classes defined by two forbidden induced subgraphs. 



\section{Constructions of families that are not bounded-apex-perfect}
\label{sec:imperfect_families}
    
In this section, we consider three infinite families of graphs, shown in Figure \ref{fig:three_families}, that are not bounded-apex-perfect. The basic building blocks for the graphs in all three families are copies of $C_5$, since $C_5$ is the smallest obstruction to a graph being perfect. For each family, we prove that it is not bounded-apex-perfect and characterize the linear forests induced in its graphs and their complements. 

\begin{figure}
    \centering

    \begin{tikzpicture}[
    dot/.style={circle,fill=black,inner sep=1.8pt},
    every node/.style={font=\small}
]

\begin{scope}[shift={(0,-0.4)}]

    \begin{scope}[shift={(0.0,1.5)}]
        \coordinate (L11) at (0,0.65);
        \coordinate (L12) at (0.60,0.20);
        \coordinate (L13) at (0.35,-0.55);
        \coordinate (L14) at (-0.35,-0.55);
        \coordinate (L15) at (-0.60,0.20);
        \draw (L11)--(L12)--(L13)--(L14)--(L15)--cycle;
    \end{scope}

    \begin{scope}[shift={(2.2,1.5)}]
        \coordinate (L21) at (0,0.65);
        \coordinate (L22) at (0.60,0.20);
        \coordinate (L23) at (0.35,-0.55);
        \coordinate (L24) at (-0.35,-0.55);
        \coordinate (L25) at (-0.60,0.20);
        \draw (L21)--(L22)--(L23)--(L24)--(L25)--cycle;
    \end{scope}

    \begin{scope}[shift={(0.0,-0.8)}]
        \coordinate (L31) at (0,0.65);
        \coordinate (L32) at (0.60,0.20);
        \coordinate (L33) at (0.35,-0.55);
        \coordinate (L34) at (-0.35,-0.55);
        \coordinate (L35) at (-0.60,0.20);
        \draw (L31)--(L32)--(L33)--(L34)--(L35)--cycle;
    \end{scope}

    \begin{scope}[shift={(2.2,-0.8)}]
        \coordinate (L41) at (0,0.65);
        \coordinate (L42) at (0.60,0.20);
        \coordinate (L43) at (0.35,-0.55);
        \coordinate (L44) at (-0.35,-0.55);
        \coordinate (L45) at (-0.60,0.20);
        \draw (L41)--(L42)--(L43)--(L44)--(L45)--cycle;
    \end{scope}

    \foreach \v in {11,12,13,14,15,21,22,23,24,25,31,32,33,34,35,41,42,43,44,45}{
        \node[dot] at (L\v) {};
    }

    \node[align=center] at (1.1,-2.4) {Disjoint copies of $C_5$\\ of \cref{lem:c5s}};
\end{scope}

\begin{scope}[shift={(6.5,0)},scale=0.9]

    \coordinate (M1-1) at (-0.28,1.98);
    \coordinate (M1-2) at ( 0.28,1.98);
    \coordinate (M1-3) at ( 0.00,2.48);

    \coordinate (M2-1) at (1.82,0.48);
    \coordinate (M2-2) at (2.38,0.48);
    \coordinate (M2-3) at (2.10,0.98);

    \begin{scope}[shift={(1.30,-1.90)},rotate=-18]
        \coordinate (M3-2) at (-0.28,-0.1617);
        \coordinate (M3-1) at ( 0.28,-0.1617);
        \coordinate (M3-3) at ( 0.00, 0.3233);
    \end{scope}

    \begin{scope}[shift={(-1.30,-1.90)},rotate=18]
        \coordinate (M4-1) at (-0.28,-0.1617);
        \coordinate (M4-2) at ( 0.28,-0.1617);
        \coordinate (M4-3) at ( 0.00, 0.3233);
    \end{scope}
    
    \coordinate (M5-1) at (-2.38,0.48);
    \coordinate (M5-2) at (-1.82,0.48);
    \coordinate (M5-3) at (-2.10,0.98);

    \foreach \a/\b in {2/3,4/5}{
        \foreach \u in {1,2,3}{
            \foreach \v in {1,2,3}{
                \draw[gray] (M\a-\u)--(M\b-\v);
            }
        }
    }

    \foreach \a/\b in {1/2,3/4,5/1}{
        \foreach \v in {1,2,3}{
            \draw[gray] (M\a-\v)--(M\b-\v);
        }
    }

    \foreach \k in {1,2,3,4,5}{
        \draw (M\k-1)--(M\k-2)--(M\k-3)--cycle;
    }

    \foreach \k in {1,2,3,4,5}{
        \foreach \u in {1,2,3}{
            \node[dot] at (M\k-\u) {};
        }
    }

    \node[align=center] at (0,-3.1) {Modified clique-blowup of $C_5$\\ of \cref{lem:matchc5}};
\end{scope}

\begin{scope}[shift={(12,0)}, scale=0.9]

    \coordinate (R1-1) at (-0.28,1.98);
    \coordinate (R1-2) at ( 0.28,1.98);
    \coordinate (R1-3) at ( 0.00,2.48);

    \coordinate (R2-1) at (1.82,0.48);
    \coordinate (R2-2) at (2.38,0.48);
    \coordinate (R2-3) at (2.10,0.98);

    \begin{scope}[shift={(1.30,-1.90)},rotate=-18]
        \coordinate (R3-1) at (-0.28,-0.1617);
        \coordinate (R3-2) at ( 0.28,-0.1617);
        \coordinate (R3-3) at ( 0.00, 0.3233);
    \end{scope}

    \begin{scope}[shift={(-1.30,-1.90)},rotate=18]
        \coordinate (R4-1) at (-0.28,-0.1617);
        \coordinate (R4-2) at ( 0.28,-0.1617);
        \coordinate (R4-3) at ( 0.00, 0.3233);
    \end{scope}

    \coordinate (R5-1) at (-2.38,0.48);
    \coordinate (R5-2) at (-1.82,0.48);
    \coordinate (R5-3) at (-2.10,0.98);

    \foreach \a/\b in {1/2,2/3,3/4,4/5,5/1}{
        \foreach \u in {1,2,3}{
            \foreach \v in {1,2,3}{
                \draw[gray] (R\a-\u)--(R\b-\v);
            }
        }
    }

    \foreach \k in {1,2,3,4,5}{
        \foreach \u in {1,2,3}{
            \node[dot] at (R\k-\u) {};
        }
    }

    \node[align=center] at (0,-3.1) {Stable-set blowup of $C_5$\\ of \cref{lem:blowupC5}};
\end{scope}

\end{tikzpicture}
    
    \caption{Constructions of families that are not bounded-apex-perfect}
    \label{fig:three_families}
\end{figure}

For describing the constructions the following terminology will be useful. For a graph $H$, the \emph{$H$-blowup} of a graph $G$ is obtained by replacing each vertex of $V(G)$ by a copy of $H$ (with different copies being pairwise disjoint) and making two such copies complete to one another whenever the two corresponding original vertices in $G$ are adjacent. When $H$ is a complete graph, we call it a \emph{clique-blowup}, and when $H$ is an edgeless graph, we call it a \emph{stable-set blowup}.

The first family consists of graphs that are disjoint unions of $n$ copies of $C_5$, which we showed in the proof of \cref{lem:LinForest} are not $(n-1)$-apex-perfect.

\begin{lemma} \label{lem:c5s}
    For $n\in \mathbb{N}$, let $F_n$ be the disjoint union of $n$ copies of $C_5$.

    Then the following hold:
    \begin{enumerate}[(i)]
        \item \label{claim:fig1i} The family $\mc{F} =\{F_n : n\in \mb{N} \}$ is not bounded-apex-perfect. 
        \item \label{claim:fig1ii} If $H$ is a linear forest contained in $F_n$ for some $n\in \mb{N}$, then $H$ is an induced subgraph of $rP_4$ for some $r \in \mathbb{N}$. 
        \item \label{claim:fig1iii} If $H$ is the complement of a linear forest and $H$ is an induced subgraph of $F_n$ for some $n\in \mb{N}$, then $H$ is an induced subgraph of $P_4$. 
    \end{enumerate}
\end{lemma}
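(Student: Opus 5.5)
We prove the three parts separately. Each one only needs the fact that every induced subgraph of $F_n$ splits into its intersections with the individual copies of $C_5$, and these intersections are pairwise anticomplete.

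For (\ref{claim:fig1i}), we repeat the argument from the proof of \cref{lem:LinForest}. Suppose $\mc{F}$ were $c$-apex-perfect for some $c$, and consider $F_{c+1}$. Deleting any set of at most $c$ vertices misses at least one of the $c+1$ copies of $C_5$ entirely. That copy survives as an induced odd hole, so the remaining graph is not perfect by \cref{thm:SPGT}. This contradicts the choice of $c$.

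For (\ref{claim:fig1ii}), let $H$ be a linear forest that is an induced subgraph of $F_n$, say $H = F_n[S]$. Write $S_i$ for the intersection of $S$ with the $i$-th copy of $C_5$. Then $H$ is the disjoint union of the graphs $F_n[S_i]$, since the copies are pairwise anticomplete. Each $F_n[S_i]$ is an induced subgraph of $C_5$. It cannot be all of $C_5$, because $H$ has no cycles, so $S_i$ misses at least one vertex of its copy. Hence $F_n[S_i]$ is an induced subgraph of $C_5$ minus a vertex, which is $P_4$. Therefore $H$ is an induced subgraph of $nP_4$, and we may take $r=n$.

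For (\ref{claim:fig1iii}), let $H$ be the complement of a linear forest that is an induced subgraph of $F_n$.
\begin{itemize}
\item \emph{$H$ meets at most one copy of $C_5$.} Otherwise $H$ contains two nonadjacent vertices from different copies. Together with a neighbour inside $H$ of one of them, these would give an induced $K_2+K_1$, whose complement $P_3$ is fine. So this local argument does not work, and we argue globally instead.
\item \emph{$H$ is triangle-free.} This holds since $F_n$ is triangle-free.
\item \emph{$H$ has at most four vertices.} A triangle-free graph on at least $5$ vertices has $\alpha(H)\ge 3$ unless $H\cong C_5$, by the Ramsey number $R(3,3)=6$ together with the fact that $C_5$ is the only triangle-free graph on five vertices with stability number $2$. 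If $\alpha(H)\ge 3$, then $\overline{H}$ contains a triangle, so $\overline{H}$ is not a linear forest. If $H\cong C_5$, then $\overline{H}\cong C_5$, which again is not a linear forest. More simply, for $|V(H)|\ge 6$ we get $\alpha(H)\ge 3$ directly from $R(3,3)=6$, and the case of exactly five vertices is handled by the $C_5$ observation.
\item \emph{$H$ is an induced subgraph of $P_4$.} Now $H$ is a triangle-free graph on at most $4$ vertices with $\alpha(H)\le 2$, since a stable set of size $3$ in $H$ would be a triangle in $\overline{H}$. Every induced subgraph of $F_n$ on at most $4$ vertices is a linear forest, because a $4$-vertex subset cannot contain all of a $C_5$. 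So $H$ is a linear forest whose complement is also a linear forest, and \cref{lem:p4lin} shows that $H$ is an induced subgraph of $P_4$.
\end{itemize}

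The main obstacle is the bound $|V(H)|\le 4$ in (\ref{claim:fig1iii}). I would establish it through the Ramsey argument above, handling five vertices via the uniqueness of $C_5$. The only other point needing care is noting that a $5$-vertex induced subgraph of $F_n$ isomorphic to $C_5$ has complement $C_5$, which is not a linear forest.
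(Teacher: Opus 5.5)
Your proof is correct. Parts (i) and (ii) are essentially the paper's argument. In (ii) you split $H$ by the copies of $C_5$ rather than by the components of $H$, which is equally valid and gives $r=n$ directly.

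For (iii) you take a genuinely longer route. You first bound $|V(H)|\le 4$, using that $F_n$ is triangle-free, that $R(3,3)=6$, and that $C_5$ is the only triangle-free $5$-vertex graph with stability number $2$. Only then do you conclude that $H$ is a linear forest and apply \cref{lem:p4lin}. The paper avoids the Ramsey step entirely. Complements of linear forests form a hereditary class, and $\overline{C_5}\cong C_5$ is not a linear forest, so $H$ contains no full copy of $C_5$. By the argument of (ii), $H$ is then an induced subgraph of $nP_4$, hence a linear forest, and \cref{lem:p4lin} finishes. Your third bullet already contains this key observation; applied through heredity, it makes the size bound and the appeal to $R(3,3)$ unnecessary. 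Your detour gains nothing, since \cref{lem:p4lin} yields $|V(H)|\le 4$ anyway.

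Delete the first bullet of (iii). It is not just an abandoned attempt: its headline claim is false. The graph $2K_1$, with its two vertices in different copies of $C_5$, is the complement of the linear forest $K_2$ and meets two copies.
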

\begin{proof}
    For any $c\in \mathbb{N}$, since $F_{c+1}$ contains $c+1$ disjoint induced copies of $C_5$, the deletion of at most $c$ vertices leaves behind at least one copy of $C_5$ as an induced subgraph. Hence $F_{c+1}$ is not $c$-apex-perfect, so we conclude that $\mathcal{F}$ is not $c$-apex-perfect for any $c \in \mathbb{N}$, proving \eqref{claim:fig1i}. 

    To prove \eqref{claim:fig1ii}, we note that each component of a linear forest $H$ contained in $F_n$ is induced in a component of $F_n$. The components of $F_n$ are copies of $C_5$, so each linear forest component is one of $P_4, P_3, K_2, K_2 + K_1, 2K_1$, or $K_1$. All are induced in $P_4$, so the linear forest $H$ is an induced subgraph of $rP_4$ for some $r \in \mathbb{N}$. 

    It remains to prove \eqref{claim:fig1iii}. Let $H$ be an induced subgraph of $F_n$ which is also the complement of a linear forest.  Since $C_5$ is not the complement of a linear forest, it follows that $H$ is an induced subgraph of $nP_4$. Now $H$ is both a linear forest and the complement of one, and hence $H$ is an induced subgraph of $P_4$ by Lemma \ref{lem:p4lin}. 
\end{proof}

The graphs in the second family consist of clique-blowups of $C_5$ where some of the bicliques corresponding to the edges of $C_5$ are replaced with perfect matchings. An example of one such modified clique blow-up appears in the center of \cref{fig:three_families}.

\begin{lemma} \label{lem:matchc5}
    For $n \in \mathbb{N}$, define a graph $F_n$ as follows. Let $V(F_n) := [5] \times [n]$ and
    \begin{itemize}
        \item For $i \in [5]$, we define $V_i := \{(i, j) : j \in [n]\}$; 
        \item For $i \in [5]$, the set $V_i$ is a clique;
        \item $V_1$ is complete to $V_5$ and $V_3$ is complete to $V_4$; 
        \item For all $j \in [n]$ and $(i, i') \in \{(1, 2), (2, 3), (4, 5)\}$, there exists an edge between $(i, j)$ and $(i', j)$. That is, between $V_1$ and $V_2$, between $V_2$ and $V_3$, and between $V_4$ and $V_5$, we add perfect matchings. 
    \end{itemize}

    Then the following hold: 
    \begin{enumerate}[(i)]
        \item \label{claim:fig2i} The family $\mathcal{F} := \{F_n : n \in \mathbb{N}\}$ is not bounded-apex-perfect. 
        \item \label{claim:fig2ii} If $H$ is a linear forest contained in $F_n$ for some $n \in \mathbb{N}$, then $H$ is an induced subgraph of one of $P_6$, $P_4 + K_2$ and $3K_2$. 
        \item \label{claim:fig2iii}  If $H$ is the complement of a linear forest and $H$ is an induced subgraph of $F_n$ for some $n \in \mathbb{N}$, then either $H$ is a complete graph, or $\overline{H}$ is an induced subgraph of $P_5$.
    \end{enumerate}
\end{lemma}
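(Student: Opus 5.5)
For \eqref{claim:fig2i} I would exhibit $n$ vertex-disjoint induced copies of $C_5$ inside $F_n$, as in \cref{lem:c5s}. For each $j \in [n]$, the set $\{(1,j),(2,j),(3,j),(4,j),(5,j)\}$ induces a $C_5$. The five cyclically consecutive pairs are adjacent: via the matchings for $(1,2),(2,3),(4,5)$, and via completeness for $V_3V_4$ and $V_5V_1$. The other five pairs lie in $V_i, V_{i'}$ with no edges at all between $V_i$ and $V_{i'}$. These $n$ vertex sets are disjoint. Hence deleting any $c$ vertices from $F_{c+1}$ leaves one of them intact, so $F_{c+1}$ is not $c$-apex-perfect.

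For \eqref{claim:fig2ii} the plan is to bound the stability number. The pairs $(V_i,V_{i'})$ with some edge between them are exactly the consecutive pairs of the $5$-cycle $1\,2\,3\,4\,5$. A stable set meets each clique $V_i$ at most once. It meets $V_1\cup V_5$ at most once and $V_3\cup V_4$ at most once, since these pairs are complete. So $\alpha(F_n)\le 3$, and every linear forest $H$ contained in $F_n$ has $\alpha(H)\le 3$. Since $\alpha(P_k)=\lceil k/2\rceil$, I then sort by the number of components of $H$:
\begin{itemize}
\item with three components, each is $K_1$ or $K_2$, so $H$ is induced in $3K_2$;
\item with two components, one component has at most two vertices and the other at most four, so $H$ is induced in $P_4+K_2$;
\item with one component, $H=P_k$ with $k\le 6$, so $H$ is induced in $P_6$.
\end{itemize}
No finer structure of $F_n$ is needed for this part.

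For \eqref{claim:fig2iii} let $L=\overline{H}$, a linear forest. If $L$ is edgeless then $H$ is complete, so assume $L$ has an edge. The key step is the following purely combinatorial claim.

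\emph{Claim.} A linear forest with at least one edge that does not contain $K_2+2K_1$ is an induced subgraph of $P_5$.

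To prove the claim, fix an edge $ab$ of $L$. At most one vertex of $L$ is nonadjacent to both $a$ and $b$. Applying this along the path component through the edge shows:
\begin{itemize}
\item that component is a path on at most $5$ vertices;
\item if there is another component, it is a single vertex and the path component has at most $3$ vertices, or both components are single edges.
\end{itemize}
Each of these graphs ($P_k$ for $k\le 5$, $P_3+K_1$, $K_2+K_1$, $2K_2$) is induced in $P_5$. Since $\overline{K_2+2K_1}$ is the diamond $K_4^-$, the claim reduces \eqref{claim:fig2iii} to showing that $F_n$ is diamond-free.

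The diamond-freeness is the main obstacle, and I would do it by direct case analysis. A diamond is a nonadjacent pair $x,y$ together with two adjacent common neighbours $u,v$. Nonadjacent pairs in $F_n$ come in two kinds:
\begin{itemize}
\item $x,y$ in parts $V_i,V_{i'}$ with $i,i'$ non-consecutive on the cycle;
\item $x=(i,j)$ and $y=(i',j')$ with $j\neq j'$, across one of the matched pairs $(1,2),(2,3),(4,5)$.
\end{itemize}
In each case I would list the common neighbourhood of $x$ and $y$ and check that it is a stable set. For example:
\begin{itemize}
\item for $x=(4,j)$, $y=(5,j')$ the common neighbours are exactly $(4,j')$ and $(5,j)$, which are nonadjacent;
\item for $x\in V_1$, $y\in V_3$ the only common neighbours are the $V_2$-vertices matched to both, and there is at most one such vertex.
\end{itemize}
The remaining pairs are handled similarly by symmetry-like checks.
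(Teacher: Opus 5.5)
\textbf{Overall.} Your parts (i) and (ii) match the paper's proof. For (iii) your plan is also the paper's: show $F_n$ is $K_4^-$-free, so $\overline{H}$ is $(K_2+2K_1)$-free, then classify $(K_2+2K_1)$-free linear forests with an edge.

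\textbf{The step that fails.} Your final list ($P_k$ for $k\le 5$, $K_2+K_1$, $P_3+K_1$, $2K_2$) is correct, but the fact you derive it from is false. It is not true that at most one vertex is nonadjacent to both ends of an edge $ab$. In $P_5=v_1v_2v_3v_4v_5$ the edge $v_1v_2$ has the two common non-neighbours $v_4,v_5$, and in $2K_2$ every edge has two. Applied literally, your assertion would rule out $P_5$ and $2K_2$, both of which are on your own list.

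\textbf{The fix.} What $(K_2+2K_1)$-freeness actually gives is that the common non-neighbours of $a$ and $b$ induce a clique. In a linear forest there are therefore at most two of them, and they are adjacent if there are two; this is the fact the paper uses. With it your component-wise enumeration goes through:
\begin{enumerate}
\item A third component would give an edge with two nonadjacent common non-neighbours.
\item If two components each contain an edge, applying the fact to an edge of each forces both to be single edges.
\item An isolated vertex as second component means no other vertex of the path may be a common non-neighbour of any of its edges, so the path has at most three vertices.
\item A single path has at most five vertices, since in $P_6$ the edge $v_1v_2$ together with $v_4,v_6$ induces $K_2+2K_1$.
\end{enumerate}

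\textbf{Where you genuinely differ: diamond-freeness.} Your pair-by-pair check is sound. There are eight kinds of nonadjacent pairs: the five pairs of non-consecutive parts, and the three matched pairs with $j\neq j'$. For each, the common neighbourhood has at most two vertices and is stable. For instance, it is $\{(2,j),(1,j')\}$ for the pair $(1,j),(2,j')$, and $\{(3,j)\}$ for $(2,j),(4,j')$.

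The paper avoids this case analysis with a structural observation. Every edge of a diamond lies in a triangle, and no matching edge of $F_n$ lies in a triangle. Deleting the matching edges leaves the disjoint union of the cliques $V_1\cup V_5$, $V_2$, $V_3\cup V_4$, which contains no diamond. Both arguments are valid. The paper's is shorter and less error-prone; yours is more mechanical but entirely elementary.
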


\begin{proof}
    For all $m \in [n]$, the vertex subset $V^m := \{(i, m) : i \in [5]\}$ of $F_n$ induces a copy of $C_5$. Hence, for any $c\in \mathbb{N}$, we have that $F_{c+1}$ contains $c+1$ disjoint induced copies of $C_5$ and is not $c$-apex-perfect, proving \eqref{claim:fig2i}. 

    To prove \eqref{claim:fig2ii}, we note that $V_1 \cup V_5$, $V_3 \cup V_4$ and $V_2$ is a partition of $V(F_n)$ into three cliques. Consequently, every induced subgraph of  $F_n$ has stability number at most 3. Every linear forest with stability number at most 3 is an induced subgraph of one of $P_6, P_4+K_2$, and $3K_2$, so \eqref{claim:fig2ii} follows. 

    It remains to prove \eqref{claim:fig2iii}. To this end, we first show that for all $n$, the graph $F_n$ is $K_4^-$-free. To see this, let $M$ be the set of edges defined by in the fourth bullet point (i.e. forming perfect matchings). Note that the edges in $M$ are not contained in a triangle in $F_n$, and thus not contained in any copies of $K_4^-$ in $F_n$. Furthermore, each component of $F_n - M$ (the graph obtained by removing the edges of $M$ from $F_n$) is a clique, and hence $K_4^-$-free. 

    It follows that if $H$ is an induced subgraph of $F_n$ for some $n$, and $\overline{H}$ is a linear forest, then $\overline{H}$ is $(K_2 + 2K_1)$-free. If $E(\overline{H}) = \emptyset$, then $H$ is a clique, as desired. Otherwise, let $e= xy$ be an edge of $\overline{H}$ where $x$ has degree 1 in $\overline{H}$. Since $\overline{H}$ is a linear forest, it follows that $y$ has degree at most 2 in $\overline{H}$. The common non-neighbours of $x$ and $y$ in $\overline{H}$ form a clique (of size at most two), since $\overline{H}$ is $(K_2 + 2K_1)$-free. Now $V(\overline{H}) \setminus \{x, y\}$ contains at most three vertices: one adjacent to $y$ (say $z$), and two non-adjacent to both $x$ and $y$ but adjacent to each other (say $u$ and $v$). If $z, u, v$ all exist, then $z$ is adjacent to $u$ or $v$ (for otherwise $\{x, z, u, v\}$ induces a copy of $(K_2 + 2K_1)$) and $\overline{H}$ is isomorphic to $P_5$. In every other case, $H$ has at most 4 vertices, and it is easy to verify that each  linear forest on at most 4 vertices is either an induced subgraph of $P_5$, or isomorphic to $K_2 + 2K_1$, or $|E(\overline{H})| = 0$, as claimed. 
\end{proof}

The graphs in the third family consist of stable-set blowups of $C_5$.

\begin{lemma} \label{lem:blowupC5}
    For $n \in \mathbb{N}$, define a graph $F_n$ as follows. Let $V(F_n) := [5] \times [n]$ and
    \begin{itemize}
        \item For $i \in [5]$, we define $V_i := \{(i, j) : j \in [n]\}$; 
        \item For each $i \in [5]$, $V_i$ is complete to $V_{(i+1)}$ (where we interpret $V_6$ as $V_1$).
    \end{itemize}
    That is, $F_n$ a stable-set blowup of $C_5$ into stable sets of size $n$.
    
    Then the following hold: 
    \begin{enumerate}[(i)]
        \item \label{claim:fig3i} The family $\mathcal{F} := \{F_n : n \in \mathbb{N}\}$ is not bounded-apex-perfect.  
        \item \label{claim:fig3ii} If $H$ is a linear forest contained in $F_n$ for some $n \in \mathbb{N}$, then $H$ is an induced subgraph of one of $P_4$ and $P_3 + rK_1$ for some $r \in \mathbb{N}$. 
        \item \label{claim:fig3iii} If $H$ is the complement of a linear forest  and $H$ is an induced subgraph of $F_n$ for some $n \in \mathbb{N}$, then $\overline{H}$ is an induced subgraph of $P_4$ or isomorphic to $2K_2$.
    \end{enumerate}
\end{lemma}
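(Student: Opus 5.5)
For \eqref{claim:fig3i} I will argue as in \cref{lem:c5s} and \cref{lem:matchc5}. For each $m \in [n]$, the set $V^m := \{(i,m) : i \in [5]\}$ induces a copy of $C_5$, and these $n$ copies are pairwise disjoint. Deleting at most $c$ vertices from $F_{c+1}$ therefore leaves some $V^m$ intact, so $F_{c+1}$ is not $c$-apex-perfect.

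For \eqref{claim:fig3ii}, let $H$ be an induced linear forest in $F_n$, and let $I := \{i \in [5] : V(H) \cap V_i \neq \emptyset\}$. The key observation is that vertices in the same $V_i$ are twins: they are nonadjacent and have the same neighbourhood. Since a linear forest has maximum degree at most $2$ and contains no $C_4$, this gives strong restrictions.

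If $|I| = 5$, then choosing one vertex per class gives an induced $C_5$ inside $H$, which is impossible. So $I$ is a proper subset of the cycle $[5]$, and $F_n[\bigcup_{i\in I} V_i]$ is a stable-set blowup of a linear forest on at most $4$ indices. The cases are as follows.

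\begin{itemize}
\item If $I$ contains $i$ and $i+1$ (indices modulo $5$) with $|V(H)\cap V_i| \ge 2$ and $|V(H)\cap V_{i+1}| \ge 2$, then $H$ contains $C_4$, a contradiction.
\item If $I$ contains $i-1, i, i+1$, then $V(H) \cap V_i$ is complete to the neighbours of $H$ in $V_{i-1} \cup V_{i+1}$. Degree at most $2$ then forces $|V(H)\cap V_{i-1}| = |V(H)\cap V_{i+1}| = 1$, and excluding $C_4$ forces $|V(H)\cap V_i| = 1$.
\end{itemize}

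Working through the possible $I$ (paths on $1$ to $4$ consecutive indices, or a disjoint union such as $\{1,2\} \cup \{4\}$, and so on), each resulting $H$ is one of the following: an induced subgraph of $P_4$ (at most one vertex per class along a path of indices); a star $K_{1,2}$ possibly together with isolated vertices from a nonadjacent class; or a stable set together with at most a $P_3$. In every case $H$ is an induced subgraph of $P_4$ or of $P_3 + rK_1$. I expect this case enumeration to be the main obstacle. To keep it manageable I would organize it by the number of classes meeting $V(H)$ in at least $2$ vertices. If no class does, $H$ is an induced subgraph of $C_5$ minus a vertex, hence of $P_4$. Otherwise the large class contributes isolated vertices or the ends of a $P_3$, and the remaining classes contribute at most a $P_3$ overall.

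For \eqref{claim:fig3iii}, let $H$ be induced in $F_n$ with $\overline{H}$ a linear forest. Since $\alpha(\overline{H}) = \omega(H) \le 2$ (as $F_n$ is triangle-free), $\overline{H}$ is a linear forest with stability number at most $2$. Such forests are exactly the induced subgraphs of $P_4$, $2K_2$, $P_3+K_1$, and $P_5$. I then rule out the two remaining candidates.

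\begin{itemize}
\item $\overline{P_5}$ is the house, which contains $C_4$ plus a triangle, so it does not occur in the triangle-free graph $F_n$.
\item $\overline{P_3 + K_1}$ is the paw, which contains a triangle, so it does not occur either.
\end{itemize}

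The complement of $2K_2$ is $C_4$, which does occur in $F_n$. Hence $\overline{H}$ is an induced subgraph of $P_4$ or isomorphic to $2K_2$, as claimed.
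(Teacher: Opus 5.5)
Parts (i) and (ii) follow the paper's proof. You leave the case enumeration in (ii) unfinished, but your two bullets already contain everything the paper uses, and you do not need to run over index sets $I$. Fix one class $V_i$ containing two vertices of $H$. Your bullets show that $V_{i-1}\cup V_{i+1}$ contains at most one vertex of $H$.
\begin{enumerate}
\item If it contains one, say $w\in V_{i+1}$, then $w$ is adjacent to every vertex of $H$ in $V_i$. By the degree bound, $H$ has exactly two vertices in $V_i$ and none in $V_{i+2}$, and its vertices in $V_{i+3}$ are isolated. Thus $H\cong P_3+rK_1$.
\item If it contains none, the vertices of $H$ in $V_i$ are isolated, and the rest of $H$ lies in $V_{i+2}\cup V_{i+3}$. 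There it induces a complete bipartite graph (possibly with an empty side), which as a linear forest is $P_3$, $K_2$, or edgeless.
\end{enumerate}

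For (iii) your route differs from the paper's and is shorter. The paper passes to $\overline{F_n}$, a clique-blowup of $C_5$ in which every $V_i\cup V_{i+2}$ is a clique. A linear forest meets each such clique in at most two vertices. So two vertices in one class $V_i$ force all other vertices into the clique $V_{i-1}\cup V_{i+1}$, giving $2K_2$, while at most one vertex per class gives $P_4$. You use only $\omega(F_n)=2$, so your argument proves the bound for every triangle-free graph.

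However, your enumeration is wrong: $\alpha(P_5)=\alpha(P_3+K_1)=3$, so neither belongs on your list. A linear forest with $\alpha\le 2$ has at most two components. Hence it is either a path on at most four vertices, or it has two components each isomorphic to $K_1$ or $K_2$. These are exactly the induced subgraphs of $P_4$ together with $2K_2$, and (iii) follows at once with nothing left to exclude.

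As written, excluding only the house and the paw also leaves $\overline{H}\cong 3K_1$ unaccounted for. It is an induced subgraph of both $P_5$ and $P_3+K_1$, but of neither $P_4$ nor $2K_2$. It is excluded by the same triangle-freeness, since a triangle in $\overline{X}$ is just a stable triple in $X$. That is why the bound $\alpha(\overline{H})\le 2$ alone suffices.
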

\begin{proof}
    For all $m \in [n]$, the vertex subset $V^m := \{(i, m) : i \in [5]\}$ of $F_n$ induces a copy of $C_5$. Hence, for any $c\in \mathbb{N}$, $F_{c+1}$ contains $c+1$ disjoint induced copies of $C_5$ and is not $c$-apex-perfect, proving \eqref{claim:fig3i}. 

    Let $H$ be a linear forest contained in $F_n$ for some $n \in \mathbb{N}$. 
    If $H$ contains at most one vertex from each $V_i$, but not from all (as that would induce a $C_5$), then $H$ is an induced subgraph of $P_4$. 
    If $H$ contains two distinct vertices $(i,m)$ and $(i, m')$ from the same $V_i$ for some $i \in [5]$, say $i = 2$, then $H$ does not also contain two vertices from $V_{1} \cup V_{3}$, as this would induce $C_4$. Hence vertices $(2, m)$ and $(2, m')$ have at most one neighbour in $H$. If they have exactly one neighbour, say in $V_{3}$, then $H$ contains no vertices from $V_{4}$, as this would create a vertex of degree greater than 2. Hence, $H$ consists of two vertices from $V_2$, one from $V_{3}$ and some vertices from $V_{5}$, which is $P_3 + rK_1$. On the other hand, if $(2,m)$ and $(2, m')$ have no neighbours in $H$, then the remaining vertices are drawn from $V_{2}, V_{4},$ and $V_{5}$. Thus, $H$ is the union of a stable set and a complete bipartite graph; and since $H$ is a linear forest, it follows that $H$ is an induced subgraph of $P_3 + rK_1$, proving \eqref{claim:fig3ii}. 

    It remains to prove \eqref{claim:fig3iii}. In the complement of $F_n$, each set $V_i$ induces a clique rather than a stable set. Since $C_5$ is self-complementary, so too is the overall structure of the sets $V_i$, such that the graph $\overline{F_n}$ is isomorphic to a clique-blowup of $C_5$. 

    Let $\overline{H}$ be a linear forest contained in $\overline{F_n}$ for some $n \in \mathbb{N}$.
    Again, if $H$ contains at most one vertex from each $V_i$, but not from all (as that would induce a $C_5$), then $H$ is an induced subgraph of $P_4$. 
    Additionally, since each $V_i$ is a clique, $H$ contains at most two vertices from $V_i \cup V_{i+2}$ (reading indices modulo $5$) for all $i \in [5]$. Thus, if $H$ contains two distinct vertices $(i, m)$ and $(i, m')$ from the same $V_i$ for some $i \in [5]$, say $V_1$, then all remaining vertices of $H$ come from $V_2 \cup V_5$. Together these two latter sets contribute at most two vertices to $H$, which are adjacent, such that $H$ is an induced subgraph of $2K_2$, and hence either an induced subgraph of $P_4$ or isomorphic to $2K_2$, as claimed. 
\end{proof}

\section{Families that are bounded-apex-perfect}
\label{sec:successful_pairs}

In this section, we prove for a number of pairs of graphs $H_1, H_2$ that the class of $\{H_1, H_2\}$-free graphs is bounded-apex-perfect. Later, we use these results to provide a full characterization of the bounded-apex-perfect classes defined by two forbidden induced subgraphs.

We will use the following theorem of Ramsey bounding the order of graphs with bounded size cliques and stable sets:
\begin{theorem}[Ramsey \cite{ramsey1987problem}]\label{thm:ramsey}
    For all integers $a, b \geq 1$ there exists an integer $R(a,b)$ such that any graph $G$ with $|V(G)| \geq R(a,b)$ either contains a clique of size $a$ or a stable set of size $b$.
\end{theorem}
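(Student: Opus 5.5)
We prove the existence of $R(a,b)$ by induction on $a+b$, establishing the explicit recursive bound $R(a,b) \le R(a-1,b) + R(a,b-1)$. For the base cases we take $a=1$ or $b=1$. A single vertex is both a clique of size $1$ and a stable set of size $1$, so every graph with at least one vertex contains a clique of size $1$ and a stable set of size $1$. Hence $R(1,b) = R(a,1) = 1$ works for all $a,b \ge 1$.

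For the inductive step, let $a,b \ge 2$ and assume $R(a-1,b)$ and $R(a,b-1)$ exist. Set $N := R(a-1,b) + R(a,b-1)$, and let $G$ be any graph with $|V(G)| \ge N$. Fix a vertex $v \in V(G)$. Partition $V(G)\setminus\{v\}$ into the neighbourhood $A$ of $v$ and the non-neighbourhood $B$ of $v$. Since $|A| + |B| = |V(G)| - 1 \ge N-1$, pigeonhole gives either $|A| \ge R(a-1,b)$ or $|B| \ge R(a,b-1)$.

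In the first case we apply the induction hypothesis to $G[A]$. It contains either a stable set of size $b$, and we are done, or a clique $K$ of size $a-1$. In the latter case $K \cup \{v\}$ is a clique of size $a$, because $v$ is complete to $A$. The second case is symmetric. Applying the hypothesis to $G[B]$ gives either a clique of size $a$, or a stable set $S$ of size $b-1$. In the latter case $S \cup \{v\}$ is a stable set of size $b$, because $v$ is anti-complete to $B$. So $R(a,b) := N$ has the required property, which completes the induction.

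There is no real obstacle here; the only point requiring care is the pigeonhole count. If $|A| \le R(a-1,b)-1$ and $|B| \le R(a,b-1)-1$, then $|V(G)| \le N-1$, contradicting $|V(G)| \ge N$. (Alternatively, one could simply cite the standard Erdős–Szekeres bound $R(a,b) \le \binom{a+b-2}{a-1}$, which follows from the same recursion.)
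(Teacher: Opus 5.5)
Your proof is correct: the base cases, the pigeonhole count ($|A|\le R(a-1,b)-1$ and $|B|\le R(a,b-1)-1$ together would force $|V(G)|\le N-1$), and extending the clique or stable set by $v$ are all handled properly. The paper gives no proof of this statement. It quotes Ramsey's theorem as a black box and later uses only the existence of the numbers $R(r,r)$, $R(3,4r-1)$ and $R(4,4)$, so there is no argument of the paper's to compare yours against. What you supply is the standard Erd\H{o}s--Szekeres induction, rather than Ramsey's original argument, which is phrased in the more general setting of colourings of $r$-element subsets. Your argument makes the paper self-contained on this point. It also gives the explicit bound $R(a,b)\le\binom{a+b-2}{a-1}$, which would make the apex constants explicit in the lemmas on $\{rK_1,K_r\}$-free, $\{K_3,P_3+rK_1\}$-free and $\{K_4^-,\overline{K_4^-}\}$-free graphs; the bare citation does not provide this.
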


We first show a simple application of Ramsey's Theorem: 

\begin{lemma}\label{em:rK_1_(rK_1)c-free}
    For each $r \geq 1$, the class of $\{rK_1, K_r\}$-free graphs is bounded-apex-perfect.
\end{lemma}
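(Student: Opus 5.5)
The plan is to show that every $\{rK_1, K_r\}$-free graph has a bounded number of vertices. Deleting all of them then leaves a perfect graph, namely the empty graph (or $K_1$, if one prefers a nonempty remainder). Concretely, I would set $c := R(r,r) - 1$, where $R(r,r)$ is the constant from \cref{thm:ramsey}, and show that the class is $c$-apex-perfect.

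First, let $G$ be an $\{rK_1, K_r\}$-free graph. Since $G$ is $K_r$-free, it contains no clique of size $r$. Since $G$ is $rK_1$-free, it contains no stable set of size $r$. By \cref{thm:ramsey} with $a = b = r$, any graph on at least $R(r,r)$ vertices contains one of the two, so $|V(G)| \leq R(r,r) - 1 = c$.

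Second, take $S := V(G)$, so that $|S| \leq c$. Then $G - S$ is the null graph, which is vacuously perfect because it has no induced subgraph with $\omega \neq \chi$. If one wants to avoid the null graph, take $S$ to be all vertices but one; $G - S$ is then $K_1$, which is perfect. Either way, $G$ is $c$-apex-perfect, and since $c$ does not depend on $G$, the class is bounded-apex-perfect.

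There is essentially no obstacle here. The only care needed is the convention about the empty graph, together with the trivial case $r = 1$: there $rK_1 = K_1$, so every graph in the class is null and the statement holds trivially.
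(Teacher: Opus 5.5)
Your proof is correct and follows the paper's argument exactly: Ramsey's theorem gives $|V(G)| < R(r,r)$, and deleting all vertices leaves a perfect graph. The only difference is that the paper takes $c = R(r,r)$, while you take the slightly tighter $c = R(r,r)-1$.
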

\begin{proof}
    Let $G$ be a $\{rK_1, K_r\}$-free graph. By \cref{thm:ramsey}, $|V(G)| < R(r,r)$. Hence, $G$ is $R(r,r)$-apex-perfect and thus the class is bounded-apex-perfect, as desired.
\end{proof}

Next, we will consider a more restricted setting. We still forbid a clique (in fact the smallest interesting clique, $K_3$), but instead of forbidding a stable set, we forbid $P_3 + rK_1$. 
We show that this class is not only bounded-apex-perfect, but in fact bounded-apex-bipartite.

\begin{lemma}\label{lem:K_3-P_3+rK_1-free-is-c-apex-perfect}
    For each $r \geq 0$, the class of $\{K_3, P_3 + rK_1\}$-free graphs is bounded-apex-perfect and bounded-apex-bipartite.
\end{lemma}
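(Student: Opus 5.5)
Since bipartite graphs are perfect, it suffices to show bounded-apex-bipartite: we will find a set $S$ of size bounded in terms of $r$ such that $G-S$ has no odd cycle. Let $G$ be $\{K_3, P_3+rK_1\}$-free. The plan is to use the odd cycles of $G$ to locate a small set meeting all of them. If $G$ is bipartite we are done. Otherwise let $C$ be a shortest odd cycle of $G$. It is an induced cycle, and since $G$ is triangle-free, $C$ is an odd hole of length $\ell \geq 5$.

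\textbf{Step 1: bound $\ell$.} A long induced cycle contains an induced $P_3+rK_1$. Take three consecutive vertices of $C$; the rest of $C$ minus their two neighbours on $C$ is an induced path on $\ell-5$ vertices. That path has a stable set of size $\lceil (\ell-5)/2\rceil$, anticomplete to the chosen $P_3$. Hence $\ell \le 2r+4$, roughly.

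\textbf{Step 2: neighbourhoods of $C$.} Every vertex outside $N[C]$ has to be controlled. Pick any induced $P_3$ inside $C$, say $a-b-c$. The set $A$ of vertices anticomplete to $\{a,b,c\}$ has $\alpha(G[A])<r$. Since $G$ is triangle-free, each neighbourhood is stable. So Ramsey (\cref{thm:ramsey}, via $R(3,r)$) gives $|A| < R(3,r)$. The remaining vertices lie in $N(a)\cup N(b)\cup N(c)$, a union of three stable sets.

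To sharpen this, rotate the choice of $P_3$ around $C$. Any vertex $v$ is either anticomplete to some window $\{c_{i-1},c_i,c_{i+1}\}$, or adjacent to a vertex in every window. There are only $\ell$ windows, so the set $X$ of vertices falling into some $A_i$ has size less than $\ell R(3,r)$, which is bounded.

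\textbf{Step 3: show $G - (X\cup V(C))$ is bipartite.} Every remaining vertex $v$ has a neighbour in every window of three consecutive vertices of $C$. Because $N(v)$ is stable and $\ell$ is odd, $v$'s neighbours on $C$ form a stable set hitting every window. This forces a fairly rigid pattern: consecutive gaps of size $1$ or $2$.

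We then want to 2-colour these vertices consistently. This is the main obstacle, and there are two candidate approaches.

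The first attempt is to show that the remaining graph $G'$ has no odd cycle directly. Suppose $D$ is an odd cycle in $G'$; take a shortest one, which is an induced odd hole, of length at most $2r+4$ by Step 1. Apply the same window argument to $D$: all but a bounded number of vertices see every window of $D$.

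This suggests an iterative scheme: repeatedly pick vertex-disjoint short odd holes and bound how many pairwise disjoint odd holes can exist. If $C_1,\dots,C_k$ are pairwise disjoint, with the $C_j$ for $j\ge2$ drawn from vertices seeing every window of $C_1$, we derive a contradiction for large $k$. Fix a $P_3$ $a-b-c$ in $C_1$. Each other $C_j$ must contain a vertex adjacent to neither... this needs care, since every vertex of $C_j$ does see $C_1$.

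The alternative is to use the complementary bound. By Ramsey the number of disjoint odd holes is at most about $R(3,r)$-ish. Choose from each $C_j$ a vertex nonadjacent to a fixed $P_3$ of $C_1$. Such a vertex exists, since $C_j$ has at least $5$ vertices and each vertex of the $P_3$ has at most $2$ neighbours on an induced cycle. Indeed, $C_j$'s vertices anticomplete to $\{a,b,c\}$ number at least $\ell_j - 6$. This may be zero for short holes, so one may need to pick the $P_3$ cleverly, or use that triangle-freeness forces $a$ and $c$ to share few neighbours.

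Once disjoint odd holes are bounded by some $k(r)$, a maximal family of disjoint shortest odd holes has union $U$ of size at most $k(2r+4)$. Every odd cycle of $G-U$... still needs hitting, so the final step is to combine this with the window/Ramsey bound to get an Erdős–Pósa-type bounded hitting set. This Erdős–Pósa step, turning a bounded packing into a bounded transversal using the rigid window structure, is where I expect the real difficulty to lie.
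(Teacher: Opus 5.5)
Your Steps 1 and 2 are sound. A shortest odd cycle $C$ is an odd hole of length at most about $2r+3$, and fewer than $\ell\,R(3,r)$ vertices are anticomplete to some window of three consecutive vertices of $C$. However, the proposal never exhibits a bounded set whose deletion leaves a bipartite graph, and that is the whole content of the lemma.

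In Step 3 the bipartiteness of $G-(X\cup V(C))$ is only stated as a goal. Knowing that each remaining vertex has a stable set of neighbours on $C$ meeting every window says nothing about the edges among the remaining vertices. Step 2 by itself only shows that $G$ minus a bounded set is covered by the three stable sets $N(a),N(b),N(c)$, i.e.\ is $3$-colourable.

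The fallback packing scheme is not carried out, and it rests on a false step. A vertex outside an induced cycle can have many neighbours on it; your own Step 3 vertices have at least $\ell/3$. So ``each vertex of the $P_3$ has at most $2$ neighbours on an induced cycle'' fails, and with it the count $\ell_j-6$. Finally, even a bound on the number of pairwise disjoint odd holes would not give a bounded transversal, because odd cycles do not have the Erd\H{o}s--P\'osa property in general. That conversion is precisely the missing theorem, not a routine last step.

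The missing idea is a global decomposition rather than local analysis around one odd hole. Take $A$ to be a maximum vertex set inducing a $P_3$-free graph. Since $G$ is triangle-free, $G[A]$ is a matching $M$ plus a stable set $S$; let $B=V(G)\setminus A$. The argument then runs as follows.
\begin{enumerate}
\item Maximality of $A$ and $(P_3+rK_1)$-freeness give that every $v\in B$ has at most $r-1$ non-neighbours in $S$. If $v$ has no neighbour in $A$, or its only neighbour in $A$ lies in $S$, it could be added to $A$. Otherwise $v$ lies in an induced $P_3$ with an $M$-edge or with two vertices of $S$, and this $P_3$ is anticomplete to $v$'s non-neighbours in $S$.
\item If $|E(M)|\ge r+1$, a similar argument shows $G$ has maximum degree at most $1$, so $G$ is bipartite. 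Hence we may assume $|V(M)|\le 2r$.
\item If $B$ contained an edge $uv$, then $u,v$ have no common neighbour, so $|S|\le 2r-2$ and $|A|\le 4r-2$. This contradicts maximality once $|V(G)|\ge R(3,4r-1)$, since a stable set of size $4r-1$ is $P_3$-free.
\item Thus $B$ is stable, and $G-V(M)$ is bipartite with sides $S$ and $B$. Graphs on fewer than $R(3,4r-1)$ vertices are handled trivially by deleting all but four vertices.
\end{enumerate}
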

\begin{proof}
    Let $G$ be a $\{K_3, P_3 + rK_1\}$-free graph, and let $c := R(3, 4r -1)-5$. If $|V(G)| \leq c+4$, the graph $G$ is $c$-apex-bipartite as witnessed by deleting all but four vertices. Hence we may assume that $|V(G)| \geq c + 5 = R(3,4r-1)$. 

    Let $A \subseteq V(G)$ be a set of vertices inducing a maximum $P_3$-free subgraph in $G$, and let $B := V(G) \setminus A$. Since $G[A]$ is $P_3$-free, it is a disjoint union of cliques. Moreover, as $G$ and thus $G[A]$ is $K_3$-free, each component of $G[A]$ is either a single vertex or $K_2$. That is, $G[A]$ is the disjoint union of a matching $M$ and a stable set $S$. 

    \begin{claim}\label{claim:K_3-P_3+rK_1-claim-1}
        Every vertex $v \in B$ has at most $r-1$ non-neighbours in $S$.
    \end{claim}
    \begin{subproof}
        Suppose not, and let $v\in B$ be a vertex with $r$ non-neighbours in $S$. Let $s_1, \ldots, s_r \in S$ be $r$ such non-neighbours of $v$ in $S$.
        If $v$ has no neighbours in $A$, then the induced subgraph $G[A \cup \{v\}]$ is $P_3$-free, contradicting the maximality of $A$. Vertex $v$ thus has at least one neighbour $u \in A$. 
        
        Suppose that $u \in V(M)$, and let $u' \in V(M)$ be the vertex in $M$ such that $uu' \in E(M)$, Then, as $G$ is $K_3$-free, $\{v, u,u'\}$ induces a copy of $P_3$. The set $\{v, u, u', s_1, \ldots, s_r\}$ then induces a copy of $P_3 + rK_1$, contradicting the graph $G$ being $(P_3 + rK_1)$-free. 

        Hence, we have that $u \in S$. If $u$ is the sole neighbour of $v$ in $A$, then the induced subgraph $G[A \cup \{v\}]$ is $P_3$-free, again contradicting the maximality of $A$. Let $u' \in A \setminus \{u\}$ be a second neighbour of $v$ in $A$. Analogously as for $u$, it follows that $u' \in S$. Then $\{u,v,u'\}$ induces a copy of $P_3$, and thus $\{u, v, u', s_1, \ldots, s_r\}$ induces a copy of $P_3 + rK_1$. This again contradicts the graph $G$ being $(P_3 + rK_1)$-free.
    \end{subproof}

    \begin{claim}\label{claim:K_3_P_3+rK_1-claim-3}
        If $M$ contains $r+1$ edges, then the graph $G$ is bipartite.
    \end{claim}
    \begin{subproof}
        We show that $G$ is bipartite by showing that $G$ has maximum degree 1. To show this for vertices in $V(M)$, we first claim that $V(M)$ is anti-complete to $B$ in $G$. Suppose not, and let $v \in B$ be a vertex in $B$ with a neighbour $u$ in $M$. Because $G$ is $K_3$-free, $v$ is adjacent to at most one endpoint of each edge in $M$. Let $u' \in V(M)$ be the vertex such that $uu' \in E(M)$. Then $\{v,u,u'\}$ induces a copy of $P_3$ in $G$. Moreover, since $|E(M)| \geq r + 1$, there exist at least $r$ other edges in $M$ containing a non-neighbour of $v$. Taking a non-neighbour of $v$ in $r$ such edges, together with the vertices $\{v, u, u'\}$, induces a copy of $P_3 + rK_1$. This contradicts the graph $G$ being $(P_3 + rK_1)$-free. Hence, $V(M)$ is indeed anti-complete to $B$. In fact, since $M$ and $S$ are anti-complete by their definition, we have that $V(M)$ is anti-complete to $V(G) \setminus V(M)$. 

        Next, suppose that there exists a vertex $v \in V(G)$ of degree at least $2$. We first observe that since $V(M)$ is anti-complete to $V(G) \setminus V(M)$, and as $M$ itself is $1$-regular, $v \not \in V(M)$. Let $u, u' \in V(G)$ be two neighbours of $v$. Due to the facts that $v \not \in V(M)$ and $V(M)$ being anti-complete to $V(G) \setminus V(M)$, we have that $u, u' \in V(G) \setminus
         V(M)$ as well. Because $G$ is $K_3$-free, $\{v, u, u'\}$ induces $P_3$. Now taking one endpoint of $r$ edges in $M$ together with $\{v, u, u'\}$ gives a set inducing $P_3 + rK_1$, again a contradiction. 

         Thus, the maximum degree of $G$ is $1$, and hence each component of $G$ is either $K_1$ or $K_2$. Therefore, $G$ is bipartite, as claimed.
    \end{subproof}

    We now use these two claims to show that the set $B$ does not contain complex structures.

    \begin{claim}\label{claim:K_3-P_3+rK_1-claim-2}
        The set $B$ is stable.
    \end{claim}
    \begin{subproof}
        Suppose not, and let $uv \in E(G[B])$ be an edge witnessing that the set $B$ is not stable. Since $G$ is $K_3$-free, the vertices $u$ and $v$ have no common neighbours. By \cref{claim:K_3-P_3+rK_1-claim-1} $u$ and $v$ both have at most $r-1$ non-neighbours in $S$. We therefore observe that since each vertex in $S$ is non-adjacent to at least one of $u$ and $v$, the set $S$ contains at most $2r -2$ vertices. Hence, combining this with the bound from \cref{claim:K_3_P_3+rK_1-claim-3}, we get that:
        \[|A| = |V(M)| + |S| \leq 2r + (2r -2) = 4r -2.\]
        However, since $|V(G)| \geq R(3, 4r-1)$, and as $G$ is $K_3$-free, $G$ contains a stable set of size at least $4r-1$. Because such a stable set is $P_3$-free, its existence contradicts the maximality of the set $A$.
    \end{subproof}

    By \cref{claim:K_3-P_3+rK_1-claim-2} the set $B$ is stable, so the induced subgraph $G[S \cup B] \cong G - V(M)$ is bipartite.  By \cref{claim:K_3_P_3+rK_1-claim-3} we know that $|V(M)| \leq 2r \le c$. Hence, $G$ is $c$-apex-bipartite and thus $c$-apex-perfect, as desired.
\end{proof}

In the next result, both the clique and the stable set of \cref{em:rK_1_(rK_1)c-free} are replaced by slightly more complex forbidden induced subgraphs. We make use of the following 1988 result of Olariu.

\begin{theorem}[Olariu~\cite{olariu1988pawfree}] \label{thm:olariu}
Let $G$ be a connected $\overline{P_3+K_1}$-free graph. Then $G$ is either triangle-free or complete multipartite. 
\end{theorem}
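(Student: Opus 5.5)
The plan is to prove the statement directly by a maximality argument. Suppose $G$ is connected, $\overline{P_3+K_1}$-free (the \emph{paw}: a triangle together with a pendant vertex), and not triangle-free. A triangle is itself a complete multipartite induced subgraph with three parts. So we may choose a vertex set $X \subseteq V(G)$ that is maximal subject to $G[X]$ being complete multipartite with at least three parts $V_1, \dots, V_k$ ($k \geq 3$). The goal is to show $X = V(G)$. Since $G$ is connected, it suffices to show that no vertex $x \notin X$ has a neighbour in $X$. For this we show that any such $x$ could be added to $X$, contradicting maximality.

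\textbf{Step 1: a neighbour kills at most one vertex of each triangle.} Let $x \notin X$ have a neighbour $a \in V_i$, and let $b \in V_j$, $c \in V_l$ lie in two further distinct parts. Then $\{a,b,c\}$ is a triangle. If $x$ were adjacent to neither $b$ nor $c$, then $\{a,b,c,x\}$ would induce a paw. Hence $x$ is adjacent to at least one of $b, c$.

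\textbf{Step 2: within each part, $x$ sees all or nothing.} Suppose $x$ is adjacent to $u \in V_1$ and non-adjacent to $w \in V_1$, and pick $y \in V_2$, $z \in V_3$. If $x$ were adjacent to $y$, then $\{x,u,y\}$ would be a triangle. The vertex $w$ is adjacent to $y$ but to neither $u$ (same part) nor $x$, so $\{x,u,y,w\}$ would induce a paw. Hence $x \not\sim y$, and by symmetry $x \not\sim z$. This contradicts Step 1 applied to the triangle $\{u,y,z\}$. Therefore, for every part $V_i$, $x$ is either complete or anti-complete to $V_i$.

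\textbf{Step 3: $x$ misses at most one part.} Assume $x$ has a neighbour in $X$, so by Step 2 it is complete to some part $V_l$. Suppose $x$ were anti-complete to two distinct parts $V_i$ and $V_j$. Taking $a \in V_l$, $b \in V_i$, $c \in V_j$ gives a triangle $\{a,b,c\}$ in which $x$ sees only $a$, contradicting Step 1. Hence $x$ is anti-complete to at most one part. We then extend the partition:
\begin{itemize}
\item if $x$ misses exactly one part $V_i$, add $x$ to $V_i$;
\item if $x$ is complete to $X$, add $\{x\}$ as a new part.
\end{itemize}
In either case $G[X \cup \{x\}]$ is complete multipartite with at least three parts, contradicting the maximality of $X$.

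By connectivity, $X = V(G)$, so $G$ is complete multipartite. The only real obstacle is Step 2, which needs the right choice of paw, using the non-neighbour $w$ as the pendant vertex. Steps 1 and 3 are immediate once this is in place.
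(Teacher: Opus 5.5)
Your proof is correct and complete. The paper gives no argument of its own for this statement. It imports it as a black box from Olariu's note and uses it only in the lemma on $\{P_3+rK_1,\overline{P_3+K_1}\}$-free graphs, so the only comparison is \emph{cite versus prove}.

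Each step of your maximal-complete-multipartite argument holds up:
\begin{itemize}
\item The triangle gives a valid starting set with three parts.
\item In Step~2, $\{x,u,y,w\}$ is indeed a paw: it is the triangle $xuy$ with $w$ pendant at $y$. Having $k\ge 3$ is exactly what lets you choose $y$ and $z$ in two further parts, so that Step~1 applies to the triangle $\{u,y,z\}$.
\item In Step~3, any two parts that $x$ misses are automatically different from the nonempty part it is complete to.
\item Both extensions keep at least three parts, so maximality really is contradicted.
\item Connectivity enters only at the last step.
\end{itemize}

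The citation buys the paper brevity. Your route buys self-containment: together with the paper's lemma on $\{K_3,P_3+rK_1\}$-free graphs, it makes the corresponding case of the main theorem independent of the external reference.
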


\begin{lemma} \label{lem:paw}
For each $r \in \mathbb{N}$, the class of $\{P_3+rK_1, \overline{P_3+K_1}\}$-free graphs is bounded-apex-perfect.
\end{lemma}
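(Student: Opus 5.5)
The plan is to combine \cref{thm:olariu} with \cref{lem:K_3-P_3+rK_1-free-is-c-apex-perfect}, after first bounding the number of components that can matter. Fix $r$, and let $c_r$ be the constant from \cref{lem:K_3-P_3+rK_1-free-is-c-apex-perfect} for which $\{K_3, P_3+rK_1\}$-free graphs are $c_r$-apex-perfect. Let $G$ be $\{P_3+rK_1, \overline{P_3+K_1}\}$-free. I aim to show that $G$ is $r c_r$-apex-perfect.

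\textbf{Step 1: reducing to few components.} First I handle the case where no component of $G$ contains an induced $P_3$. Then every component is a clique, so $G$ is a disjoint union of cliques and hence perfect (it is $P_4$-free). Otherwise, some component $C$ contains an induced $P_3$. Suppose $G$ had at least $r$ components other than $C$. Picking one vertex from each of $r$ of them, together with the $P_3$ in $C$, would induce $P_3 + rK_1$, a contradiction. So $G$ has at most $r$ components.

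\textbf{Step 2: treating each component.} Each component $D$ is connected and $\overline{P_3+K_1}$-free, so by \cref{thm:olariu} it is either complete multipartite or triangle-free.
\begin{itemize}
\item If $D$ is complete multipartite, it is $P_4$-free: its complement is a disjoint union of cliques, and $P_4$ is self-complementary. Hence $D$ is perfect and needs no apices.
\item If $D$ is triangle-free, it is $\{K_3, P_3+rK_1\}$-free, being an induced subgraph of $G$. By \cref{lem:K_3-P_3+rK_1-free-is-c-apex-perfect}, there is a set $S_D$ with $|S_D| \le c_r$ such that $D - S_D$ is perfect.
\end{itemize}

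\textbf{Step 3: assembling.} Let $S$ be the union of the sets $S_D$. Then $|S| \le r c_r$. Each component of $G - S$ is contained in some $D - S_D$, or in a complete multipartite component, and so is perfect. A disjoint union of perfect graphs is perfect, because odd holes and odd antiholes (on at least five vertices) are connected and must therefore lie inside a single component. Alternatively, $\chi$ and $\omega$ of a disjoint union are both maxima over the components. Thus $G$ is $rc_r$-apex-perfect.

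I do not expect a real obstacle. The only point needing care is Step 1: many components would defeat the bound, and the way around this is to observe that only cliques can appear in large numbers, and cliques are harmless.
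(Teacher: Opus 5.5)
Your proof is correct and follows the same route as the paper: apply \cref{thm:olariu} to each component, observe that complete multipartite components are perfect, and handle the triangle-free components with \cref{lem:K_3-P_3+rK_1-free-is-c-apex-perfect}. The only difference is that the paper skips your Step 1. It applies that lemma once to the union of all triangle-free components, which is itself $\{K_3, P_3+rK_1\}$-free as an induced subgraph of $G$, so many components never threaten the bound and one gets $c_r$ apices rather than $rc_r$.
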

\begin{proof} Let $c$ be as in \cref{lem:K_3-P_3+rK_1-free-is-c-apex-perfect}. We claim that 
$\{P_3+rK_1, \overline{P_3+K_1}\}$-free graphs are $c$-apex-perfect.

Let $G$ be a $\{P_3+rK_1, \overline{P_3+K_1}\}$-free graph. By \cref{thm:olariu}, every connected component of $G$ is either complete multipartite or triangle-free. Let $H$ be the union of the triangle-free connected components of $G$. By \cref{lem:K_3-P_3+rK_1-free-is-c-apex-perfect}, it follows that there is a set $S$ with $|S| \leq c$ such that $H - S$ is perfect. Since every connected component of $G - S$ is perfect (because it is either complete multipartite or an induced subgraph of $H - S$), it follows that $G - S$ is perfect, as desired. 
\end{proof}

It is possible to extract the following result from a result of Brandst{\"a}dt and Mahfud \cite{brandstadt2002maximum} (which only considers prime graphs), but we give a short, self-contained proof for completeness. 

\begin{lemma} \label{lem:diamond}
    The class of $\{K_4^-, \overline{K_4^-}\}$-free graphs is bounded-apex-perfect.
\end{lemma}

    \begin{proof}
        Let $G$ be a $\{K_4^-, \overline{K_4^-}\}$-free graph. In order to eventually invoke Ramsey's Theorem, we first address the case where $G$ has a relatively large stable set. 
        \begin{claim}\label{cl:stable}
            If $G$ contains a stable set of size at least $4$, then $G$ is either: 
            \begin{itemize}
                \item A graph obtained from a complete bipartite graph by deleting a (not necessarily perfect) matching; or
                \item A graph obtained from a complete bipartite graph by adding an isolated vertex. 
            \end{itemize}
        \end{claim}
        \begin{subproof}
        Let $S$ be a maximum stable set in $G$. It follows that every vertex in $V(G) \setminus S$ has a neighbour in $S$. Moreover, suppose that $v \in V(G) \setminus S$ has two distinct non-neighbours $u, u' \in S$. Let $v'$ be a neighbour of $v$ in $S$. Now $\{v, v', u, u'\}$ induces a copy of $\overline{K_4^-}$, a contradiction. Consequently, every vertex of $V(G) \setminus S$ has at most one non-neighbour in $S$. 

        Suppose for a contradiction that $G - S$ contains two adjacent vertices $u, v$. Since $|S| \geq 4$ and each of $u, v$ has at most one non-neighbour in $S$, it follows that there are two distinct vertices $x, y \in S$ such that $u, v \in N(x) \cap N(y)$. Now, $\{u, v, x, y\}$ induces a copy of $K_4^-$, a contradiction. 

        It follows that $T := V(G) \setminus S$ is a stable set in $G$. As above, no vertex in $S$ has a neighbour and two non-neighbours in $T$. Let $S'$ be the set of vertices in $S$ with no neighbour in $T$. It follows that each vertex in $S \setminus S'$ has at most one non-neighbour in $T$. 

        Suppose first that $S' = \emptyset$. Then, as each vertex in $T$ has at most one non-neighbour in $S$ and vice versa, it follows that the first outcome of the claim holds. 

        Therefore, we may assume that $S' \neq \emptyset$. Note that each vertex in $S'$ is isolated in $G$. If $G$ is edgeless, then the first outcome of the claim holds (one side of the bipartition is empty). If $G$ contains an edge as well as at least two isolated vertices, then $G$ contains a copy of $\overline{K_4^-}$, a contradiction. Therefore, we may assume that $|S'| = 1$. The graph $G' := G[(S \setminus S') \cup T]$ is bipartite, and if $G'$ is complete bipartite, then the second outcome of the claim holds. 
        If $G'$ is not complete bipartite, then it contains an induced copy of $K_2 + K_1$. Along with the isolated vertex in $S'$, this forms an induced copy of $\overline{K_4^-}$, a contradiction. 
        \end{subproof}
    By \cref{cl:stable} applied to both $G$ and $\overline{G}$, and noticing that the outcomes of the claim are bipartite and hence perfect, we may assume that $G$ has no clique and no stable set of size at least 4, and so by \cref{thm:ramsey}, we have $|V(G)| \leq R(4, 4).$ 
    \end{proof}

\section{Proof of \cref{thm:main}}
\label{sec:main_proof}

We first include an auxiliary result we will use to prove \cref{thm:main}. A \emph{pseudo-split graph} is a graph $G$ with vertex set $K \cup S \cup R$ where $K$ is a clique, $S$ is a stable set, $R$ is either empty or induces a copy of $C_5$, and $R$ is complete to $K$ and anti-complete to $S$. 
In 1994, Maffray and Preissmann gave the following forbidden induced subgraph characterization of pseudo-split graphs.
\begin{theorem}[Maffray and Preissmann \cite{maffray1994linear}]\label{thm:PseudoSplit}
    The class of pseudo-split graphs is exactly the class of $\{2K_2, C_4\}$-free graphs.
\end{theorem}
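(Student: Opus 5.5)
We prove the two inclusions separately. Write $R = r_1r_2r_3r_4r_5$ for a $C_5$ with indices read modulo $5$.

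\textbf{Pseudo-split graphs are $\{2K_2, C_4\}$-free.} Let $G$ be pseudo-split with partition $K \cup S \cup R$.

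A $2K_2$ is impossible. Two disjoint edges need two disjoint "non-stable" pieces. Every edge meets $K$ or lies inside $R$, and $K$ is complete to $R$. So two disjoint edges that are anticomplete to each other would have to both lie in $R$ or both meet $K$. The first is impossible because $C_5$ is $2K_2$-free. The second is impossible because $K$ is a clique, so the two edges would be joined through $K$.

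A $C_4$ is also impossible. Such a cycle has two disjoint non-adjacent pairs, and at most one vertex of each non-adjacent pair lies in $K$. A short case check on how many of its vertices lie in $K$, $S$ and $R$ then finishes this direction. It uses that $C_5$ is $C_4$-free, that $S$ is stable, and that $R$ is anticomplete to $S$ and complete to $K$.

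\textbf{$\{2K_2, C_4\}$-free graphs are pseudo-split: the case where $G$ contains an induced $C_5$.} Fix such an $R = r_1\cdots r_5$.

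First we show every $v \notin R$ is complete or anticomplete to $R$. Suppose $v$ has a neighbour and a non-neighbour in $R$; we inspect $N(v) \cap R$ up to symmetry.
\begin{itemize}
\item If $N(v)\cap R$ is $\{r_1\}$ or $\{r_1,r_2\}$, then $vr_1$ and $r_3r_4$ induce a $2K_2$.
\item If $N(v)\cap R$ contains two non-consecutive vertices $r_1, r_3$ but not $r_2$, then $v r_1 r_2 r_3$ induces a $C_4$.
\item The remaining patterns are $\{r_1,r_2,r_3\}$ (where $vr_1r_5r_4r_3$ gives trouble) and four-element sets. Each yields a $C_4$ using $v$, two neighbours of $v$ at distance $2$ on $R$, and the vertex of $R$ between them that is missed by $v$, or else a $2K_2$.
\end{itemize}

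Let $K$ be the vertices complete to $R$ and $S$ the vertices anticomplete to $R$.
\begin{itemize}
\item $K$ is a clique: two non-adjacent $u, w \in K$ together with the non-adjacent pair $r_1, r_3$ induce the $C_4$ $u r_1 w r_3$.
\item $S$ is stable: an edge inside $S$ together with $r_1r_2$ induces a $2K_2$.
\end{itemize}
Hence $G$ is pseudo-split. Edges between $K$ and $S$ are unrestricted, as the definition allows.

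\textbf{The case where $G$ is $\{2K_2, C_4, C_5\}$-free.} Here we show $G$ is split, so it is pseudo-split with $R = \emptyset$. Choose a maximum clique $K$ such that $S := V(G)\setminus K$ spans as few edges as possible, and suppose $xy$ is an edge in $S$. By maximality of $K$, each of $x, y$ has a non-neighbour in $K$.

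Let $\bar N(x)$ and $\bar N(y)$ denote the non-neighbours of $x$ and $y$ in $K$.
\begin{itemize}
\item These sets are comparable. Otherwise pick $a \in K$ adjacent to $y$ but not $x$, and $b \in K$ adjacent to $x$ but not $y$; then $x b a y$ induces a $C_4$.
\item Two common non-neighbours $a, b \in K$ would give the $2K_2$ formed by $xy$ and $ab$.
\end{itemize}
So, say, $\bar N(x) = \{a\} \subseteq \bar N(y)$, and $K' := K - a + x$ is another maximum clique.

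The main obstacle is to show that $V(G)\setminus K'$ spans fewer edges than $S$, or else to derive a contradiction directly. The count changes by $\deg_S(a) - \deg_S(x)$, so we must compare the neighbourhoods of $a$ and $x$ inside $S$. Any neighbour $z \in S$ of $a$ that is not adjacent to $x$ should produce a $C_4$, a $2K_2$, or a $C_5$. The candidates are built from $z, a, y, x$ and a vertex of $K$, and this is exactly where $C_5$-freeness must be used (for instance a $C_5$ of the form $x y \cdots z a$). I would first try to show $N_S(a) \setminus \{x\} \subseteq N_S(x)$, and use that $a y \notin E(G)$ to obtain the strict decrease. If that bookkeeping gets stuck, the fallback is the classical Földes--Hammer argument: $\{2K_2,C_4,C_5\}$-free graphs are split, via a degree-sequence choice of $K$.
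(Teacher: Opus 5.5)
The paper gives no proof of this statement; it is imported from Maffray and Preissmann. Your route is the standard self-contained one: fix an induced $C_5$ and show every other vertex is complete or anticomplete to it, and otherwise reduce to the F\"oldes--Hammer characterization of split graphs as the $\{2K_2,C_4,C_5\}$-free graphs, which the paper itself quotes in its split-graph section. The easy inclusion is sound, including the $C_4$ case check you leave implicit. So are the clique and stable checks for $K$ and $S$ in the $C_5$ case. Two points need repair.

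\textbf{The pattern $\{r_1,r_2,r_3\}$.} Your witness for $N(v)\cap R=\{r_1,r_2,r_3\}$ is wrong. The cycle $vr_1r_5r_4r_3$ is an \emph{induced} $C_5$: its chords $vr_5, vr_4, r_1r_4, r_1r_3, r_5r_3$ are all absent, and an induced $C_5$ is permitted. This is exactly the one pattern not caught by your rule ``two neighbours at distance $2$ whose middle vertex is missed''. The correct obstruction is the $2K_2$ formed by the edges $vr_2$ and $r_4r_5$.

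\textbf{The $C_5$-free case.} In the $\{2K_2,C_4,C_5\}$-free case you leave the decisive step open. You can close it in one of two ways.
\begin{itemize}
\item Commit to citing F\"oldes--Hammer, which makes this case immediate.
\item Finish your extremal argument. The inclusion $N_S(a)\subseteq N_S(x)$ you want does hold, as follows.
\end{itemize}
Let $z\in S$ with $za\in E(G)$ and $zx\notin E(G)$; then $z\neq y$, since $ay\notin E(G)$.
\begin{itemize}
\item If $zy\notin E(G)$, then $xy$ and $za$ induce a $2K_2$.
\item So $zy\in E(G)$, and $xyza$ is an induced $P_4$.
\end{itemize}
Now take any $b\in K\setminus\{a\}$; it is adjacent to both $a$ and $x$.
\begin{itemize}
\item If $b$ misses both $y$ and $z$, then $xyzab$ is an induced $C_5$.
\item If $b$ sees $y$ but not $z$, then $byza$ is an induced $C_4$.
\item If $b$ sees $z$ but not $y$, then $xyzb$ is an induced $C_4$.
\end{itemize}
Hence $z$ is adjacent to every vertex of $K$, and $K\cup\{z\}$ is a larger clique, contradicting the maximality of $K$.

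Therefore $N_S(a)\subseteq N_S(x)\setminus\{y\}$. Passing to $K'=K-a+x$ strictly lowers the number of edges spanned by the complement. This contradicts your choice of $K$, so $S$ is stable and $G$ is split. This is indeed where $C_5$-freeness enters, as you anticipated.
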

 We also use the following result on pseudo-split graphs.
\begin{theorem}[Blázsik, Hujter, Pluhár, \& Tuza, Corollary 1.2~\cite{BLAZSIK199351}]\label{thm:2K2-C4-free-structure}
    Let $G$ be a $\{2K_2, C_4\}$-free graph. Then $G$ is either a split graph, or there are exactly five distinct vertices $v_1, \ldots, v_5$ such that for each $i \in [5]$ the graph $G- v_i$ is a split graph.
\end{theorem}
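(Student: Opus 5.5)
The plan is to derive the statement directly from \cref{thm:PseudoSplit}. Let $G$ be $\{2K_2, C_4\}$-free. By \cref{thm:PseudoSplit}, $G$ is pseudo-split, so we fix a partition $V(G) = K \cup S \cup R$ with the following properties: $K$ is a clique, $S$ is stable, $R$ is empty or induces $C_5$, and $R$ is complete to $K$ and anti-complete to $S$.

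If $R = \emptyset$, then $(K, S)$ witnesses that $G$ is split, which is the first outcome.

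Now suppose $R = \{v_1, \dots, v_5\}$ induces the cycle $v_1v_2v_3v_4v_5v_1$. We first check that $G - v_i$ is split for each $i \in [5]$. By symmetry take $i = 1$. Then $R \setminus \{v_1\}$ induces the path $v_2v_3v_4v_5$. Put the two interior vertices into the clique side and the two endpoints into the stable side:
\begin{itemize}
\item $K \cup \{v_3, v_4\}$ is a clique, since $v_3v_4$ is an edge and $R$ is complete to $K$;
\item $S \cup \{v_2, v_5\}$ is stable, since $v_2v_5$ is a non-edge of the cycle and $R$ is anti-complete to $S$.
\end{itemize}
These two sets partition $V(G - v_1)$, so $G - v_1$ is split.

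It remains to show that the five vertices are exactly the ones whose deletion leaves a split graph, and that $G$ itself is not split. Both follow once we know $C_5$ is not split: if it were, it would have a stable set $Y$ of size at most $\alpha(C_5) = 2$, so the clique side would have at least $3$ vertices. That is impossible because $C_5$ is triangle-free. Since split graphs are closed under taking induced subgraphs, and $G$ contains $G[R] \cong C_5$, the graph $G$ is not split. Likewise, for any $u \notin R$, the graph $G - u$ still contains $G[R] \cong C_5$ and so is not split. Hence the vertices whose deletion yields a split graph are exactly $v_1, \dots, v_5$.

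The only step needing care is this last uniqueness claim, which reduces to the elementary observation that $C_5$ is not split. Everything else is a direct reading of the pseudo-split structure.
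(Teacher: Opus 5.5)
Your proof is correct. The paper gives no proof of its own and only cites Corollary~1.2 of the cited source, which is derived there from the same clique / stable set / possibly-empty $C_5$ decomposition as in \cref{thm:PseudoSplit}; your derivation follows essentially that route, including the check via ``$C_5$ is not split'' that the five vertices are exactly those of $R$.
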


We are now ready to prove \cref{thm:main}, which we restate for the reader's convenience. 
\main*

\begin{proof}
    We first show that if the pair $\{H_1, H_2\}$ satisfies one of the conditions of \cref{thm:main}, then the class of $\{H_1, H_2\}$-free graphs is bounded-apex-perfect.  Note that by \cref{lem:complement}, for any $c \in \mathbb{N}$ we have that $\{H_1, H_2\}$-free graphs are $c$-apex-perfect if and only if $\{\overline{H_1}, \overline{H_2}\}$-free graphs are $c$-apex-perfect. Therefore, it suffices to consider the cases stated in the theorem, with those obtained by taking complements of both $H_1$ and $H_2$ subsequently following. 
    \begin{enumerate}
        \item If $H_1$ is an induced subgraph of $P_4$, then the result follows from \cref{thm:ExcludingOne}. 
        \item If $H_1$ is an induced subgraph of $\overline{P_3+K_1}$ and $H_2$ is an induced subgraph of $P_3+rK_1$ for some $r \in \mathbb{N}$, then the result follows from \cref{lem:paw}.  
        \item If $H_1$ is a complete graph and $H_2$ is edgeless, then the result follows from \cref{em:rK_1_(rK_1)c-free}. 
        \item If $H_1$ is an induced subgraph of $\overline{K_2+K_1}$ and $H_2$ is an induced subgraph $K_2 + 2K_1$, then the result follows from \cref{lem:diamond}.
        \item If $H_1 \cong C_4$ and $H_2 \cong 2K_2$, then by \cref{thm:2K2-C4-free-structure}, it follows that $\{H_1, H_2\}$-free graphs are $1$-apex-perfect.
    \end{enumerate}

    It remains to show that if the class of $\{H_1, H_2\}$-free graphs is bounded-apex-perfect, then one of the above cases holds. From \cref{lem:LinForest}, it follows that one of $H_1$ and $H_2$ is a linear forest; and one of $H_1$ and $H_2$ is the complement of a linear forest. By \cref{lem:p4lin}, it follows that if one of $H_1$ and $H_2$ is both a linear forest and the complement of a linear forest, then Case~\ref{case:p4} applies. Therefore, from now on, we assume that $H_1$ is the complement of a linear forest, and $H_2$ is a linear forest, and that neither $H_1$ nor $H_2$ is an induced subgraph of $P_4$. From \cref{lem:c5s} (and considering the family given by the complements of the graphs in \cref{lem:c5s}), it follows that both $\overline{H_1}$ and $H_2$ are induced subgraphs of $rP_4$ for some $r \in \mathbb{N}$.  

 By \cref{lem:blowupC5}, one of the following holds: 
    \begin{itemize}
        \item $H_2$ is an induced subgraph $P_4$ or $P_3+rK_1$; or
        \item $\overline{H_1}$ is an induced subgraph of $P_4$ or $2K_2$. . 
    \end{itemize}
    Since neither of $\overline{H_1}$ and $H_2$ is an induced subgraph of $P_4$, we conclude that one of the following holds: 
    \begin{itemize}
        \item $H_2$ is an induced subgraph of $P_3+rK_1$; or
        \item $\overline{H_1} \cong 2K_2$. 
    \end{itemize}
    If $\overline{H_1} \cong 2K_2$, then by considering the pair $\{\overline{H_1}, \overline{H_2}\}$, it follows from \cref{lem:blowupC5} that Case~\ref{case:c42p2} applies. Hence, we may assume that $H_2$ is an induced subgraph of $P_3+rK_1$. Similarly, by again considering \cref{lem:blowupC5} for the pair $\{\overline{H_1}, \overline{H_2}\}$, we conclude that $\overline{H_1}$ is an induced subgraph of $P_3+rK_1$. 
    
    By \cref{lem:matchc5}, one of the following holds: 
    \begin{itemize}
        \item $H_2$ is an induced subgraph of $P_6$, $P_4+K_2$, or $3K_2$; or
        \item $\overline{H_1}$ is an induced subgraph of $P_5$, or $H_1$ is a complete graph. 
    \end{itemize}
    Since neither of $H_1, H_2$ is an induced subgraph of $P_4$, and since $H_2$ is an induced subgraph of $P_3+rK_1$, we conclude that one of the following holds: 
    \begin{itemize}
        \item $H_2$ is an induced subgraph of $P_3+K_1$, or $K_2 + 2K_1$; or
        \item $\overline{H_1}$ is an induced subgraph of $P_3+K_1$, or $H_1$ is a complete graph. 
    \end{itemize}
    Taking the complements of the graphs constructed in \cref{lem:matchc5}, we similarly conclude that: 
    \begin{itemize}
        \item $\overline{H_1}$ is an induced subgraph of $P_3+K_1$, or $K_2 + 2K_1$; or
        \item $H_2$ is an induced subgraph of $P_3+K_1$, or $\overline{H_2}$ is a complete graph. 
    \end{itemize}
    Suppose first that $H_1$ is a complete graph. If $\overline{H_2}$ is also a complete graph, then Case~\ref{case:ramsey} applies. Now assume that $\overline{H_2}$ is not a complete graph. If $H_2$ is an induced subgraph of $P_3+K_1$, then Case~\ref{case:triangle} applies; therefore, we assume that $H_2$ is not an induced subgraph of $P_3+K_1$. It follows that $\overline{H_1}$ is an induced subgraph of $P_3+K_1$ or $K_2+2K_1$. Since $H_1$ is a complete graph, it follows that $H_1$ is an induced subgraph of $K_3$ and Case~\ref{case:triangle} applies. 

    Now we assume that neither $H_1$ nor $\overline{H_2}$ is a complete graph. Then, by our previous logic, both $\overline{H_1}$ and $H_2$ are induced subgraphs of $P_3 + rK_1$ for some $r \in \mathbb{N}$. If $\overline{H_1}$ is an induced subgraph of $P_3 + K_1$, it follows that Case~\ref{case:triangle} applies. Hence, as $\overline{H_1}$ is also assumed to not be complete, we find that $H_2$ is an induced subgraph of $P_3 +K_1$ or of $K_2+2K_1$. If $H_2$ is not an induced subgraph of $P_3 + K_1$, then it follows that $\overline{H_1}$ is an induced subgraph of $P_3 + K_1$ or of $K_2 + 2K_1$. Therefore, Case~\ref{case:triangle} and Case~\ref{case:p3+p1} apply respectively. Finally, if $H_2$ is an induced subgraph of $P_3 + K_1$, taking the complement of both $H_1$ and $H_2$ suffices for Case~\ref{case:triangle} to apply. The result follows.
\end{proof}

In the remainder of the paper, we prove results analogous to \cref{thm:main} for several important subclasses of perfect graphs. Observe that as we consider subclasses of perfect graphs, the sets $\mathcal{H}$ of forbidden induced subgraphs giving a positive answer must be contained in one of the cases of \cref{thm:main}. In some of the upcoming proofs we will consider these cases explicitly, whereas in others we choose to give a self-contained proof to facilitate a better understanding of the structures giving rise to the restrictions on the forbidden induced subgraphs.


\section{Bounded-apex-chordal graph classes} \label{sec:chordal}

A \emph{chordal graph} is a graph containing no hole, where a \emph{hole} is  an induced cycle on four or more vertices. In this section, we answer \cref{question:mainquestion} for chordal graphs, namely characterizing for which sets of graphs $\mathcal{H}$ with $|\mathcal{H}| \leq 2$ the class of $\mathcal{H}$-free graphs is bounded-apex-chordal. We begin by considering classes defined by a single forbidden induced subgraph $H$.

\begin{theorem}\label{thm:apex-chordal_characterization_single_isg}
    Let $H$ be a graph. The class of $H$-free graphs is bounded-apex-chordal if and only if $H$ is an induced subgraph of $P_3$.
\end{theorem}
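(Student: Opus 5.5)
For the forward direction, I will suppose the class of $H$-free graphs is bounded-apex-chordal. Every chordal graph is perfect, so the class is bounded-apex-perfect, and \cref{thm:ExcludingOne} gives that $H$ is an induced subgraph of $P_4$. It remains to rule out $H$ being $P_4$ itself, and also $2K_1+\cdots$ variants; the induced subgraphs of $P_4$ that are not induced subgraphs of $P_3$ are exactly $P_4$, $K_2+K_1$ and $2K_2$. Here $2K_2$ is not an induced subgraph of $P_4$, so the cases are $P_4$ and $K_2+K_1$ (the latter is $\overline{P_3}$).

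To rule these out, I will use the family $(c+1)C_4$. A hole $C_4$ is $P_4$-free, and it is the complete bipartite graph $K_{2,2}$, which is $\overline{P_3}$-free (that is, $(K_2+K_1)$-free). The disjoint union $(c+1)C_4$, however, contains $K_2+K_1$ across components. So I will instead take the complete multipartite join $\overline{(c+1)2K_2}$, namely the complete $(c+1)$-partite-like graph obtained by joining $c+1$ copies of $C_4=\overline{2K_2}$. This graph is $(K_2+K_1)$-free, since its complement $(c+1)2K_2$ is $P_3$-free. It contains $c+1$ vertex-disjoint induced $C_4$'s, so deleting any $c$ vertices leaves a hole, and it is not $c$-apex-chordal. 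For $P_4$, the graph $(c+1)C_4$ is $P_4$-free (each component is $C_4$, which is $P_4$-free) and likewise is not $c$-apex-chordal. Since induced subgraphs of $P_4$ containing $K_2+K_1$ or $P_4$ are covered by these, the forward direction follows: if $H$ is an induced subgraph of $P_4$ but not of $P_3$, then $H$ contains $K_2+K_1$ and is contained in the first family or equals $P_4$, and in either case we get a contradiction.

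For the converse, I only need to treat $H=P_3$, since a smaller $H$ gives a subclass. A $P_3$-free graph is a disjoint union of cliques, hence chordal, so the class is $0$-apex-chordal.

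The only delicate point is the case bookkeeping in the forward direction. The induced subgraphs of $P_4$ are $K_1$, $2K_1$, $K_2$, $K_2+K_1$, $P_3$ and $P_4$, and exactly $K_2+K_1$ and $P_4$ are not induced subgraphs of $P_3$. So the two constructions above suffice.
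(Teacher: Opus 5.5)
Your proof is correct and rests on the same decisive construction as the paper: the join of $c+1$ copies of $C_4$, which is $(K_2+K_1)$-free and hence also $P_4$-free, so your separate use of $(c+1)C_4$ to exclude $P_4$ is redundant. The only difference is bookkeeping. You obtain at once that $H$ is an induced subgraph of $P_4$ from Theorem~\ref{thm:ExcludingOne} and then enumerate the cases, whereas the paper uses Lemma~\ref{lem:LinForest} together with $(c+1)C_4$ to show that each component of $H$ is an induced subgraph of $P_3$, and then uses the join to force $H$ to be connected or $H\cong 2K_1$.
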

\begin{proof}  
    We first observe that if $H$ is an induced subgraph of $P_3$, the class of $H$-free graphs is chordal. Hence it suffices to show the forward direction of the statement.

    Let $c \in \mathbb{N}$ be given such that the class of $H$-free graphs is $c$-apex-chordal. We observe that the class of $H$-free graphs, therefore, does not contain $(c+1)C_4$ and $(c+1)C_5$. Thus, $H$ is an induced subgraph of both these graphs. Since bounded-apex-chordal graphs are bounded-apex-perfect, $H$ is a linear forest by \cref{lem:LinForest}. Thus, each component of $H$ is an induced subgraph of $P_3$. 
    Now consider the join of $c+1$ copies of $C_4$, that is, the graph consisting of $c+1$ copies of $C_4$, pairwise complete to one another. Since this construction is not $c$-apex-chordal, it contains $H$. If $H$ has at least two components, they are both contained in a single copy of $C_4$, and hence $H \cong 2K_1$, an induced subgraph of $P_3$. The statement follows.
\end{proof}

Now we turn to classes defined by two forbidden induced subgraphs. For the following proof, we use the following notion: a graph $G$ is said to be a \emph{co-matching} if it is contained in the complement of a matching. That is, there exists an $r \in \mathbb{N}$ such that $G$ is an induced subgraph of $\overline{rK_2}$. Note that the class of co-matchings is hereditary. We make use of the following two facts about co-matchings:

\begin{lemma}\label{lem:triangle-free-co-matching}
    If $G$ is a $K_3$-free co-matching, then $G$ is an induced subgraph of $C_4$.
\end{lemma}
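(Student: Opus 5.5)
We argue directly from the structure of $\overline{rK_2}$. Let $G$ be a $K_3$-free co-matching, and fix an $r$ and an identification of $G$ with an induced subgraph of $\overline{rK_2}$ on vertex set $V(G) \subseteq \{a_1, b_1, \ldots, a_r, b_r\}$. In this graph, $a_i b_i$ is a non-edge for each $i$, and every other pair of vertices is adjacent. Call $\{a_i, b_i\}$ the $i$-th \emph{pair}, and say that $V(G)$ \emph{meets} pair $i$ if it contains at least one of $a_i, b_i$.

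The key observation is that any two vertices lying in different pairs are adjacent in $G$. Hence if $V(G)$ meets three distinct pairs, then choosing one vertex of $V(G)$ from each of them gives three pairwise adjacent vertices, that is, a triangle. This is impossible since $G$ is $K_3$-free. Therefore $V(G)$ meets at most two pairs, say pairs $1$ and $2$ after relabelling, and so $V(G) \subseteq \{a_1, b_1, a_2, b_2\}$.

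Finally, $\overline{rK_2}$ restricted to $\{a_1, b_1, a_2, b_2\}$ is exactly $\overline{2K_2}$. Its edges are $a_1a_2, a_2b_1, b_1b_2, b_2a_1$, so this graph is $C_4$. Since $G$ is an induced subgraph of it, $G$ is an induced subgraph of $C_4$, as claimed.

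There is no real obstacle; the only point requiring care is the triangle argument. It must use vertices from three \emph{distinct} pairs, because two vertices from the same pair are non-adjacent and cannot both belong to a triangle.
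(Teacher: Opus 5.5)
Your proof is correct and follows essentially the same argument as the paper: one vertex from each of three distinct non-edges of the anti-matching would form a triangle, so $G$ lies within two non-edges, which together induce $\overline{2K_2} \cong C_4$. Your write-up is simply a more explicit version of the paper's three-line proof (the case $r \le 1$ is harmless, since one may always enlarge $r$).
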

\begin{proof}
    Let $r \in \mathbb{N}$ be given such that $G$ is an induced subgraph of $\overline{rK_2}$. Observe that taking one endpoint of three distinct non-edges in the anti-matching induces a copy of $K_3$. Hence, $G$ contains endpoints of at most two non-edges in the anti-matching. Two such non-edges together induce a copy of $C_4$. Hence, $G$ is an induced subgraph of $C_4$, as desired.
\end{proof}

\begin{lemma}\label{lem:diamond-free-co-matching}
    If $G$ is a $K_4^-$-free co-matching containing a copy of $K_3$, then $G$ is a complete graph.
\end{lemma}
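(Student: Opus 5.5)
Since $G$ is an induced subgraph of $\overline{rK_2}$, every vertex of $G$ has at most one non-neighbour in $G$. In other words, the non-edges of $G$ form a matching. The plan is to show that this non-matching is empty, using a triangle $T=\{a,b,c\}$ together with the absence of an induced $K_4^-$.

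Suppose for contradiction that $G$ has a non-edge $uv$. We then argue in two cases.

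\textbf{Case 1:} neither $u$ nor $v$ lies in $T$. The vertices $a,b,c$ each have at most one non-neighbour, and $a,b,c$ are pairwise adjacent. So if some vertex of $T$ is non-adjacent to $u$, that non-edge must be the unique non-edge at $u$, namely $uv$. This is impossible because $v\notin T$. Hence $u$ is complete to $T$, and likewise $v$ is complete to $T$. Then $\{u,v,a,b\}$ consists of $K_4$ minus the single edge $uv$, so it induces a copy of $K_4^-$, a contradiction.

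\textbf{Case 2:} the non-edge meets $T$. Its two ends cannot both lie in $T$, since $T$ is a clique. So without loss of generality $u=a$ and $v\notin T$. Because $a$'s only non-neighbour is $v$, and $b$ and $c$ have their own unique non-neighbours (if any) outside $\{a,v\}$ or equal to something else, we need to check the adjacency of $v$ to $b$ and $c$. The non-neighbour of $v$ is $a$, so $v$ is adjacent to both $b$ and $c$. Then $\{a,b,c,v\}$ induces $K_4$ minus the edge $av$, which is $K_4^-$, again a contradiction.

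In either case we obtain an induced $K_4^-$, so $G$ has no non-edges and is complete. There is no real obstacle here. The only care needed is the observation that every non-edge of a co-matching is the unique non-edge at each of its endpoints.
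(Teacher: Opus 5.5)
Your proof is correct and is essentially the paper's argument. The paper avoids your case split: the triangle contains at most one endpoint of the non-edge, so two of its vertices lie outside it, and the co-matching property makes that non-edge the only one among these four vertices, which therefore induce $K_4^-$.
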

\begin{proof}
    Suppose for the sake of contradiction that $G$ contains a non-edge with endpoints $x$ and $y$. Let $T \subset V(G)$ induce a copy of $K_3$. Then $T$ contains at most one of $x$ and $y$, and thus contains two distinct vertices $u,v \in V(G)\setminus \{x,y\}$. Since $G$ is a co-matching, the only non-edge in $G[\{x,y,u,v\}]$ has endpoints $x$ and $y$, that is, the set $\{x,y,u,v\}$ induces a copy of $K_4^-$, a contradiction.
\end{proof}

We are now ready to proceed to the case of forbidding two induced subgraphs.

\begin{theorem}
\label{thm:apex-chordal_characterization}
    Let $H_1$ and $H_2$ be graphs. The class of $\{H_1, H_2\}$-free graphs is bounded-apex-chordal if and only if -- up to switching the labels of $H_1$ and $H_2$ -- one of the following holds:
    \begin{enumerate}
        \item $H_1$ is an induced subgraph of $P_3$;
        \item $H_1$ is an induced subgraph of $P_4$ and $H_2 \cong C_4$, in which case $c=0$;
        \item $H_1 \cong 2K_2$ and $H_2 \cong C_4$; or
        \item $H_1$ is a complete graph and $H_2$ is edgeless.
    \end{enumerate}
\end{theorem}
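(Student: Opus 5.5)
The sufficiency direction follows from earlier results, case by case. If $H_1$ is an induced subgraph of $P_3$, then \cref{thm:apex-chordal_characterization_single_isg} applies. If $H_1$ is an induced subgraph of $P_4$ and $H_2\cong C_4$, then every hole of length at least $5$ contains $P_4$, so $\{H_1,C_4\}$-free graphs have no holes and are chordal; hence $c=0$. If $H_1\cong 2K_2$ and $H_2\cong C_4$, then by \cref{thm:2K2-C4-free-structure} deleting at most one vertex leaves a split graph. Split graphs are chordal: a hole would need two non-adjacent vertices in the stable side together with two non-adjacent vertices elsewhere, which forces $2K_2$ or a $C_4$ inside a clique plus stable set, and that is impossible. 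So $c=1$. For the case where $H_1$ is complete and $H_2$ is edgeless, Ramsey (\cref{thm:ramsey}) bounds $|V(G)|$, exactly as in \cref{em:rK_1_(rK_1)c-free}.

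For necessity, fix $c$ with $\{H_1,H_2\}$-free graphs $c$-apex-chordal. The plan is to exhibit several families that are not bounded-apex-chordal, and to record for each that one of $H_1,H_2$ must be an induced subgraph of some member. The families are:
\begin{enumerate}[(a)]
\item the disjoint unions $(c+1)C_4$;
\item the joins of $c+1$ copies of $C_4$, i.e.\ the complements of $(c+1)2K_2$;
\item the co-matchings $\overline{rK_2}$, which contain $\lfloor r/2\rfloor$ vertex-disjoint induced $C_4$'s, each formed by two non-edges;
\item the complete bipartite graphs $K_{n,n}$ with $n\ge 2c+2$, since any $c$ deletions leave a $C_4$;
\item the families from \cref{lem:c5s,lem:matchc5,lem:blowupC5}, together with their complements.
\end{enumerate}
Since bounded-apex-chordal implies bounded-apex-perfect, the pair is also one of the five cases of \cref{thm:main} up to complementing both graphs. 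However, chordality is not closed under complements, so both orientations must be examined.

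The induced subgraphs of these families can be described concretely.
\begin{itemize}
\item Linear forests in $(c+1)C_4$ have components that are induced subgraphs of $P_3$.
\item Induced subgraphs of $K_{n,n}$ are complete bipartite graphs or edgeless graphs.
\item Induced subgraphs of family (c) are co-matchings, which are controlled by \cref{lem:triangle-free-co-matching,lem:diamond-free-co-matching}.
\end{itemize}
I then go through the five cases of \cref{thm:main} in both orientations.

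\textbf{Case~\ref{case:p4}.} If $H_1\le P_4$ but $H_1\not\le P_3$, then $H_1\in\{P_4,\overline{P_3}\text{ variants}\}$ is neither $C_4$ nor contained in $(c+1)C_4$ or in family (b). So $H_2$ must lie in (a), (b), (c) and (d) simultaneously. Lying in (d) and (c) forces $H_2$ to be a triangle-free co-matching, hence $H_2\le C_4$ by \cref{lem:triangle-free-co-matching}. Lying in (a) and (b) then forces $H_2\cong C_4$ unless $H_2\le P_3$, giving outcome~2 or outcome~1.

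\textbf{Case~\ref{case:ramsey}.} This gives outcome~4 directly.

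\textbf{Case~\ref{case:c42p2}.} This gives outcome~3 directly. The complemented orientation gives the same pair.

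\textbf{Cases~\ref{case:triangle} and~\ref{case:p3+p1}.} Here one graph is an induced subgraph of the paw, of $K_3$, or of $\overline{K_2+2K_1}$, and the other is an induced subgraph of $P_3+rK_1$ or $K_2+2K_1$, in either orientation. Family (c) forces one of them to be a co-matching. By \cref{lem:diamond-free-co-matching} this co-matching is complete, or else by \cref{lem:triangle-free-co-matching} it is contained in $C_4$. Combining this with (a), (b) and (d) should collapse every surviving possibility to a clique paired with an edgeless graph (outcome~4), to a graph $\le P_3$ (outcome~1), or to a contradiction. For example, $\{K_3,P_3+rK_1\}$ fails on family (d) because $K_{n,n}$ contains $P_3+rK_1$ once $n\ge r+2$. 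Likewise the diamond-type pairs fail on (d) or on (b).

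I expect this last step to be the main obstacle. It is careful casework on small graphs: for each candidate pair one must either find the right obstruction family or confirm that the pair falls into outcomes~1--4. The risk is overlooking some small graph such as $K_2+K_1$ or $\overline{K_2+K_1}$, or missing a complemented orientation. I would first list all graphs on at most $4$ vertices that occur in the pairs of \cref{thm:main}, and test each against (a)--(d) in a table.
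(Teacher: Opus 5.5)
There is a genuine gap, and it is in the step you flagged yourself. With the obstruction families (a)--(e), the casework for Cases~\ref{case:triangle} and~\ref{case:p3+p1} of \cref{thm:main} cannot be finished.

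The pair $\{3K_1,K_4^-\}$ lies in Case~\ref{case:p3+p1}. The complemented Case~\ref{case:triangle} also produces $\{3K_1, K_s \text{ minus an edge}\}$ for every $s\ge 4$. None of these is one of outcomes 1--4, so each must be refuted.

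Your sentence ``by \cref{lem:diamond-free-co-matching} this co-matching is complete, or else it is contained in $C_4$'' assumes the co-matching is $K_4^-$-free or triangle-free. But $K_4^-$ is itself a co-matching that is neither. To exclude it you need a family that is $3K_1$-free, $K_4^-$-free and not bounded-apex-chordal. Every family on your list contains $3K_1$ or $K_4^-$ once $c\ge 1$:
\begin{itemize}
\item the joins of $C_4$'s, the co-matchings, $\overline{(c+1)C_5}$ and the clique-blowups of $C_5$ all contain $K_4^-$;
\item $(c+1)C_4$, $K_{n,n}$, $(c+1)C_5$, the graphs of \cref{lem:matchc5} and their complements, and the stable-set blowups of $C_5$ all contain $3K_1$.
\end{itemize}
This is not an accident. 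The class of $\{3K_1,K_4^-\}$-free graphs is bounded-apex-perfect, so no obstruction built from $C_5$'s can work, and a new construction whose holes are $C_4$'s is required.

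The paper's construction takes four cliques $K_{(1)},\dots,K_{(4)}$ of size $c+1$. It makes $K_{(1)}$ complete to $K_{(2)}$ and $K_{(3)}$ complete to $K_{(4)}$, and adds perfect matchings between $K_{(2)}$ and $K_{(3)}$ and between $K_{(1)}$ and $K_{(4)}$. This graph has three properties:
\begin{itemize}
\item it is covered by two cliques, so it is $3K_1$-free;
\item its matching edges lie in no triangle, so it is $K_4^-$-free;
\item it has $c+1$ disjoint induced $C_4$'s.
\end{itemize}
Only with such a graph does \cref{lem:diamond-free-co-matching} apply and force the co-matching to be complete.

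Two of your sample checks invert the logic: a family refutes a pair only if its members contain \emph{neither} graph. $K_{n,n}$ refutes $\{K_3,P_3+rK_1\}$ for $r\ge 1$ precisely because it does \emph{not} contain $P_3+rK_1$, since every induced subgraph of $K_{n,n}$ is complete bipartite or edgeless. Likewise, ``the diamond-type pairs fail on (d) or on (b)'' is false for $\{K_4^-,3K_1\}$: $K_{n,n}$ contains $3K_1$, and the join of two $C_4$'s contains $K_4^-$.

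A minor slip: $K_2+K_1$ does lie in $(c+1)C_4$ for $c\ge 1$. Your Case~\ref{case:p4} argument survives anyway, because (c) and (d) alone give that $H_2$ is an induced subgraph of $C_4$.

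Finally, routing through \cref{thm:main} is unnecessary. The paper argues directly:
\begin{itemize}
\item $(c+1)C_\ell$ forces one graph to be a linear forest;
\item joins of $C_4$'s force the other to be a co-matching;
\item then either $H_2\cong C_4$, and the clique-blowup of $C_5$ finishes;
\item or $H_2$ contains $K_3$, so $K_{c+2,c+2}$ forces $H_1\cong rK_1$, and the four-clique graph above finishes.
\end{itemize}
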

\begin{proof}
    We first show that if $H_1, H_2$ satisfy one of the cases, then $\{H_1, H_2\}$-free graphs are bounded-apex-chordal. 
    \begin{enumerate}
        \item Follows from \cref{thm:apex-chordal_characterization_single_isg}.
        \item If $H_1$ is an induced subgraph of $P_4$ and $H_2 \cong C_4$, then each $\{H_1, H_2\}$-free graph is chordal, since any hole is either $C_4$ or contains an induced $P_4$.
        \item By \cref{thm:PseudoSplit}, the class of $\{2K_2, C_4\}$-free graphs is exactly the class of pseudo-split graphs. Since split graphs are chordal, by \cref{thm:2K2-C4-free-structure}, the class of pseudo-split graphs is $1$-apex-chordal.
        \item Once again, this is immediate from Ramsey's Theorem, \cref{thm:ramsey}.
    \end{enumerate}

    For the forward direction, suppose that $c \in \mathbb{N}$ is given such that the class of $\{H_1, H_2\}$-free graphs is $c$-apex-chordal. 

    \begin{claim}
        If the class of $\{H_1, H_2\}$-free graphs is $c$-apex-chordal, at least one of $H_1$ and $H_2$ is a linear forest.
    \end{claim}
    \begin{subproof}
        Suppose that neither $H_1$ nor $H_2$ is a linear forest. That is, $H_1$ and $H_2$ each contain either a cycle or a vertex of degree at least $3$. Let $l > |V(H_1)|, |V(H_2)|$ be an integer. We observe that the graph $(c+1)C_l$ contains neither $H_1$ nor $H_2$ as it contains no vertices of degree at least $3$ and contains no cycles of size at most $\max(|V(H_1)|, |V(H_2)|)$. However, it is also not $c$-apex-chordal, a contradiction. Hence, at least one of the graphs $H_1$ and $H_2$ is a linear forest, as desired.
    \end{subproof}
    
     Without loss of generality, let $H_1$ be a linear forest.
     The case where $H_1$ or $H_2$ is an induced subgraph of $P_3$ corresponds to Case~1 of \cref{thm:apex-chordal_characterization} (up to switching the labels of $H_1$ and $H_2$). Hence, we assume that neither $H_1$ nor $H_2$ is an induced subgraph of $P_3$. 

     \begin{claim}\label{claim:bounded-apex-chordal-linear-forest-not-P3-implies-co-matching}
         If the class of $\{H_1, H_2\}$-free graphs is $c$-apex-chordal and $H_1$ is a linear forest but not an induced subgraph of $P_3$, then $H_2$ is a co-matching.
     \end{claim}
     \begin{subproof}
        We consider the graph obtained by taking $c+1$ copies of $C_4$ and making them complete to one another. We note that the only linear forests contained in this construction are induced subgraphs of $P_3$ and hence this construction does not contain $H_1$. Moreover, as it contains $c+1$ disjoint copies of $C_4$, it is not $c$-apex-chordal either and thus contains $H_2$. The claim follows from the fact that the construction is itself a co-matching and co-matchings are hereditary.
     \end{subproof}

     Hence, by \cref{claim:bounded-apex-chordal-linear-forest-not-P3-implies-co-matching}, we assume that $H_2$ is a co-matching. 
     First, suppose that $H_2$ is $K_3$-free. Then by \cref{lem:triangle-free-co-matching}, $H_2$ is an induced subgraph of $C_4$. 
     Also, since $H_2$ is not an induced subgraph of $P_3$, we obtain that $H_2 \cong C_4$. Consider the $K_{c+1}$-blowup of $C_5$. This graph is not $c$-apex-chordal by virtue of containing $c+1$ disjoint copies of $C_5$. Since this graph is $C_4$-free, it contains $H_1$. The only linear forests contained in this graph are induced subgraphs of $P_4$ and $2K_2$. Hence, either $H_1$ is an induced subgraph of $P_4$, which corresponds to Case~2, or $H_1 \cong 2K_2$, which corresponds to Case~3 of \cref{thm:apex-chordal_characterization}. Thus, we assume that $H_2$ contains a copy of $K_3$.  
     
     Consider the graph $K_{c+2, c+2}$, which is not $c$-apex-chordal and thus contains $H_1$ or $H_2$. 
     Because it is bipartite, it is $K_3$-free and therefore contains $H_1$. The only linear forests contained in $K_{c+2,c+2}$ are induced subgraphs of $P_3$ and stable sets. Hence, we may assume that $H_1 \cong rK_1$ for some $r \geq 3$. 
     
     Finally, consider the graph consisting of four disjoint cliques $K_{(1)}, K_{(2)},K_{(3)},K_{(4)}$ of size $c+1$, where $K_{(1)}$ is complete to $K_{(2)}$ and $K_{(3)}$ is complete to $K_{(4)}$, and there is a perfect matching between $K_{(2)}$ and $K_{(3)}$ and a perfect matching between $K_{(1)}$ and $K_{(4)}$. This graph is not $c$-apex-chordal, by virtue of containing $c+1$ disjoint copies of $C_4$. Moreover, it has stability number $2$ and hence does not contain $H_1$. It therefore contains $H_2$. Additionally, it is $K_4^-$-free, implying that $H_2$ is also $K_4^-$-free. 
     Then, by \cref{lem:diamond-free-co-matching}, we conclude that $H_2$ is a complete graph, which corresponds to Case 4 of \cref{thm:apex-chordal_characterization} after relabeling $H_1$ and $H_2$.
\end{proof}






\section{Bounded-apex-interval graph classes}\label{sec:interval}

In this section, we answer \cref{question:mainquestion} for interval graphs. An \emph{interval graph} is the intersection graph of a finite set of intervals of $\mathbb{R}$. 
In 1962, Lekkerkerker and Boland~\cite{Lekkerkerker1962representation} characterized the class of interval graphs via a family of forbidden induced subgraphs, shown in \cref{fig:forbidden_isg_interval_graphs}. They also produced another characterization of interval graphs using the following definition. An \emph{asteroidal triple} is a set of three pairwise non-adjacent vertices such that each pair is joined by a path disjoint from the third vertex and its neighbourhood. A graph is said to be \emph{AT-free} if it does not contain an asteroidal triple. Lekkerkerker and Bolan~\cite{Lekkerkerker1962representation} proved that interval graphs are precisely the graphs that are both chordal and AT-free. We will make use of our characterizations of bounded-apex-chordal classes from \cref{sec:chordal}.

\begin{figure}[h!]\label{fig:interval}
    \centering

\begin{tikzpicture}[
    dot/.style={circle,fill=black,inner sep=1.8pt},
    every node/.style={font=\small}
]

\begin{scope}[shift={(0,0)}]
    \coordinate (a) at (0,1.3);
    \coordinate (b) at (1.6,1.3);
    \coordinate (c) at (1.6,0);
    \coordinate (d) at (0,0);

    \draw (a)--(b)--(c)--(d)--(a);

    \node[dot,label=above:{$1$}] at (a) {};
    \node[dot,label=above:{$2$}] at (b) {};
    \node[dot,label=below:{$\dots$}] at (c) {};
    \node[dot,label=below:{$n$}] at (d) {};

    \node at (0.8,-0.8) {$\left(n>3\right)$};
\end{scope}

\begin{scope}[shift={(5.5,0.1)}]
    \node[dot] (t2) at (0,1.8) {};
    \node[dot] (t1) at (0,0.9) {};
    \node[dot] (r)  at (0,0)   {};

    \node[dot] (l1) at (-0.7,-0.7) {};
    \node[dot] (l2) at (-1.4,-1.4) {};
    \node[dot] (m1) at (0.7,-0.7) {};
    \node[dot] (m2) at (1.4,-1.4) {};

    \draw (t2)--(t1)--(r);
    \draw (r)--(l1)--(l2);
    \draw (r)--(m1)--(m2);

\end{scope}

\begin{scope}[shift={(9.2,0.15)}]
    \node[dot] (b1) at (0,0) {};
    \node[dot] (b2) at (0.9,0) {};
    \node[dot] (b3) at (1.8,0) {};
    \node[dot] (b4) at (2.7,0) {};
    \node[dot] (b5) at (3.6,0) {};

    \node[dot] (u)  at (1.8,0.95) {};
    \node[dot] (p)  at (1.8,-0.95) {};

    \draw (b1)--(b2)--(b3)--(b4)--(b5);

    \draw (u)--(b1);
    \draw (u)--(b2);
    \draw (u)--(b3);
    \draw (u)--(b4);
    \draw (u)--(b5);

    \draw (u)--(p);
\end{scope}

\begin{scope}[shift={(0,-4.0)}]
    \node[dot] (b1) at (0,0) {};
    \node[dot,label=below:{$1$}] (b2) at (0.9,0) {};
    \node[dot,label=below:{$2$}] (b3) at (1.8,0) {};
    \node[dot] (b4) at (2.7,0) {};
    \node[dot] (b5) at (3.6,0) {};
    \node[dot,label=below:{$n$}] (b6) at (4.5,0) {};
    \node[dot] (b7) at (5.4,0) {};

    \node[dot] (u) at (2.7,1.05) {};
    \node[dot] (t) at (2.7,1.95) {};

    \draw (b1)--(b2)--(b3)--(b4)--(b5)--(b6)--(b7);

    \draw (u)--(t);

    \foreach \x in {b2,b3,b4,b5,b6}
        \draw (u)--(\x);

    \node at (2.7,-0.35) {$\cdots$};
    \node at (2.7,-1.0) {$\left(n>1\right)$};
\end{scope}

\begin{scope}[shift={(8.2,-4.0)}]
    \node[dot] (b1) at (0,0) {};
    \node[dot,label=below:{$1$}] (b2) at (0.9,0) {};
    \node[dot,label=below:{$2$}] (b3) at (1.8,0) {};
    \node[dot] (b4) at (2.7,0) {};
    \node[dot] (b5) at (3.6,0) {};
    \node[dot,label=below:{$n$}] (b6) at (4.5,0) {};
    \node[dot] (b7) at (5.4,0) {};

    \node[dot] (u1) at (1.9,1.05) {};
    \node[dot] (u2) at (3.5,1.05) {};
    \node[dot] (a)  at (2.7,2.0)  {};

    \draw (b1)--(b2)--(b3)--(b4)--(b5)--(b6)--(b7);

    \draw (a)--(u1)--(u2)--(a);

    \draw (u1)--(u2);

    \foreach \x in {b1,b2,b3,b4,b5,b6,b7}{
        \draw (u1)--(\x);
        \draw (u2)--(\x);
    }

    \node at (2.7,-0.35) {$\cdots$};
    \node at (2.7,-1.0) {$\left(n>0\right)$};
\end{scope}

\end{tikzpicture}

    \caption{Forbidden induced subgraphs of interval graphs (see \cite{Lekkerkerker1962representation}).} 
    \label{fig:forbidden_isg_interval_graphs}
\end{figure}

Our first result concerns the case where we exclude a single graph.
\begin{theorem}\label{thm:apex-interval_characterization_single_isg}
    Let $H$ be a graph. The class of $H$-free graphs is bounded-apex-interval if and only if $H$ is an induced subgraph of $P_3$.
\end{theorem}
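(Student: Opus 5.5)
The reverse direction is immediate. If $H$ is an induced subgraph of $P_3$, then $H$ is one of $K_1$, $2K_1$, $K_2$, $K_2+K_1$ or $P_3$. Each $H$-free graph is then a disjoint union of cliques: a $P_3$-free graph is such a union, and the $K_1$-free, $2K_1$-free and $K_2$-free graphs are (degenerate) special cases of it. A disjoint union of cliques is an interval graph, since we may assign to each clique a single point interval, using distinct points for distinct cliques. The one case that is not of this form is $H \cong K_2+K_1$: the $(K_2+K_1)$-free graphs are exactly the complete multipartite graphs. These contain $C_4 \cong K_{2,2}$, so they are not even chordal, let alone interval. I would therefore not claim that they are $0$-apex-interval. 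Instead I would reduce this case to \cref{thm:apex-chordal_characterization_single_isg}, whose statement already asserts that $(K_2+K_1)$-free graphs are bounded-apex-chordal, and then argue directly that they are bounded-apex-interval.

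Concretely, let $G$ be complete multipartite with parts $V_1,\dots,V_k$. If at least two parts have size at least $2$, then $G$ contains $C_4$. Take $S$ to be the union of all parts except the largest one; what remains is a stable set, which is interval. However, $|S|$ is unbounded, so this choice alone does not work. This suggests that $K_2+K_1$ is actually a genuine exception. Indeed, the join of $c+1$ copies of $2K_1$ is $K_{2,2,\dots,2}$; it is $(K_2+K_1)$-free and contains $c+1$ disjoint copies of $C_4$, so it is not $c$-apex-chordal. Note that this same construction was used in the proof of \cref{thm:apex-chordal_characterization_single_isg}. Because of this, I would read the statement, exactly as the chordal one, with ``induced subgraph of $P_3$'' covering the cases actually forced by that argument. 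When writing up, I would double-check that the join-of-$C_4$ argument in the chordal proof does rule out $K_2+K_1$. It does: if $H$ has two components lying in a single copy of $C_4$, then $H$ is either $2K_1$ or $K_2+K_1$. But $K_2+K_1$ is not induced in $C_4$, so the argument leaves only $2K_1$. Hence the intended sufficient list for both theorems is presumably $K_1$, $2K_1$, $K_2$, $P_3$, and possibly the complements relevant here, and I would state the sufficiency direction for these graphs exactly: every such $H$-free graph is a disjoint union of cliques or is complete, and hence is interval.

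For the forward direction I would invoke \cref{thm:apex-chordal_characterization_single_isg} directly. Interval graphs are chordal, so every $c$-apex-interval class is $c$-apex-chordal. Hence if the class of $H$-free graphs is bounded-apex-interval, it is bounded-apex-chordal, and $H$ is an induced subgraph of $P_3$ by that theorem. No new construction is needed. The only delicate step is the bookkeeping above about which small graphs truly give interval classes; the main obstacle is to make sure the sufficiency list matches exactly what the chordal theorem permits.
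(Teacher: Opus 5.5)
Your final argument is correct, but the detour in your first two paragraphs rests on a false claim. $K_2+K_1$ is not an induced subgraph of $P_3$. The only three-vertex induced subgraph of $P_3$ is $P_3$ itself, and its two-vertex induced subgraphs are $K_2$ (twice) and $2K_1$. So the induced subgraphs of $P_3$ are exactly $K_1$, $2K_1$, $K_2$, $P_3$ (and the null graph); $K_2+K_1$ is $\overline{P_3}$, not a subgraph of it.

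Consequently there is no ``genuine exception'', and the statement needs no reinterpretation. Likewise, \cref{thm:apex-chordal_characterization_single_isg} asserts nothing about $(K_2+K_1)$-free graphs; your misreading of it has the same source. Your observation that complete multipartite graphs are not bounded-apex-chordal is true, but it is consistent with both theorems rather than in tension with them. You should also drop the hedge ``and possibly the complements relevant here''. The complements of $K_1, 2K_1, K_2$ are already in the list, and the only new one, $\overline{P_3}=K_2+K_1$, is exactly the case you showed fails.

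There is also a small counting slip. The join of $c+1$ copies of $2K_1$ has only $c+1$ parts of size two and is $c$-apex-chordal. The construction in the chordal proof is the join of $c+1$ copies of $C_4$, that is, of $2c+2$ copies of $2K_1$.

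With the detour deleted, what remains is a complete proof. Your forward direction is exactly the paper's: interval graphs are chordal, so a bounded-apex-interval class is bounded-apex-chordal, and \cref{thm:apex-chordal_characterization_single_isg} applies.

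For sufficiency your route differs slightly. It suffices to treat $H=P_3$, since any $H$-free graph with $H$ induced in $P_3$ is $P_3$-free. You then note that $P_3$-free graphs are disjoint unions of cliques, which are interval graphs (one interval per clique, pairwise disjoint). The paper instead observes that every Lekkerkerker--Boland obstruction contains $P_3$. Your argument is more elementary and self-contained and avoids the forbidden-subgraph characterization; the paper's is shorter given that characterization, which it needs later anyway.
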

\begin{proof}
    First, as each of the forbidden induced subgraphs of interval graphs contains $P_3$ (see \cref{fig:forbidden_isg_interval_graphs}), $P_3$-free graphs are interval graphs.
    Conversely, by \cref{thm:apex-chordal_characterization_single_isg}, if $H$ is not an induced subgraph of $P_3$, then the class of $H$-free graphs is not bounded-apex-chordal, and hence not bounded-apex-interval.
\end{proof}

Our second result for bounded-apex-interval graphs concerns the case where we exclude two forbidden induced subgraphs.

\begin{theorem}\label{thm:apex-interval_characterization}
    Let $H_1, H_2$ be graphs. The class of $\{H_1, H_2\}$-free graphs is bounded-apex-interval if and only if -- up to switching the labels of $H_1$ and $H_2$ -- one of the following holds:
    \begin{enumerate}
        \item $H_1$ is an induced subgraph of $P_3$;
        \item $H_1$ is an induced subgraph of $P_4$ and $H_2 \cong C_4$, in which case $c=0$; or
        \item $H_1$ is a complete graph and $H_2$ is edgeless.
    \end{enumerate}
\end{theorem}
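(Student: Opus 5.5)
Since interval graphs are chordal, every bounded-apex-interval class is bounded-apex-chordal. So the plan is to start from \cref{thm:apex-chordal_characterization}, which restricts $\{H_1,H_2\}$ to its four cases. The task then splits into two parts: show that Cases 1, 2 and 4 of that theorem survive for interval graphs, and show that Case 3 ($\{2K_2, C_4\}$) fails. The failure of Case 3 is exactly why it is absent from the statement.

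For sufficiency I would treat the three cases in turn.
\begin{enumerate}
\item If $H_1$ is an induced subgraph of $P_3$, this follows directly from \cref{thm:apex-interval_characterization_single_isg}.
\item If $H_1$ is an induced subgraph of $P_4$ and $H_2\cong C_4$, I claim every $\{H_1,H_2\}$-free graph is already interval, so $c=0$. Each graph in \cref{fig:forbidden_isg_interval_graphs} is either $C_4$, or contains an induced $P_4$ (or a longer induced path or hole containing $P_4$). For the tent-like graphs I would simply read off an induced $P_4$; for example, in the last family a path vertex, a central vertex, the apex, and then the other central vertex do not form one, but two adjacent path vertices plus suitable others do. This check is easy but tedious. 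Hence $P_4$-free and $C_4$-free together rule out all obstructions.
\item If $H_1$ is complete and $H_2$ is edgeless, Ramsey's theorem (\cref{thm:ramsey}) bounds $|V(G)|$, and deleting all vertices gives the bound.
\end{enumerate}

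For necessity, suppose the class is $c$-apex-interval. It is then $c$-apex-chordal, so one of the four cases of \cref{thm:apex-chordal_characterization} holds. It remains to exclude Case 3 when it is not already covered: $H_1\cong 2K_2$ and $H_2\cong C_4$, where neither is an induced subgraph of $P_3$ or $P_4$. I need a family of $\{2K_2,C_4\}$-free graphs that are not $c$-apex-interval.

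The main obstacle is finding such a split graph with many disjoint interval obstructions, and my candidate is the following. Take a clique $K$ and a stable set $S$, and let $G$ consist of $c+1$ disjoint copies of the net-like structure realised inside a split graph. Since $2K_2$-freeness forbids the disjoint union of those copies, I would glue them via a common clique. Concretely, let $K$ be a clique of size $3(c+1)$, partitioned into triples $\{a_i,b_i,d_i\}$. For each $i$, add stable vertices $x_i$ adjacent to $a_i,b_i$, $y_i$ adjacent to $b_i,d_i$, and $z_i$ adjacent to $a_i,d_i$. This gives a split graph; split graphs are $\{2K_2,C_4,C_5\}$-free, so $G$ is $\{2K_2,C_4\}$-free. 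Each triple together with $x_i,y_i,z_i$ induces the tent (the $n=1$ member of the last family in \cref{fig:forbidden_isg_interval_graphs}, equivalently an asteroidal triple $x_i,y_i,z_i$). The $c+1$ tents are vertex-disjoint, so deleting $c$ vertices leaves one intact and leaves a non-interval graph. I should double-check that $\{x_i,y_i,z_i\}$ remains an asteroidal triple in the induced copy; it does, because the copy is exactly the tent. Alternatively, I could use the three-pendant star configuration, and I would verify whichever is cleaner. This shows the $\{2K_2,C_4\}$-free graphs are not bounded-apex-interval, which completes the characterisation.
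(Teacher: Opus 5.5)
Your proposal is correct and follows the same route as the paper: pass through the chordal characterization, check that its Cases 1, 2 and 4 survive (via the single-graph theorem, the fact that every Lekkerkerker--Boland obstruction other than $C_4$ contains an induced $P_4$, and Ramsey), and rule out $\{2K_2,C_4\}$ with a split graph containing many asteroidal triples. The only difference is the witness graph. The paper deletes a perfect matching from the complete split graph with $|K|=|S|=c+3$ and notes that any three surviving matched pairs span a tent, giving an asteroidal triple. Your $c+1$ vertex-disjoint tents sharing one clique make the hitting argument immediate, at the cost of a slightly larger graph.
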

\begin{proof}
    We first show that if $H_1, H_2$ satisfy one of the cases, then $\{H_1, H_2\}$-free graphs are bounded-apex-interval.
    \begin{enumerate}
        \item Follows from \cref{thm:apex-interval_characterization_single_isg}. 
        \item Observe that by excluding $P_4$ and $C_4$, we exclude all forbidden induced subgraphs of interval graphs (see \cref{fig:forbidden_isg_interval_graphs}). Hence, the class of $\{P_4, C_4\}$-free graphs, and any of its subclasses, are a subclass of interval graphs and thus bounded-apex-interval for $c=0$.
        \item Once again, this is immediate from Ramsey's \cref{thm:ramsey}.
    \end{enumerate}
        
    Conversely, suppose that the class of $\{H_1, H_2\}$-free graphs is $c$-apex-interval for some $c \in \mathbb{N}$. 
    Since interval graphs are a subclass of chordal graphs, it suffices to consider the four cases of \cref{thm:apex-chordal_characterization}:
    \begin{enumerate}
        \item The case where $H_1$ is an induced subgraph of $P_3$ corresponds to Case 1 of \cref{thm:apex-interval_characterization}.
        \item The case where $H_1$ is an induced subgraph of $P_4$ and $H_2 \cong C_4$ corresponds to Case 2 of \cref{thm:apex-interval_characterization}. 
        \item Consider the case where $H_1 \cong 2K_2$ and $H_2 \cong C_4$. 
        For $c \in \mathbb{N}$, let $G_c$ with $V(G_c) = K\cup S$ be the complete split graph (that is, every vertex in $S$ is adjacent to every vertex of $K$) with $|K| = |S| = c+3$. We claim that the graph resulting from removing a perfect matching between $K$ and $S$ is not $c$-apex-interval. 

        Observe that any three distinct $v_1, v_2, v_3$ in the stable set form an asteroidal triple: they form a stable set, and for any two $v_i, v_j$, with $v_k$ being the third vertex, there is a path from $v_i$ to $v_j$ going through the unique non-neighbour of $v_k$ in the clique. The deletion of at most $c$ vertices leaves at least three vertices in the stable set with their corresponding unique non-neighbour in the clique, and thus the resulting graph is not AT-free, and in particular not an interval graph. Hence the considered complete split graph is not $c$-apex-interval, as desired.

        Note that the class of $\{H_1,H_2\}$-free graphs contains this construction and is thus not bounded-apex-interval.
        
        \item The case where $H_1$ is a complete graph and $H_2$ is edgeless corresponds to Case 4 of \cref{thm:apex-interval_characterization}.
    \end{enumerate}    
    This completes the forward direction of \cref{thm:apex-interval_characterization}. The result follows.
\end{proof}

\section{Bounded-apex-split graph classes}\label{sec:split}

In this section, we answer \cref{question:mainquestion} for split graphs. In 1977, Foldes and Hammer~\cite{split-graph-forbidden-induced-subgraphs} characterized split graphs as exactly the class of $\{C_4, C_5, 2K_2\}$-free graphs. Note that the class of split graphs is thus self-complementary.

\begin{theorem}\label{thm:apex-split-H-free}
    Let $H$ be a graph. The class of $H$-free graphs is bounded-apex-split if and only if $H$ is an induced subgraph of $K_2$ or of $2K_1$.
\end{theorem}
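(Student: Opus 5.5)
The plan is to prove the two directions separately, leaning on \cref{thm:apex-chordal_characterization_single_isg} and on the self-complementarity of split graphs.

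\emph{Backward direction.} Suppose $H$ is an induced subgraph of $K_2$ or of $2K_1$. Then every $H$-free graph is either edgeless or complete. Both kinds of graph are split graphs, since a stable set is split with $K=\emptyset$ and a clique is split with $S=\emptyset$. Hence the class is $0$-apex-split.

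\emph{Forward direction.} Suppose the class of $H$-free graphs is $c$-apex-split. Split graphs are chordal, so the class is also bounded-apex-chordal. By \cref{thm:apex-chordal_characterization_single_isg}, $H$ is an induced subgraph of $P_3$. Split graphs are closed under complementation, so the class of $\overline{H}$-free graphs, which consists of the complements of the $H$-free graphs, is also $c$-apex-split. Applying the same theorem to it gives that $\overline{H}$ is an induced subgraph of $P_3$, so $H$ is an induced subgraph of $\overline{P_3}=K_2+K_1$.

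The induced subgraphs common to $P_3$ and $K_2+K_1$ are $K_1$, $K_2$, $2K_1$, $K_2+K_1$ and the null graph. The graph $K_2+K_1$ must be excluded separately, and this is the one genuinely new step. I would use the construction $(c+1)2K_2$, the disjoint union of $c+1$ copies of $2K_2$, which equals $(2c+2)K_2$. Its components are single edges, so it contains no $P_3$ and hence no $\overline{K_2+K_1}$. Deleting any $c$ vertices from it leaves at least two untouched edges, which induce a $2K_2$, so it is not $c$-apex-split by the Foldes--Hammer characterization. Its complement is therefore $(K_2+K_1)$-free, and it is not $c$-apex-split either, because split graphs are self-complementary. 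This rules out $H\cong K_2+K_1$.

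The main care needed is exactly this last exclusion, since the chordal result alone does not rule out $K_2+K_1$. The construction above settles it.
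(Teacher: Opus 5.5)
Your proof is correct, but it takes a different route from the paper's. The paper argues through perfection. Since bounded-apex-split classes are bounded-apex-perfect, \cref{thm:ExcludingOne} gives that $H$ is an induced subgraph of $P_4$. It then uses the explicit constructions $(c+2)K_2$ and $\overline{(c+2)K_2}$ to force $H$ into some $rK_2$ and into some co-matching, and intersects the three families. You argue through chordality instead: you apply \cref{thm:apex-chordal_characterization_single_isg} to both $H$ and $\overline{H}$, using the self-complementarity of split graphs. Your route needs no new construction and is arguably cleaner. The paper's route depends only on the perfect-graph results rather than on the chordal section.

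One correction to your write-up: your list of common induced subgraphs of $P_3$ and $K_2+K_1$ is wrong. $K_2+K_1$ is not an induced subgraph of $P_3$, since the only three-vertex induced subgraph of $P_3$ is $P_3$ itself. The intersection is therefore already just $K_1$, $K_2$, $2K_1$ (and the null graph). Your remark that the chordal result does not rule out $K_2+K_1$ is backwards: the chordal result alone excludes $K_2+K_1$, and what your complementation step excludes is $P_3$. Your final argument with $\overline{(2c+2)K_2}$ is valid, essentially the paper's co-matching construction, but it is redundant in your proof.
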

\begin{proof}
    First, if $H$ is an induced subgraph of $K_2$, every $H$-free graph is stable and hence split. Similarly, if $H$ is an induced subgraph of $2K_1$, every $H$-free graph is complete and thus split.
    
    Now assume the class of $H$-free graphs is bounded-apex-split. Since bounded-apex-split graphs are also bounded-apex-perfect, by \cref{thm:ExcludingOne}, $H$ is an induced subgraph of $P_4$ for the class of $H$-free graphs to be bounded-apex-split. Next, observe that for any $c \in \mathbb{N}$ the graph $(c+2)K_2$ is not $c$-apex-split. Thus, $H$ is an induced subgraph of $rK_2$ for some $r \in \mathbb{N}$. Analogously, the complementary construction given by $\overline{(c+2)K_2}$ for $c \in \mathbb{N}$ (that is, graphs resulting from removing a perfect matching from a complete graph) is not $c$-apex-split. The only graphs that are induced subgraphs of $P_4$, $rK_2$, and $\overline{rK_2}$ for some $r \in \mathbb{N}$ are induced subgraphs of $K_2$ or of $2K_1$. The result follows.
\end{proof}

Next, we characterize the hereditary bounded-apex-split graph classes defined by two forbidden induced subgraphs.

\begin{theorem}
    \label{thm:apexsplit}
    Let $H_1$ and $H_2$ be graphs. The class of $\{H_1, H_2\}$-free graphs is $c$-apex-split if and only if -- up to switching the labels of $H_1$ and $H_2$ -- one of the following holds: 
    \begin{enumerate}
        \item $H_1$ is an induced subgraph of $K_2$ or of $2K_1$, in which case $c = 0$;
        \item $H_1$ is an induced subgraph of $C_4$ and $H_2$ is an induced subgraph of $2K_2$, in which case $c \leq 1$;
        \item $H_1$ is a complete graph and $H_2$ is edgeless.
    \end{enumerate}
\end{theorem}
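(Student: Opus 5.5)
We prove the two directions separately. For sufficiency, Case~1 follows from \cref{thm:apex-split-H-free}: $K_2$-free graphs are edgeless and $2K_1$-free graphs are complete, so both are split and $c=0$. For Case~2, an $\{H_1,H_2\}$-free graph is $\{C_4,2K_2\}$-free, and \cref{thm:2K2-C4-free-structure} gives directly that deleting at most one vertex leaves a split graph, so $c\le 1$. Case~3 follows from Ramsey's theorem (\cref{thm:ramsey}) exactly as in \cref{em:rK_1_(rK_1)c-free}: the graphs have fewer than $R(a,b)$ vertices, so they are trivially bounded-apex-split.

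For necessity, suppose $\{H_1,H_2\}$-free graphs are $c$-apex-split. Since split graphs are chordal, the class is bounded-apex-chordal, so by \cref{thm:apex-chordal_characterization} one of its four cases holds. Its Cases~3 and~4 are our Cases~2 and~3, so only two situations remain.

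\emph{First case: $H_1$ is an induced subgraph of $P_4$ and $H_2\cong C_4$.}
\begin{itemize}
\item If $H_1\cong 2K_1$ or $H_1\cong K_2$ (or $H_1$ is smaller), we are in Case~1.
\item If $H_1\in\{P_3,K_2+K_1,P_4\}$, take $G=(c+2)K_2$. It is $C_4$-free and contains no $P_3$, no $K_2+K_1$, and no $P_4$. It is also not $c$-apex-split, since at least two disjoint edges survive and form a $2K_2$. Hence no such $H_1$ works.
\end{itemize}

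\emph{Second case: $H_1$ is an induced subgraph of $P_3$ with $H_2$ arbitrary.} By \cref{thm:apex-split-H-free} we may assume $H_1\in\{P_3,K_2+K_1\}$ and neither graph is an induced subgraph of $K_2$ or $2K_1$. We use two constructions; each is not $c$-apex-split, so it must contain $H_2$.
\begin{itemize}
\item $(c+2)K_2$ is $P_3$-free. It is not $K_2+K_1$-free, so we also use $\overline{(c+2)K_2}$, which is $K_2+K_1$-free but not $P_3$-free.
\item $(c+2)K_{c+2}$, a disjoint union of cliques, is $P_3$-free. It is not $c$-apex-split, because deleting $c$ vertices leaves two disjoint cliques of size at least $2$, giving a $2K_2$.
\item The complete multipartite graph $K_{c+2,\dots,c+2}$ with $c+2$ parts is $K_2+K_1$-free. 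Its complement is $(c+2)K_{c+2}$, so it contains $C_4$ after any $c$ deletions and is not $c$-apex-split.
\end{itemize}

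If $H_1=P_3$, then $H_2$ is an induced subgraph of both $(c+2)K_2$ and $(c+2)K_{c+2}$ for every $c$. Being an induced subgraph of $rK_2$, it is a disjoint union of $K_2$'s and $K_1$'s.
\begin{itemize}
\item If $H_2$ is edgeless, Case~3 holds after complementing. This is not complementation of the pair, though: $P_3$ is not complete. I would instead use the $(c+2)$-fold blowup of $C_4$ by stable sets, i.e.\ $K_{m,m}$ with $m=c+2$. It is $P_3$-containing, so it does not rule out $H_1$. This step is the main obstacle, and I expect the statement's Case~3 to require $H_1$ complete. So I would aim to show that $H_1=P_3$ forces $H_2$ into $\{$induced subgraphs of $2K_2\}$ only when compatible with Case~2, or else a contradiction.
\item For $H_2$ containing an edge plus other vertices: $H_2\in\{K_2+K_1, 2K_2,\dots\}$, and $2K_2$ fits Case~2 since $P_3\subseteq C_4$.
\item For $H_2=K_2+rK_1$ or $sK_2+\dots$ with $s\ge 3$, I would use a further construction: a split graph fails only locally. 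Take $(c+2)$ copies of $C_4$, which is $P_3$-containing, so that does not help. Instead take $(c+2)K_2$ plus many isolated vertices, which kills $2K_2$-free choices.
\end{itemize}
The symmetric argument with complements handles $H_1=K_2+K_1$. The careful finite case check against these constructions is where I expect the real work to lie.
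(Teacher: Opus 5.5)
Your sufficiency direction and your reduction to the chordal classification match the paper, as does your use of $(c+2)K_2$ to exclude $\{P_3,C_4\}$ and $\{P_4,C_4\}$. The necessity argument, however, has one false step and one unresolved case.

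The false step is in the sub-case where $H_1$ is an induced subgraph of $P_4$ and $H_2\cong C_4$: you assert that $(c+2)K_2$ contains no $K_2+K_1$. It does; take one edge and an endpoint of a second edge. So $H_1\cong K_2+K_1$ is not excluded, and it should not be. $K_2+K_1$ is an induced subgraph of $2K_2$, so this pair falls under Case~2. Indeed, $\{K_2+K_1,C_4\}$-free graphs are complete multipartite with at most one part of size at least $2$, hence split. The correct deduction, as in the paper, is that $H_1$ is an induced subgraph of both $P_4$ and $(c+2)K_2$, hence of $2K_2$, which gives Case~2.

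The more serious problem is that the sub-case $H_1\cong P_3$ is left open, and this is the heart of the necessity proof. (Also, $K_2+K_1$ is not an induced subgraph of $P_3$, so that sub-case does not arise here.) Your construction $(c+2)K_{c+2}$ contains $(c+2)K_2$ as an induced subgraph, so it adds no constraint on $H_2$ beyond $(c+2)K_2$. That is why you are left with $rK_1$, $K_2+rK_1$ and $sK_2+tK_1$. Your other attempts do not help either: $K_{m,m}$ contains $P_3$, and $(c+2)K_2$ plus isolated vertices contains every $sK_2+tK_1$ of bounded size.

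The missing idea is to take just two large cliques. The graph $2K_{c+2}$ is $P_3$-free and not $c$-apex-split, since any $c$ deletions leave at least two vertices in each clique and hence a $2K_2$. Every induced subgraph of $2K_{c+2}$ has at most two components. Combined with $(c+2)K_2$, this forces $H_2\cong sK_2+tK_1$ with $s+t\le 2$, i.e.\ $H_2$ is an induced subgraph of $2K_2$. That is Case~2, because $P_3$ is an induced subgraph of $C_4$. In particular, an edgeless $H_2$ must be $K_1$ or $2K_1$, so the difficulty you anticipate with Case~3 never arises.
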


\begin{proof}
    We first show that if $H_1, H_2$ satisfy one of the cases, then $\{H_1, H_2\}$-free graphs are bounded-apex-split.
    \begin{enumerate}
        \item Follows from \cref{thm:apex-split-H-free}. 
        \item If $H_1$ is an induced subgraph of $C_4$ and $H_2$ is an induced subgraph of $2K_2$, then every graph $G$ that is $\{H_1, H_2\}$-free is pseudo-split by  \cref{thm:PseudoSplit} and 1-apex-split by \cref{thm:2K2-C4-free-structure}.
        \item Once again, this is immediate from Ramsey's Theorem, \cref{thm:ramsey}.
    \end{enumerate}
        
    Conversely, suppose that the class of $\{H_1, H_2\}$-free graphs is $c$-apex-split for some $c \in \mathbb{N}$. 
    Since split graphs are a subclass of chordal graphs, it suffices to consider the four cases of \cref{thm:apex-chordal_characterization}:
    \begin{enumerate}
        \item If $H_1$ is a proper induced subgraph of $P_3$, it is an induced subgraph of $K_2$ or $2K_1$. This corresponds to Case 1 of \cref{thm:apexsplit}. If $H_1 \cong P_3$, then $H_1$-free graphs are exactly disjoint unions of complete graphs. Accordingly, $(c+2)K_2$ and $2K_{c+2}$ are both $H_1$-free but not $c$-apex-split. The graph $H_2$ must therefore be contained in both $(c+2)K_2$ and $2K_{c+2}$. We conclude that $H_2$ is an induced subgraph of $2K_2$, which corresponds to Case 2 of \cref{thm:apexsplit}.
            
        \item For the case where $H_1$ is an induced subgraph of $P_4$ and $H_2 \cong C_4$, note that the graph $(c+2)K_2$ is $H_2$-free but not $c$-apex-split. Thus, $H_1$ must be an induced subgraph of $(c+2)K_2$ as well as of $P_4$. We conclude that $H_1$ must be contained in $2K_2$. This corresponds to Case 2 of \cref{thm:apexsplit}, after switching the labels of $H_1$ and $H_2$.

        \item The case with $H_1 \cong 2K_2$ and $H_2 \cong C_4$ corresponds to Case 2 of \cref{thm:apexsplit}.
            
        \item The case with $H_1$ being a complete graph and $H_2$ being edgeless corresponds to Case 3 of \cref{thm:apexsplit}.
    \end{enumerate}
    This completes the forward direction of the proof. The result follows.
\end{proof}

\section{Bounded-apex-bipartite graph classes}
In this section, we answer \cref{question:mainquestion} for the class of bipartite graphs. We first consider classes defined by a single forbidden induced subgraph.

\begin{theorem} \label{thm:1bip}
    Let $H$ be a graph. The class of $H$-free graphs is bounded-apex-bipartite if and only if $H$ is an induced subgraph of $K_2$.
\end{theorem}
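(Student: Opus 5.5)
The reverse direction is immediate. If $H$ is an induced subgraph of $K_2$, then $H$ is $K_1$ or $K_2$. A $K_1$-free graph is empty, and a $K_2$-free graph is edgeless. Both are bipartite, so the class of $H$-free graphs is $0$-apex-bipartite.

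For the forward direction, suppose the class of $H$-free graphs is $c$-apex-bipartite for some $c\in\mathbb{N}$. I would use two constructions that are far from bipartite and force $H$ to lie inside each of them.

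\textbf{First construction.} Take $(c+1)K_3$. Deleting any $c$ vertices leaves an intact triangle, so this graph is not $c$-apex-bipartite. Hence it contains $H$, and $H$ is an induced subgraph of $rK_3$ for some $r$. So every component of $H$ is $K_1$, $K_2$ or $K_3$.

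\textbf{Second construction.} Take $K_{c+3}$. Deleting $c$ vertices leaves $K_3$, so this graph is not $c$-apex-bipartite either. Hence $H$ is complete.

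Combining the two, $H$ is complete and each of its components is a clique on at most three vertices, so $H$ is one of $K_1$, $K_2$, $K_3$. The remaining task is to rule out $H\cong K_3$, and this is the only step needing a little care. For it I would use the triangle-free graphs $(c+1)C_5$: deleting $c$ vertices leaves a $C_5$, so they are not $c$-apex-bipartite. They are also $K_3$-free, which contradicts the assumption when $H\cong K_3$.

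Alternatively, \cref{lem:LinForest} applied via bounded-apex-perfect (bipartite graphs are perfect) already excludes $K_3$, since $K_3$ is not a linear forest. Either way we conclude that $H$ is an induced subgraph of $K_2$.
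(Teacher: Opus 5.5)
Your proof is correct and follows the paper's argument: $K_{c+3}$ forces $H$ to be complete, and the triangle-free, non-$c$-apex-bipartite graph $(c+1)C_5$ then forces $H$ to be an induced subgraph of $K_2$. Your first construction, $(c+1)K_3$, is redundant, since any complete graph that is an induced subgraph of $(c+1)C_5$ is already $K_1$ or $K_2$.
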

\begin{proof}
    We first observe that if $H$ is an induced subgraph of $K_2$, each $H$-free graph is edgeless, and thus trivially bipartite. Hence it suffices to show the forward direction of the lemma statement.

     Let $c \in \mathbb{N}$ be given such that the class of $H$-free graphs is $c$-apex-bipartite. Let us first consider $K_{c+3}$. Deleting $c$ vertices leaves $K_3$, which is not bipartite; and hence $K_{c+3}$ is not $c$-apex-bipartite. It follows that $H$ is a complete graph. Since the graph $(c+1)C_5$ is not $c$-apex-bipartite, we conclude that $H$ is an induced subgraph of $K_2$, as desired.
\end{proof}

Next, we consider graph classes defined by two forbidden induced subgraphs.

\begin{theorem}
    Let $H_1$ and $H_2$ be graphs. The class of $\{H_1, H_2\}$-free graphs is bounded-apex-bipartite if and only if -- up to switching the labels of $H_1$ and $H_2$ -- one of the following holds:
    \begin{enumerate}
        \item $H_1$ is an induced subgraph of $K_2$; 
        \item $H_1 \cong K_3$ and $H_2$ is an induced subgraph of $P_3 + rK_1$ or $P_4$; or
        \item $H_1$ is a complete graph and $H_2$ is edgeless. 
    \end{enumerate}
\end{theorem}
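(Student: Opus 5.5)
We prove sufficiency by checking the three cases one at a time. For necessity, we use a few concrete non-$c$-apex-bipartite constructions to force the shape of $H_1$ and $H_2$.

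\textbf{Sufficiency.}
Case 1 is \cref{thm:1bip}.

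Case 3 follows from Ramsey's theorem, \cref{thm:ramsey}, exactly as in \cref{em:rK_1_(rK_1)c-free}. An $\{K_r, rK_1\}$-free graph has fewer than $R(r,r)$ vertices, so deleting all its vertices leaves a bipartite graph.

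Case 2 splits into two subcases.
\begin{itemize}
\item If $H_2$ is an induced subgraph of $P_3+rK_1$, the claim is exactly \cref{lem:K_3-P_3+rK_1-free-is-c-apex-perfect}. That lemma already gives bounded-apex-bipartite.
\item If $H_2$ is an induced subgraph of $P_4$, we show that every $\{K_3,P_4\}$-free graph $G$ is bipartite, so $c=0$ suffices. Suppose $G$ has an odd cycle, and take a shortest one. It is an induced cycle, since a chord would split it into a shorter odd cycle. It is not a triangle, so it has length at least $5$. An induced cycle of length at least $5$ contains an induced $P_4$, a contradiction.
\end{itemize}

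\textbf{Necessity: $H_1$ is a clique.}
Assume the class of $\{H_1,H_2\}$-free graphs is $c$-apex-bipartite. Deleting $c$ vertices from $K_{c+3}$ leaves a triangle, so $K_{c+3}$ is not $c$-apex-bipartite. Hence $K_{c+3}$ contains one of $H_1,H_2$, and after relabeling $H_1$ is a complete graph. If $H_1$ has at most two vertices, Case 1 holds. So we may assume from now on that $H_1$ contains $K_3$.

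\textbf{Necessity: reducing $H_2$.}
From now on, every $K_3$-free graph that is not $c$-apex-bipartite must contain $H_2$. We apply this twice.
\begin{itemize}
\item The graph $(c+1)C_5$ is $K_3$-free and not $c$-apex-bipartite. By \cref{lem:c5s}(ii), $H_2$ is a linear forest.
\item The stable-set blowup of $C_5$ from \cref{lem:blowupC5} (with $n=c+1$) is triangle-free. It contains $c+1$ disjoint copies of $C_5$, so it is not $c$-apex-bipartite. By \cref{lem:blowupC5}(ii), $H_2$ is an induced subgraph of $P_4$ or of $P_3+rK_1$.
\end{itemize}
If $H_1\cong K_3$, we are now in Case 2.

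\textbf{Necessity: the case $H_1\supseteq K_4$ (main step).}
The remaining work is to show that in this case $H_2$ is edgeless or an induced subgraph of $K_2$. We use two $K_4$-free graphs that are not $c$-apex-bipartite, since each contains $c+1$ vertex-disjoint triangles.
\begin{enumerate}
\item The complete tripartite graph $K_{c+1,c+1,c+1}$. Any induced linear forest in it either lies in one part, and so is edgeless, or meets at least two parts. In the second case, every further vertex is adjacent to something, which forces the forest to have at most three vertices and to be an induced subgraph of $P_3$. Hence $H_2$ is edgeless or an induced subgraph of $P_3$.
\item The disjoint union $(c+1)K_3$. It does not contain $P_3$. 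This rules out $H_2\cong P_3$, leaving $H_2$ edgeless or an induced subgraph of $K_2$ (such as $K_2$ or $K_2+K_1$ restricted appropriately).
\end{enumerate}
If $H_2$ is edgeless, Case 3 holds. If $H_2$ is an induced subgraph of $K_2$, Case 1 holds after swapping labels.

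The main obstacle is the case analysis in the last step: checking which linear forests embed in $K_{c+1,c+1,c+1}$, and making sure mixed graphs such as $K_2+K_1$ are excluded. If needed, we would add a further triangle-containing $K_4$-free construction to exclude $K_2+K_1$, for example a clique-blowup of $C_5$ modified as in \cref{lem:matchc5}.
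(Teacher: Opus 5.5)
Your plan follows the paper's proof almost step for step: the $K_{c+3}$, odd-cycle, stable-set-blowup, $K_{c+1,c+1,c+1}$ and $(c+1)K_3$ constructions are exactly the ones used there. Your shortest-odd-cycle argument for $\{K_3,P_4\}$-free graphs is a fine self-contained substitute for the paper's appeal to perfection.

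However, one inference is invalid as written. You conclude from $(c+1)C_5$ that ``by \cref{lem:c5s}(ii), $H_2$ is a linear forest''. Part (ii) of that lemma assumes $H$ is a linear forest and only says which linear forests occur. It does not assert that every induced subgraph of $(c+1)C_5$ is one. In fact $H_2\cong C_5$ is contained both in $(c+1)C_5$ and in the stable-set blowup of $C_5$, since each transversal $V^m$ induces $C_5$. So $C_5$ passes both of your tests, and \cref{lem:blowupC5}(ii) does not apply to it. Your argument would therefore place $\{K_3,C_5\}$ in Case 2, even though $(c+1)C_7$ is $\{K_3,C_5\}$-free and not $c$-apex-bipartite.

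The fix is the paper's: use $(c+1)C_\ell$ with $\ell$ odd and $\ell>\max(4,|V(H_2)|)$. This graph is triangle-free and not $c$-apex-bipartite, so it contains $H_2$. Every induced subgraph of it on fewer than $\ell$ vertices is a linear forest, so $H_2$ is one. Equivalently, apply \cref{lem:LinForest}: bipartite graphs are perfect, and $H_1$, which contains $K_3$, is not a linear forest.

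Your closing worry about $K_2+K_1$ is unfounded, and the backup you propose would not work. An induced linear forest of $K_{c+1,c+1,c+1}$ that meets two parts is a complete bipartite graph $K_{a,b}$ with $a,b\ge 1$. It is therefore $K_2$ or $P_3$, because $K_{2,2}\cong C_4$ and $K_{1,3}$ has a vertex of degree $3$. This, rather than ``every further vertex is adjacent to something'', is the right justification. It already excludes $K_2+K_1$, which is an induced subgraph of neither $P_3$ nor $K_2$; indeed, complete multipartite graphs are exactly the $(K_2+K_1)$-free graphs. So once $(c+1)K_3$ removes $P_3$, $H_2$ is edgeless (Case 3) or an induced subgraph of $K_2$ (Case 1), and no further construction is needed. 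The graphs of \cref{lem:matchc5} would be useless here anyway: each $V_i$ is a clique of size $n$, so for $n\ge 4$ they contain $K_4$ and need not be $H_1$-free.
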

\begin{proof}
We first show that if $H_1, H_2$ satisfy one of the cases, then $\{H_1, H_2\}$-free graphs are bounded-apex-bipartite. 
    \begin{enumerate}
        \item Follows from \cref{thm:1bip}. 
        \item Note that $P_4$-free graphs are perfect and hence $C_\ell$-free for all odd $\ell \geq 5$, so $\{K_3, P_4\}$-free graphs are bipartite. Likewise, $\{K_3, P_3+rK_1\}$-free graphs are bounded-apex-bipartite by \cref{lem:K_3-P_3+rK_1-free-is-c-apex-perfect}. 
        \item Once again, this is immediate from Ramsey's Theorem, \cref{thm:ramsey}.
    \end{enumerate}

    Now suppose that $\{H_1, H_2\}$-free graphs are $c$-apex-bipartite for some $c \in \mathbb{N}$. Since the class of complete graphs is not bounded-apex-bipartite, it follows that one of $H_1$ and $H_2$ is a complete graph; by symmetry, we assume that $H_1 \cong K_t$ for some $t \in \mathbb{N}$. If $t \leq 2$, then Case 1 holds.  

   Therefore, we  assume that $H_1$ contains $K_3$ as an induced subgraph. Since the graphs $(c+1)C_{2k+1}$ for all $c, k \in \mathbb{N}$ are $K_3$-free and not $c$-apex-bipartite, it follows that $H_2$ is an induced subgraph of each such graph, and hence $H_2$ is a linear forest. As the family $F_n$ from \cref{lem:blowupC5} is $K_3$-free and not $c$-apex-bipartite, it follows that $H_2$ is an induced subgraph of $P_4$ or $P_3 + rK_1$.

   Suppose first that $H_1 \cong K_3$. Then Case 2 holds. Thus we assume that $H_1$ contains $K_4$. Now consider the complete tripartite graph $K_{c+1, c+1, c+1}$, which is not $c$-apex-bipartite by virtue of containing $c+1$ disjoint copies of $K_3$, and hence, $K_{c+1, c+1, c+1}$ contains $H_2$. Since $H_2$ is $K_3$-free, it follows that $H_2$ is an induced subgraph of $K_{c+1, c+1}$. It follows that $H_2$ is one of: 
   \begin{itemize}
       \item $rK_1$ for some $r \in \mathbb{N}$ (so Case 3 holds); 
       \item an induced subgraph of $P_3$.
   \end{itemize}
   If $H_2$ is an induced subgraph of $P_3$ but not $P_3$ itself, then either $H_2$ is an induced subgraph of $K_2$ (and Case 1 holds) or $H_2 \cong 2K_1$ (and Case 2 holds). We conclude that $H_2 \cong P_3$. However, now the graph $(c+1)K_3$ is $\{H_1, H_2\}$-free and not $c$-apex-bipartite, a contradiction. 
\end{proof}

\section{Bounded-apex-complete-multipartite graph classes}
In this section, we answer \cref{question:mainquestion} for complete-multipartite graphs. We first consider classes defined by a single forbidden induced subgraph.

\begin{theorem} \label{thm:1mp}
    Let $H$ be a graph. The class of $H$-free graphs is bounded-apex-complete-multipartite if and only if $H$ is an induced subgraph of $K_2 + K_1$.
\end{theorem}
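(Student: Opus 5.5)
The plan is to argue the two directions separately, using the standard characterization that a graph is complete multipartite if and only if it is $(K_2+K_1)$-free. This holds because non-adjacency is then an equivalence relation: its classes are stable sets that are pairwise complete to one another. For the backward direction, if $H$ is an induced subgraph of $K_2+K_1$, then every $H$-free graph is $(K_2+K_1)$-free and hence complete multipartite. So the class is $0$-apex-complete-multipartite. (The degenerate cases $H \in \{K_1, K_2, 2K_1\}$ give, respectively, the empty graph, edgeless graphs and complete graphs, all of which are complete multipartite.)

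For the forward direction, suppose the class of $H$-free graphs is $c$-apex-complete-multipartite. The idea is to find two natural $H$-free-forcing constructions that are not $c$-apex-complete-multipartite, and whose common induced subgraphs are exactly the induced subgraphs of $K_2+K_1$.

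\begin{itemize}
\item[] \emph{First construction.} Take $(c+1)(K_2+K_1)$, i.e.\ $(c+1)K_2 + (c+1)K_1$. Deleting at most $c$ vertices still leaves an edge and a vertex outside it, which together induce $K_2+K_1$. So this graph is not $c$-apex-complete-multipartite, and it must contain $H$. Hence $H$ is an induced subgraph of $rK_2 + sK_1$, so every component of $H$ has at most two vertices.
\item[] \emph{Second construction.} Take the complete graph $K_{c+3}$ with a perfect-ish matching removed, or more simply $\overline{(c+2)K_2}$ together with something. The intent is to forbid $2K_2$ and $3K_1$. Instead, I would use $(c+1)K_2$ together with a disjoint clique. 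The cleanest choice seems to be two separate graphs: $(c+2)K_2$ and $K_{c+2}+K_{c+2}$.
\end{itemize}

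Checking these two: $(c+2)K_2$ is not $c$-apex-complete-multipartite, since removing $c$ vertices leaves two disjoint edges, which contain $K_2+K_1$. Its induced subgraphs are $rK_2+sK_1$ in which every isolated vertex comes from a distinct edge. Likewise $2K_{c+2}$ is not $c$-apex-complete-multipartite, since two cliques, each with at least two remaining vertices, contain $K_2+K_1$. Its induced subgraphs are disjoint unions of at most two cliques. So $H$ is an induced subgraph of $(c+1)(K_2+K_1)$ and of $2K_{c+2}$. Being a disjoint union of at most two cliques, each of size at most $2$, $H$ is one of $K_1$, $K_2$, $2K_1$, $K_2+K_1$, or $2K_2$. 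It remains to exclude $2K_2$. For this, take the complete multipartite-like graph $\overline{(c+2)K_2}$ plus an isolated vertex. That is, $K_{2,2,\dots,2}$ with $c+2$ parts, together with one isolated vertex $w$. Removing $c$ vertices either keeps $w$ together with some edge, which induces $K_2+K_1$, or removes $w$. In the latter case fewer than $c$ deletions remain for the other part, so it still has at least two parts, and that part is itself complete multipartite. That fails. Instead, use $2K_2$-freeness directly: the complement of $(c+2)K_2$ with an added isolated vertex has induced subgraphs with at most one edge outside a co-matching. I would instead use $K_{c+2}$ plus $(c+2)K_1$, i.e.\ a clique with many isolated vertices. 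This graph is $2K_2$-free, since it has only one nontrivial component, a clique. It is also not $c$-apex-complete-multipartite, since any $c$ deletions leave an edge of the clique and an isolated vertex. Hence $H \neq 2K_2$, which completes the argument.

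The main obstacle is choosing constructions that cut down the candidate list precisely. The final pair, $2K_{c+2}$ (or $(c+1)(K_2+K_1)$) together with $K_{c+2}+(c+2)K_1$, is the step that needs care. Each must be verified to be $H$-free for the appropriate $H$ while still containing $K_2+K_1$ robustly after $c$ deletions.
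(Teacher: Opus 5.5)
Your proof is correct and follows the same overall route as the paper. The graphs $2K_{c+2}$ and $(c+2)K_2$ (or your $(c+1)(K_2+K_1)$) force $H$ to be an induced subgraph of $2K_2$. One further $2K_2$-free graph that is not $c$-apex-complete-multipartite then rules out $2K_2$ itself.

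The only real difference is that last witness. The paper uses the stable-set blowup of $C_5$ with parts of size $c+1$. Your $K_{c+2}+(c+2)K_1$ is easier to check: its only nontrivial component is a clique, so it is $2K_2$-free, and any $c$ deletions leave an edge plus an isolated vertex. The paper itself uses this same graph later, in the proof of Theorem~\ref{thm:apex-complete-multipartite}.

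In a final write-up, delete the abandoned attempts: the ``perfect-ish matching'' idea and the $\overline{(c+2)K_2}+K_1$ detour. As you noticed, the latter is $1$-apex-complete-multipartite, so it proves nothing. These fragments obscure what is otherwise a clean three-construction argument.
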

\begin{proof}
    Note that the class of $(K_2 + K_1)$-free graphs is exactly the class of complete multipartite graphs. Hence, if $H$ is an induced subgraph of $K_2+K_1$, the class of $H$-free graphs is complete multipartite. It thus suffices to show the forward direction of the lemma statement.

    Let $c \in \mathbb{N}$ be given such that the class of $H$-free graphs is $c$-apex-complete-multipartite. Since the graph $2K_{c+2}$ is not $c$-apex-complete-multipartite, as removing at most $c$ vertices guarantees the existence of $K_2 + K_1$, it contains $H$. We thus find that $H$ is the disjoint union of two cliques. 
    Next, consider the graph $(c+2)K_2$. This graph is not $c$-apex-complete-multipartite, as there does not exist a set of $c$ vertices hitting all copies of $K_2 + K_1$, and thus it also contains $H$. It follows that $H$ is an induced subgraph of $2K_2$. Finally, the class of $2K_2$-free graphs itself is not $c$-apex-complete-multipartite, as it contains the stable-set blowup of $C_5$ where each vertex is replaced by a stable set of size $c+1$. Thus $H$ is a proper induced subgraph of $2K_2$ and thus an induced subgraph of $K_2 + K_1$.
\end{proof}

Next, we consider graph classes defined by two forbidden induced subgraphs.

\begin{theorem}\label{thm:apex-complete-multipartite}
Let $H_1, H_2$ be graphs. Then the class of $\{H_1, H_2\}$-free graphs is bounded-apex-complete-multipartite if and only if one of the following holds (up to switching the names of $H_1$ and $H_2$):  
\begin{enumerate}
    \item $H_1$ is an induced subgraph of $K_2 + K_1$; or
    \item $H_1$ is a complete graph and $H_2$ is edgeless. 
\end{enumerate}
\end{theorem}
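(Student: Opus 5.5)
We first prove sufficiency and then necessity. For sufficiency, Case~1 follows at once from \cref{thm:1mp}. Case~2 follows from \cref{thm:ramsey}: a $\{K_t, sK_1\}$-free graph has fewer than $R(t,s)$ vertices, so deleting all of them is allowed.

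For necessity, we assume the class of $\{H_1,H_2\}$-free graphs is $c$-apex-complete-multipartite. We also assume neither $H_i$ is an induced subgraph of $K_2+K_1$, and aim to show that one $H_i$ is complete and the other is edgeless. We use that a graph is complete multipartite if and only if it is $(K_2+K_1)$-free. Then each of the following graphs fails to be $c$-apex-complete-multipartite, because every deletion of at most $c$ vertices leaves an induced $K_2+K_1$. Hence each of them must contain $H_1$ or $H_2$.
\begin{enumerate}[(a)]
\item $A_c := 2K_{c+2}$. Its induced subgraphs are exactly the graphs $K_a+K_b$.
\item $B_c := (c+2)K_2$. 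Its induced subgraphs are the graphs $sK_2+tK_1$.
\item $C_c := K_{c+2}+(c+1)K_1$. Its induced subgraphs are the graphs $K_a+tK_1$.
\item $D_c$, the stable-set blowup $F_{c+1}$ of $C_5$ from \cref{lem:blowupC5}. It is $K_3$-free and $2K_2$-free, and its linear forests are induced subgraphs of $P_4$ or of $P_3+rK_1$.
\item $J_c$, the join of $c+1$ disjoint copies of $C_5$. It satisfies $\alpha(J_c)\le 2$. Since $C_5$ is $2K_2$-free and any induced $2K_2$ in a join must lie inside a single copy, $J_c$ is also $2K_2$-free. Every surviving copy of $C_5$ contains $K_2+K_1$.
\end{enumerate}

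The argument is a case analysis driven by (a) and (b). By (a), some $H_i$, say $H_1$, has the form $K_a+K_b$ with $a\ge b\ge 0$. By (b), some $H_i$ is of the form $sK_2+tK_1$. Suppose first that the same graph serves both roles. Excluding induced subgraphs of $K_2+K_1$, this forces $H_1\cong 2K_2$. Then $J_c$ is $H_1$-free, so $H_2$ is an induced subgraph of $J_c$. Next, $D_c$ is $2K_2$-free, so $D_c$ contains $H_2$, and thus $H_2$ is triangle-free with $\alpha\le 2$. Combining these constraints with $C_c$, we aim to show that $H_2$ is an induced subgraph of $K_2+K_1$, which is a contradiction. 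This triangle-free, $\alpha\le 2$ subcase is the one we expect to need some extra care, and we may need a further small construction there.

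Otherwise $H_1=K_a+K_b$ and $H_2=sK_2+tK_1$ are distinct. If $b\ge 2$, then $H_1$ contains $2K_2$, so $C_c$ and $J_c$ are both $H_1$-free. Consequently $H_2$ lies in both, so $H_2$ is $K_2+tK_1$ or $tK_1$ with $\alpha(H_2)\le 2$. Every such graph is an induced subgraph of $K_2+K_1$, a contradiction. So $b\le 1$, i.e.\ $H_1=K_a$ or $K_a+K_1$ with $a\ge 2$.
\begin{enumerate}[(1)]
\item If $H_2$ has an edge, then $D_c$ should contain both: $H_1$ forces $a\le 2$, giving $K_2+K_1$ (excluded), while $2K_2$-freeness together with (b) forces $H_2=K_2+tK_1$. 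In that case, if $H_1=K_a$ with $a\ge 3$, we use $J_c$ to rule out $t\ge 2$.
\item If $H_1=K_a+K_1$ with $a\ge 2$ and $H_2=tK_1$, a large clique together with an appropriate variant of $C_c$ should separate them.
\item The surviving configuration is $H_1$ complete and $H_2$ edgeless, as desired.
\end{enumerate}

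We expect the main obstacle to be the bookkeeping in this case analysis: checking, for each leftover pair, that one of the constructions (a)--(e) contains neither graph. The key verifications are the structural facts about $J_c$ (namely $\alpha\le 2$ and $2K_2$-freeness) and about $D_c$.
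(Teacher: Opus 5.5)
There is a genuine gap in step (1), in the subcase $H_1=K_a$ with $a\ge 3$ and $H_2=K_2+tK_1$ with $t\ge 2$, for instance $\{K_3,K_2+2K_1\}$. You propose to rule this out with $J_c$, but $J_c$ contains $K_a$: one edge from each of the $c+1$ copies of $C_5$ gives a clique of size $2c+2$. You cannot assume $c$ is small, since a $c$-apex-complete-multipartite class is also $c'$-apex-complete-multipartite for every $c'\ge c$. So nothing forces $J_c$ to contain $H_2$.

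In fact none of (a)--(e) separates this pair once $c\ge 1$: (a), (c) and (e) contain $K_3$, while (b) and (d) contain $K_2+2K_1$. What is missing is a triangle-free graph with no induced $K_2+2K_1$ that stays far from complete multipartite. The paper uses $K_{c+2,c+2}$ with a perfect matching $\{a_ib_i\}$ removed:
\begin{itemize}
\item it is bipartite, hence triangle-free;
\item for an edge $a_ib_j$, the only vertices non-adjacent to both ends are $a_j$ and $b_i$, and these are adjacent, so there is no induced $K_2+2K_1$;
\item after deleting at most $c$ of its $2c+4$ vertices, some pair $a_i,b_i$ survives together with a further vertex, which yields an induced $K_2+K_1$.
\end{itemize}
In the paper this graph is used exactly when one forbidden graph is $K_\ell+K_m$ with $\ell\ge 3$, and it forces the other to be a stable set.

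The remaining loose ends close with your own tools.
\begin{itemize}
\item Every vertex of $J_c$ has exactly two non-neighbours and $\alpha(J_c)\le 2$. So for $a\ge 3$, $J_c$ contains none of $K_a+K_1$, $K_2+2K_1$, $3K_1$. This settles the subcase $H_1=K_a+K_1$ of (1), which you do not address. It also settles case (2), where the vague ``large clique plus a variant of $C_c$'' is unnecessary.
\item The $2K_2$ subcase you flagged as delicate is already finished. $C_c$ forces $H_2\cong K_m+uK_1$, $D_c$ forces $m\le 2$, and $J_c$ forces $\alpha(H_2)\le 2$. Hence $H_2$ is an induced subgraph of $K_2+K_1$, which is essentially the paper's argument.
\item In (1), ``$D_c$ should contain both'' should read: $D_c$ cannot contain $H_1$ (it is triangle-free and $a\ge 3$), hence it contains $H_2$.
\item In the reduction to $b\le 1$, the bound $a\ge 2$ should be $a\ge 3$, since $K_2$ and $K_2+K_1$ are excluded.
\end{itemize}
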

\begin{proof}
    We first show that if $H_1, H_2$ satisfy one of the cases above, then the class of $\{H_1, H_2\}$-free graphs is bounded-apex-complete-multipartite: 
    \begin{enumerate}
        \item This follows from \cref{thm:1mp}. 
        \item Once again, this is immediate from Ramsey's Theorem, \cref{thm:ramsey}. 
    \end{enumerate}

    For the forward direction, suppose that the class of $\{H_1, H_2\}$-free graphs is $c$-apex-complete-multipartite. Since the graph $(c+2)K_2$ is not $c$-apex-complete-multipartite, we assume without loss of generality that it contains $H_1$, that is $H_1 \cong rK_2 + sK_1$ for some $r, s \in \mathbb{N}$.
    
    Consider the graph $2K_{c+2}$, which is not $c$-apex-complete-multipartite and thus contains $H_1$ or $H_2$. We distinguish these two cases. Suppose first that $H_1$ is an induced subgraph of $2K_{c+2}$. Then, either $H_1$ is an induced subgraph of $K_2 + K_1$, which corresponds to Case 1 of \cref{thm:apex-complete-multipartite}, or $H_1 \cong 2K_2$. 
    Now consider the stable-set blowup of $C_5$ where each vertex is replaced by a stable set of size $c+1$. Since this graph is $2K_2$-free and not $c$-apex-complete multipartite, it contains $H_2$, and hence $H_2$ is triangle-free. Next, consider $K_{c+2} + (c+2)K_1$. This graph is $2K_2$-free and not $c$-apex-complete-multipartite, so it contains $H_2$. Since $H_2$ is triangle-free, it follows that $H_2 \cong K_2 + tK_1$ for some $t \in \mathbb{N}$. Finally, we consider $\overline{(c+1)C_5}$, which is $2K_2$-free and has no three-vertex stable set, and is not $c$-apex-complete-multipartite. It follows that $H_2$ is an induced subgraph of $K_2 + K_1$, which corresponds to Case 1 of \cref{thm:apex-complete-multipartite}. 

    In the second case, $H_2$ is an induced subgraph of $2K_{c+2}$, and so $H_2 \cong K_{\ell} + K_m$ for some $\ell,m \in \mathbb{N}$ with $\ell \geq m$. 
    Since the (symmetric) case where $H_1$ is an induced subgraph of $2K_2$ is addressed above, we assume that $\ell \geq 3$. 
    Consider the graph $G$ obtained from $K_{c+2, c+2}$ by deleting a perfect matching. This graph is not $c$-apex-complete-multipartite, but it is $K_3$-free and hence $H_2$-free, and thus $G$ contains $H_1$. It follows that either $H_1 \cong sK_1$, or $H_1 \cong K_2 + K_1$ (which corresponds to Case 1), or $H_1 \cong 2K_2$ (which we already addressed). It remains to consider the case where $H_1 \cong sK_1$. We may assume that $H_1$ is not an induced subgraph of $K_2 + K_1$, and so $s \geq 3$. 

    If $H_2$ is a complete graph, then Case 2 holds, so we assume that $m \geq 1$. Now consider the non-$c$-apex-complete-multipartite graph $G' := \overline{(c+1)C_5}$ once again. Each vertex in this graph has at most 2 non-neighbours, so $G'$ is $H_2$-free. Also, since $(c+1)C_5$ is triangle-free, it follows that $G'$ is $H_1$-free. This is a contradiction, and concludes the proof.  
\end{proof}

\section{Open Questions}

We conclude the paper with open questions about bounded-apex classes. 

Though our paper addresses many subclasses of perfect graphs and related classes, we propose the further study of \cref{question:mainquestion} for other classes. Among the numerous hereditary graph classes worth considering, we would be most curious about weakly-chordal, comparability graphs, permutation graphs, and the class of line graphs of bipartite graphs. Another natural direction of further research is to return to \cref{question:broadquestion} and characterize when forbidding an arbitrary (possibly infinite) set of induced subgraphs renders a class bounded-apex-perfect (or bounded-apex-chordal, etc.). 

We now consider a generalization of the $c$-apex-$\mathcal{P}$ relation. Rather than removing a constant number of vertices, what happens if we remove a number of vertices determined by $\omega(G)$ or another graph parameter? Given a graph property $\mathcal{P}$, a function $f: \mathbb{R} \to \mathbb{N}$, and a graph parameter $a: \mathcal{G} \to \mathbb{R}$; we say that a graph class $\mathcal{F}$ is \emph{$f(a)$-apex-$\mathcal{P}$} if for all $G \in \mathcal{F}$, $G$ is $f(a(G))$-apex-$\mathcal{P}$. In the case where $f$ is the constant function $c$, this definition is equivalent to $\mathcal{F}$ being $c$-apex-$\mathcal{P}$. If such a function $f$ exists for a given parameter $a$, we say that $\mathcal{F}$ is \emph{$a$-bounded-apex-$\mathcal{P}$}. Let $\mathcal{G}$ be the set of all graphs. 

\begin{question}
    Given a property $\mathcal{P}$ and a graph parameter $a: \mathcal{G} \to \mathbb{R}$, for which sets of graphs $\mathcal{H}$ is the class of $\mathcal{H}$-free graphs $a$-bounded-apex-$\mathcal{P}$? 
\end{question}

We are particularly interested in this question for parameters $\omega(G)$ and $\chi(G)$ and for perfect graphs. Knowing whether a class of graphs is $\omega$-bounded-apex-perfect or $\chi$-bounded-apex-perfect, and if so for what sort of function $f$, provides a means of determining how close or far from perfect different hereditary classes are in a more local sense than measures like $\chi$-boundedness.


\printbibliography

@article{maffray1994linear,
  title={Linear recognition of pseudo-split graphs},
  author={Maffray, Fr{\'e}d{\'e}ric and Preissmann, Myriam},
  journal={Discrete Applied Mathematics},
  volume={52},
  number={3},
  pages={307},
  year={1994},
  publisher={Amsterdam: North Holland.},
  doi={10.1016/0166-218X(94)00022-0}
}

@incollection{grotschel1984polynomial,
  title={Polynomial algorithms for perfect graphs},
  author={Gr{\"o}tschel, Martin and Lov{\'a}sz, L{\'a}szl{\'o} and Schrijver, Alexander},
  booktitle={North-Holland mathematics studies},
  volume={88},
  pages={325--356},
  year={1984},
  publisher={Elsevier}
}

@article{berge1961farbung,
  title={F\"arbung von Graphen, deren s\"amtliche bzw. deren ungerade Kreise starr sind},
  author={Berge, Claude},
  journal={Wissenschaftliche Zeitschrift},
  year={1961},
  publisher={Martin Luther Universitat Halle-Wittenberg}
}

@article{lovasz1972characterization,
  title={A characterization of perfect graphs},
  author={Lov{\'a}sz, L{\'a}szl{\'o}},
  journal={Journal of Combinatorial Theory, Series B},
  volume={13},
  number={2},
  pages={95--98},
  year={1972},
  publisher={Elsevier},
  doi={10.1016/0095-8956(72)90045-7}
}

@incollection{ramsey1987problem,
  title={On a problem of formal logic},
  author={Ramsey, Frank P},
  booktitle={Classic Papers in Combinatorics},
  pages={1--24},
  year={1987},
  publisher={Springer},
  doi={10.1112/plms/s2-30.1.264},
}

@article{Lekkerkerker1962representation,
  title={Representation of a finite graph by a set of intervals on the real line},
  author={Lekkerkerker, Cornelis and Boland, Johan},
  journal={Fundamenta Mathematicae},
  volume={51},
  number={1},
  pages={45--64},
  year={1962}
}

@article{sau2023k,
  title={{$k$}-apices of minor-closed graph classes. {I}. {B}ounding the obstructions},
  author={Sau, Ignasi and Stamoulis, Giannos and Thilikos, Dimitrios M},
  journal={Journal of Combinatorial Theory, Series B},
  volume={161},
  pages={180--227},
  year={2023},
  publisher={Elsevier},
  doi={10.1016/j.jctb.2023.02.012}
}

@article{marx2012obtaining,
  title={Obtaining a planar graph by vertex deletion},
  author={Marx, D{\'a}niel and Schlotter, Ildik{\'o}},
  journal={Algorithmica},
  volume={62},
  number={3},
  pages={807--822},
  year={2012},
  publisher={Springer},
  doi={10.1007/s00453-010-9484-z},
}

@article{borowiecki2018p,
  title={{$\cal{P}$}-apex graphs},
  author={Borowiecki, Mieczys{\l}aw and Drgas-Burchardt, Ewa and Sidorowicz, El{\.z}bieta},
  journal={Discussiones Mathematicae Graph Theory},
  volume={38},
  number={2},
  pages={323--349},
  year={2018},
  doi={10.7151/dmgt.2041},
}

@article{singh2023apex,
  title={Apex graphs and cographs},
  author={Singh, Jagdeep and Sivaraman, Vaidy and Zaslavsky, Thomas},
  journal={Theory \& Applications of Graphs},
  volume={11},
  number={1},
  pages={4},
  year={2024},
  doi={10.20429/tag.2024.110104}
}

@article{brandstadt2002maximum,
  title={Maximum weight stable set on graphs without claw and co-claw (and similar graph classes) can be solved in linear time},
  author={Brandst{\"a}dt, Andreas and Mahfud, Suhail},
  journal={Information Processing Letters},
  volume={84},
  number={5},
  pages={251--259},
  year={2002},
  publisher={Elsevier},
  doi={10.1016/S0020-0190(02)00291-0},
}

@article{chudnovsky2006strong,
  title={The strong perfect graph theorem},
  author={Chudnovsky, Maria and Robertson, Neil and Seymour, Paul and Thomas, Robin},
  journal={Annals of mathematics},
  pages={51--229},
  year={2006},
  publisher={JSTOR},
  doi={10.4007/annals.2006.164.51}
}

@article{olariu1988pawfree,
    title = {Paw-free graphs},
    author = {Stephan Olariu},
    journal = {Information Processing Letters},
    volume = {28},
    number = {1},
    pages = {53-54},
    year = {1988},
    doi={10.1016/0020-0190(88)90143-3},
}

@article{BLAZSIK199351,
title = {Graphs with no induced {${C}_4$} and {$2{K}_2$}},
journal = {Discrete Mathematics},
volume = {115},
number = {1},
pages = {51-55},
year = {1993},
issn = {0012-365X},
doi = {10.1016/0012-365X(93)90477-B},
url = {https://doi.org/10.1016/0012-365X(93)90477-B},
author = {Zoltán Blázsik and Mihály Hujter and András Pluhár and Zsolt Tuza},
}

@article{split-graph-forbidden-induced-subgraphs,
    author = {Stéphane Foldes and Peter L. Hammer},
    title = {Split graphs},
    journal = {Congressus Numerantium},
    year = {1977},
    pages = {311--315},
    volume={No. XIX}
    
}

@article{trotignon2013perfect,
  title={Perfect graphs: a survey},
  author={Trotignon, Nicolas},
  journal={arXiv preprint arXiv:1301.5149},
  year={2013},
  doi={10.48550/arXiv.1301.5149}
}

@book{golumbic2004algorithmic,
  title={Algorithmic graph theory and perfect graphs},
  author={Golumbic, Martin Charles},
  volume={57},
  year={2004},
  publisher={Elsevier},
  doi={10.1016/C2013-0-10739-8},
}

@inbook{chordal, place={Cambridge}, series={Encyclopedia of Mathematics and its Applications}, title={Chordal graphs}, booktitle={Topics in Algorithmic Graph Theory}, publisher={Cambridge University Press}, author={Golumbic, Martin Charles}, editor={Beineke, Lowell W. and Golumbic, Martin Charles and Wilson, Robin J.}, year={2021}, pages={130–151}, collection={Encyclopedia of Mathematics and its Applications}}

@article{interval,
    title = {{Interval graphs}},
    year = {2004},
    journal = {Annals of Discrete Mathematics},
    author = {Golumbic, Martin Charles},
    number = {C},
    month = {1},
    pages = {171--202},
    volume = {57},
    publisher = {Elsevier},
    url = {https://www.sciencedirect.com/science/article/abs/pii/S0167506004800566},
    doi = {10.1016/S0167-5060(04)80056-6},
    issn = {0167-5060}
}

@book{bipartite, place={Cambridge}, series={Cambridge Tracts in Mathematics}, title={Bipartite Graphs and their Applications}, publisher={Cambridge University Press}, author={Asratian, Armen S. and Denley, Tristan M. J. and Häggkvist, Roland}, year={1998}, collection={Cambridge Tracts in Mathematics}}

@inbook{split, place={Cambridge}, series={Encyclopedia of Mathematics and its Applications}, title={Split graphs}, booktitle={Topics in Algorithmic Graph Theory}, publisher={Cambridge University Press}, author={Collins, Karen L. and Trenk, Ann N.}, editor={Beineke, Lowell W. and Golumbic, Martin Charles and Wilson, Robin J.Editors}, year={2021}, pages={189–206}, collection={Encyclopedia of Mathematics and its Applications}}

\end{document}